\documentclass[10pt,reqno]{amsart}
\usepackage{epic}
\usepackage{arydshln}
\usepackage{color}
\usepackage{tikz}
\usepackage{enumitem}
\usepackage{amsmath}
\usepackage{amssymb}
\usepackage{amsthm}

\usepackage{breqn}

\setlength{\textheight}{220mm} \setlength{\textwidth}{160mm}
\setlength{\oddsidemargin}{1.25mm}
\setlength{\evensidemargin}{1.25mm} \setlength{\topmargin}{0mm}

\setlength{\abovedisplayskip}{-15pt}
\setlength{\belowdisplayskip}{0pt}
\setlength{\abovedisplayshortskip}{0pt}
\setlength{\belowdisplayshortskip}{0pt}

\newtheorem{theorem}{Theorem}[section]
\newtheorem{lemma}[theorem]{Lemma}
\newtheorem{corollary}[theorem]{Corollary}

\newtheorem{conjecture}[theorem]{Conjecture}

\newtheorem{remark}[]{Remark}

\title{Andrews-Beck type Congruences related to the Crank of a Partition}
\author{Shreejit Bandyopadhyay}

\begin{document}

\maketitle

\footnotetext[1]{Penn State University, email-sxb437@psu.edu}

\begin{abstract}
    In this paper, we discuss a few recent conjectures made by George Beck related to the ranks and cranks of partitions. The conjectures for the rank of a partition were proved by Andrews by using results due to Atkin and Swinnerton-Dyer on a suitable generating function, while the conjectures related to cranks were studied by Shane Chern using weighted partition moments. We revisit the conjectures on the crank of a partition by decomposing the relevant generating function and further explore connections with Apple-Lerch series and tenth order mock theta functions.
\end{abstract}

\section{Introduction}
The partition of a positive integer $n$ is a weakly decreasing sequence of integers whose sum equals $n$, and is usually denoted by $p(n).$ For example, the number 4 has 5 partitions: 4, 3+1, 2+2, 2+1+1, 1+1+1+1 and as such, $p(4)=5.$ \\ \\
Among the identities satisfied by the partition function $p(n),$ we have the three celebrated congruences due to Ramanujan:
\begin{equation} 
p(5n+4)\equiv 0\pmod 5
\end{equation}
\begin{equation}
p(7n+5)\equiv 0\pmod 7
\end{equation}
\begin{equation}
p(11n+6)\equiv 0\pmod {11}
\end{equation} \\
Dyson defined the rank of a partition to be its largest part minus the number of parts and also conjectured the existence of a crank function that should combinatorially explain all of the above three congruences of Ramanujan. The actual crank function was discovered much later by Andrews and Garvan and is defined as follows. For a partition $\lambda$ of $n,$ if $l(\lambda)$ is the largest part, $\omega(\lambda)$ the number of ones and $\mu(\lambda)$ the number of parts in $\lambda$ bigger than $\omega(\lambda),$ then the crank of $\lambda$ is given by \\  \par\par $c(\lambda)=$
$\begin{cases}
l(\lambda) &\mbox{if } \omega(\lambda)=0 \\
\mu(\lambda)-\omega(\lambda) &\mbox{if } \omega(\lambda)>0
\end{cases}$\\ \\
In their paper [1], Atkin and Swinnerton-Dyer proved (1) and (2) by defining $N(m,k,n)$ to be the number of partitions of $n$ with rank congruent to $m$ modulo $k$ and then establishing that $N(i,5,5n+4)=\displaystyle\frac{1}{5}p(5n+4)$ for $0\leq i\leq 4$ and $N(i,7,7n+5)=\displaystyle\frac{1}{7}p(7n+5)$ for $0\leq i\leq 6.$ \\ \\
Instead of the function $N(m,k,n)$, the conjectures made by George Beck considered the number of parts in the partitions of $n$ with rank congruent to $m$ modulo $k,$ denoted by $NT(m,k,n)$ and the number of ones in the partitions of $n$ with crank congruent to $m$ modulo $k,$ denoted by $M_{\omega}(m,k,n).$ We now make a note of these results. 
\begin{theorem}
If $i=1,4$, then for $\forall n\ge 0,$ $NT(1,5,5n+i)-NT(4,5,5n+i)+2NT(2,5,5n+i)-2NT(3,5,5n+i) \equiv 0\pmod 5.$
\end{theorem}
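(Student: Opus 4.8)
The plan is to strip the combination of its dependence on the \emph{number of parts} and reduce it, via conjugation, to a statement about ordinary rank counts; this isolates all the analytic difficulty into a single rank-difference congruence. Write $N=5n+i$, let $NT(r,N)$ denote the total number of parts among the partitions of $N$ of rank exactly $r$ (so $NT(m,5,N)=\sum_{r\equiv m\,(5)}NT(r,N)$), let $N(r,N)$ be the number of partitions of $N$ of rank $r$, and let $LT(r,N)$ be the total of the largest parts over those same partitions. With weights $(c_0,c_1,c_2,c_3,c_4)=(0,1,2,-2,-1)$ the quantity in the theorem is $S(N)=\sum_{m=0}^{4}c_m\,NT(m,5,N)=\sum_{r}c_{r\bmod 5}\,NT(r,N)$. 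The decisive feature of these weights is that they are \emph{odd}: $c_{(-r)\bmod 5}=-c_{r\bmod 5}$.

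Next I would bring in conjugation. For a single partition of rank $r$ one has (number of parts) $-$ (largest part) $=-r$, so summing over all partitions of $N$ of rank $r$ gives $NT(r,N)-LT(r,N)=-r\,N(r,N)$. Since conjugation is a bijection on partitions of $N$ that negates the rank and interchanges the number of parts with the largest part, one has $LT(r,N)=NT(-r,N)$, whence $NT(r,N)+LT(r,N)$ is an \emph{even} function of $r$. Because $c_{r\bmod 5}$ is odd, this symmetric part contributes nothing, and
\[
S(N)=\tfrac12\sum_{r}c_{r\bmod 5}\bigl(NT(r,N)-LT(r,N)\bigr)=-\tfrac12\sum_{r} r\,c_{r\bmod 5}\,N(r,N).
\]

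Now I would pass to residues modulo $5$. A direct check gives $c_{r\bmod 5}\equiv r\pmod 5$ for every $r$ (indeed $0,1,2,-2,-1\equiv 0,1,2,3,4$), so $r\,c_{r\bmod 5}\equiv r^2$ and hence $2S(N)\equiv-\sum_r r^2 N(r,N)=-N_2(N)\pmod 5$, where $N_2$ is the second rank moment. As $2$ is invertible modulo $5$, the theorem becomes equivalent to $N_2(N)\equiv 0\pmod 5$ for $N\equiv 1,4\pmod 5$. Using $m^2\equiv(0,1,4,4,1)$ for $m=0,\dots,4$ together with the rank symmetry $N(m,5,N)=N(5-m,5,N)$, one computes $N_2(N)\equiv 2\bigl(N(1,5,N)-N(2,5,N)\bigr)\pmod 5$, so everything collapses to the single rank-difference congruence $N(1,5,N)\equiv N(2,5,N)\pmod 5$.

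It remains to establish this last congruence for $N\equiv 1,4\pmod 5$. When $N\equiv 4\pmod 5$ it is immediate from the Atkin--Swinnerton-Dyer equidistribution $N(i,5,5n+4)=\tfrac15 p(5n+4)$ quoted above, which forces $N(1,5,N)=N(2,5,N)$ outright. The case $N\equiv 1\pmod 5$ is the real obstacle: here there is no equidistribution, and I expect to need the finer Atkin--Swinnerton-Dyer $5$-dissection, which expresses $\sum_n\bigl(N(1,5,5n+1)-N(2,5,5n+1)\bigr)q^n$ as an explicit theta/eta-quotient, from whose closed form one reads off divisibility by $5$ of the coefficients. Carrying out that dissection — rather than the elementary conjugation and residue reductions above — is where the substantive work lies.
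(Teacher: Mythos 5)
Your reduction is carried out correctly as far as it goes: the weights $(0,1,2,-2,-1)$ are odd, conjugation gives $LT(r,N)=NT(-r,N)$ and $NT(r,N)-LT(r,N)=-r\,N(r,N)$, and since $c_{r\bmod 5}\equiv r\pmod 5$ you correctly obtain $2S(N)\equiv -N_2(N)\equiv -2\bigl(N(1,5,N)-N(2,5,N)\bigr)\pmod 5$. This is, in substance, the route of Chern [3], which the paper mentions but does not reproduce; the paper itself offers no proof of Theorem 1.1 at all, attributing it to Andrews [2], whose argument instead works analytically with the two-variable generating function (4) that tracks both the rank and the number of parts. So your approach is genuinely different from the one the paper points to: it trades Andrews' $q$-series manipulations for an elementary conjugation argument plus one input about rank counts.

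The genuine gap is the case $i=1$. You correctly dispose of $i=4$ via the Atkin--Swinnerton-Dyer equidistribution, but for $N\equiv 1\pmod 5$ you reduce everything to $N(1,5,5n+1)\equiv N(2,5,5n+1)\pmod 5$ and then stop, declaring the $5$-dissection to be "where the substantive work lies." As written, the theorem is therefore not proved for $i=1$. The gap is fillable with a citation rather than new work: Atkin and Swinnerton-Dyer [1] prove the rank differences $N(s,5,5n+1)-N(t,5,5n+1)$ explicitly, and among their identities is the exact equality $N(1,5,5n+1)=N(2,5,5n+1)$ (one can check it at $n=1$: both counts equal $2$ among the eleven partitions of $6$), which is much stronger than the congruence you need. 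Had you invoked that result — from the same source you already use for the $i=4$ case — the proof would be complete and notably more elementary than Andrews'. Without it, the hardest half of the statement remains unestablished.
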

\begin{theorem}
If $i=1,5$, then for $\forall n\ge 0,$ $NT(1,7,7n+i)-NT(6,7,7n+i)+NT(2,7,7n+i)-NT(5,7,7n+i)-NT(3,7,7n+i)+NT(4,7,7n+i)\equiv 0\pmod 7.$
\end{theorem}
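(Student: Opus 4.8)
The plan is to first convert this statement about numbers of parts into a statement about ranks, and then to feed the result into the Atkin--Swinnerton-Dyer analysis of the rank generating function at seventh roots of unity. For a partition $\lambda\vdash n$ write $r(\lambda)=\ell(\lambda)-\nu(\lambda)$ for its rank, where $\ell(\lambda)$ is the largest part and $\nu(\lambda)$ the number of parts, and set $S_m(n)=\sum_{\lambda\vdash n,\ r(\lambda)\equiv m\,(7)} r(\lambda)$, the sum of the ranks over the residue class $m$ modulo $7$. The key observation is conjugation: the conjugate $\lambda'$ satisfies $r(\lambda')=-r(\lambda)$, and the number of parts of $\lambda'$ equals the largest part of $\lambda$. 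Summing the identity $\nu(\lambda)=\ell(\lambda)-r(\lambda)$ over the class $r\equiv m$ and matching $\lambda\leftrightarrow\lambda'$ (which bijects the class $m$ with the class $-m$) yields $NT(m,7,n)-NT(-m,7,n)=-S_m(n)$. Applied to the three conjugate pairs in the theorem this gives the exact identity
\[
NT(1)-NT(6)+NT(2)-NT(5)-NT(3)+NT(4)=-\bigl(S_1(n)+S_2(n)-S_3(n)\bigr),
\]
where I have suppressed the common arguments $7$ and $7n+i$.

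Next I would reduce modulo $7$. Since every rank counted in $S_m(n)$ is congruent to $m$, we have $S_m(n)\equiv m\,N(m,7,n)\pmod 7$, where $N(m,7,n)$ is the Atkin--Swinnerton-Dyer rank count. Hence the left-hand side of the theorem is congruent mod $7$ to $-\bigl(N(1,7,n)+2N(2,7,n)-3N(3,7,n)\bigr)$. Because the weights satisfy $1+2-3=0$, this is a pure combination of rank differences, namely $-\bigl[(N(1,7,n)-N(3,7,n))+2(N(2,7,n)-N(3,7,n))\bigr]$; writing $A(n)=N(1,7,n)+2N(2,7,n)-3N(3,7,n)$, the goal becomes $A(7n+i)\equiv 0\pmod 7$.

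The case $i=5$ is then immediate: by the equidistribution quoted above, $N(m,7,7n+5)=\tfrac17 p(7n+5)$ is independent of $m$ (and is an integer by Ramanujan's congruence (2)), so every rank difference vanishes and $A(7n+5)=0$ identically. The choice of weights with $1+2-3=0$ is exactly what forces the surviving $p(n)$-term to cancel, so no genuine mod-$7$ estimate is needed in this case.

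The real work, and the step I expect to be the main obstacle, is the case $i=1$, where there is no congruence for $p(7n+1)$ and the rank is not equidistributed. For this I would invoke the explicit $7$-dissection of $R(\zeta,q)=\sum_{n\ge 0} q^{n^2}/\bigl((\zeta q;q)_n(\zeta^{-1}q;q)_n\bigr)$ at a primitive seventh root of unity $\zeta$ from [1], which expresses each rank difference $N(a,7,7n+1)-N(b,7,7n+1)$ as an explicit theta-type $q$-series. Substituting these into $(N(1)-N(3))+2(N(2)-N(3))$ along the progression $7n+1$ and verifying that every coefficient of the resulting series is divisible by $7$ would finish the proof. Equivalently, one may use the roots-of-unity filter $\sum_n A(n)q^n=\tfrac17\sum_{j=1}^{6}\bigl(\zeta^{-j}+2\zeta^{-2j}-3\zeta^{-3j}\bigr)R(\zeta^{j},q)$, where $7\mid A(7n+1)$ should follow by combining the ASD dissection with the factorization of $7$ in $\mathbb{Z}[\zeta]$; controlling this algebraic-integer divisibility, rather than the formal reductions above, is where I expect the genuine difficulty to lie.
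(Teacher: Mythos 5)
Your opening reduction is correct and is in fact the standard first move in this area (it is essentially how Chern [3] and Andrews [2] begin): conjugation gives $NT(m,7,n)-NT(7-m,7,n)=-\sum_{r(\lambda)\equiv m}r(\lambda)$, and reducing each rank to its residue gives that the left-hand side of the theorem is congruent to $-(N(1,7,n)+2N(2,7,n)-3N(3,7,n))\pmod 7$. Your disposal of the case $i=5$ via the Atkin--Swinnerton-Dyer equidistribution $N(j,7,7n+5)=\frac{1}{7}p(7n+5)$ is also complete and correct. Note, for the record, that the paper itself does not prove this theorem; it cites Andrews [2], whose argument runs through the two-variable generating function (4) and a limiting case of a very-well-poised $_6\phi_5$ rather than through conjugation plus the ASD dissection.

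The genuine gap is the case $i=1$, which you yourself flag as ``the real work'': you reduce the theorem to the claim that $(N(1,7,7n+1)-N(3,7,7n+1))+2(N(2,7,7n+1)-N(3,7,7n+1))\equiv 0\pmod 7$, but you do not prove this claim, only propose to ``substitute the ASD dissection and verify that every coefficient is divisible by $7$.'' That verification is not a formality: the individual rank differences $N(a,7,7n+1)-N(b,7,7n+1)$ are given by Atkin and Swinnerton-Dyer as nontrivial theta-quotient/Lambert-series expressions, and showing that this particular weighted combination has all coefficients divisible by $7$ is exactly the arithmetic content of the theorem in this case --- it is the analogue of what in this paper occupies all of Sections 4 and 5 for the crank (dissection into residue classes, Appell--Lerch manipulations, and identities such as (24)--(30)). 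Since there is no congruence for $p(7n+1)$ and no equidistribution to fall back on, nothing in your write-up actually establishes the divisibility; the argument as it stands proves only half of the theorem. To close the gap you would need either to carry out the ASD substitution explicitly and exhibit the factor of $7$, or to follow Andrews' route of expanding the derivative of (4) and extracting the $7n+1$ terms directly.
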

\begin{theorem}
For $\forall n\ge 0,$ $M_{\omega}(1,5,5n+4)+2M_{\omega}(2,5,5n+4)-2M_{\omega}(3,5,5n+4)-M_{\omega}(4,5,5n+4)\equiv 0\pmod 5.$
\end{theorem}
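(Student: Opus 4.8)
The plan is to work with the two-variable crank generating function refined by the number of ones. Set
\[
\mathcal{C}_\omega(z,q)=\sum_{\lambda}\omega(\lambda)\,z^{c(\lambda)}q^{|\lambda|},
\]
so that $M_\omega(m,n)=[z^mq^n]\mathcal{C}_\omega(z,q)$ and $M_\omega(m,5,N)$ collects these coefficients over $m$ in a fixed residue class modulo $5$. The first simplification is to notice that the weights $(w_0,w_1,w_2,w_3,w_4)=(0,1,2,-2,-1)$ attached to the residues $0,1,2,3,4$ are precisely the balanced residues modulo $5$ (indeed $3\equiv-2$ and $4\equiv-1$), so $w_m\equiv m\pmod 5$. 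Writing $N=5n+4$, the left-hand side therefore satisfies
\[
\sum_{m=0}^{4}w_m M_\omega(m,5,N)=\sum_{\lambda\vdash N}\omega(\lambda)\,w_{c(\lambda)\bmod 5}\equiv\sum_{\lambda\vdash N}\omega(\lambda)\,c(\lambda)\pmod 5,
\]
so it suffices to prove that $S(N):=\sum_{m=0}^{4}w_m M_\omega(m,5,N)$, equivalently the $(5n+4)$-coefficient of $D(q):=\partial_z\mathcal{C}_\omega(z,q)\big|_{z=1}$, is divisible by $5$.

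My next step is to obtain a usable form for $\mathcal{C}_\omega$. Since the weight $\omega(\lambda)$ annihilates every partition with no ones, only $\omega(\lambda)\ge 1$ contributes, and for such partitions the crank is unambiguously $\mu(\lambda)-\omega(\lambda)$; this avoids the small-$n$ corrections that complicate the plain crank generating function. Peeling off the $\omega$ ones and recording, for each remaining part, whether it exceeds $\omega$ yields
\[
\mathcal{C}_\omega(z,q)=\frac{1-q}{(zq;q)_\infty}\sum_{\omega\ge 1}\frac{\omega\,(q/z)^{\omega}\,(zq;q)_\omega}{(q;q)_\omega},
\]
a Lambert/$q$-hypergeometric series over a single infinite product.

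The core of the argument is the evaluation modulo $5$, for which I would use a roots-of-unity filter. With $\zeta=e^{2\pi i/5}$, orthogonality gives $S(N)$ as a fixed $\mathbb{Z}[\zeta]$-linear combination of the specializations $\mathcal{C}_\omega(\zeta^{j},q)$ for $j=1,2,3,4$ (the $j=0$ term drops out because $\sum_m w_m=0$). Putting these over the common denominator $\prod_{j=1}^{4}(\zeta^{j}q;q)_\infty=(q^5;q^5)_\infty/(q;q)_\infty$ turns the product part into a genuine eta-quotient, while the surviving numerator sums $\sum_{\omega\ge1}\omega(q/\zeta)^{\omega}(\zeta q;q)_\omega/(q;q)_\omega$ should be recognized as Appell--Lerch type series. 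I expect these to be expressible through the tenth-order mock theta functions advertised in the abstract; producing that decomposition is exactly the structural step the paper is organized around.

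Finally I would combine the classical $5$-dissection of the crank generating function (Garvan, refining Atkin--Swinnerton-Dyer) with the transformation behaviour of the Appell--Lerch pieces to isolate the progression $5n+4$ and read off the factor of $5$. I expect the last stage to be the main obstacle: the eta-quotient part already carries the Ramanujan-type $5n+4$ vanishing, but one must still show that the genuinely mock contribution adds nothing new to the $5n+4$ coefficients modulo $5$. Controlling that mock piece---through its explicit Appell--Lerch representation and its own $5$-dissection---is where the real difficulty lies, and it is precisely what brings the tenth-order mock theta functions into play.
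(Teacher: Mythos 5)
Your opening reduction is correct and is genuinely different from the paper's route: observing that the weights $(1,2,-2,-1)$ are the balanced residues modulo $5$, so that the alternating sum is congruent to the first $\omega$-weighted crank moment $\sum_{\lambda\vdash N}\omega(\lambda)c(\lambda)\pmod 5$, is exactly the moment-based strategy of Chern's paper (reference [3]), which this paper explicitly sets aside in favour of a direct dissection of $\partial_x\big|_{x=1}\,(xq)_\infty/(zq,xq/z;q)_\infty$ into crank residue classes. Your combinatorial generating function $\mathcal{C}_\omega(z,q)=\frac{1-q}{(zq;q)_\infty}\sum_{\omega\ge1}\frac{\omega(q/z)^\omega(zq;q)_\omega}{(q;q)_\omega}$ also checks out. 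The problem is that everything after that is a plan rather than a proof. The last two paragraphs consist of statements of the form ``should be recognized as,'' ``I expect these to be expressible,'' and ``I would combine,'' and you yourself identify the $5$-dissection of the resulting Appell--Lerch/Lambert-series piece as ``where the real difficulty lies.'' That dissection \emph{is} the theorem: once either your $D(q)$ or the paper's $\frac{1}{(q;q)_\infty}\sum_{n\ge1}(-1)^nq^{\binom{n+1}{2}}\frac{(1-q^n)^3(1+q^n)}{1-q^{5n}}$ is in hand, proving that its $q^{5n+4}$ coefficients vanish modulo $5$ requires exhibiting and verifying an explicit $5$-dissection, and no part of your write-up does this. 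There is also a loose end in your setup: paragraph one reduces the problem modulo $5$ to the coefficient of $D(q)=\partial_z\mathcal{C}_\omega|_{z=1}$, while paragraph three switches to an exact root-of-unity filter on $\mathcal{C}_\omega(\zeta^j,q)$; these are two different computations and you never commit to one.

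For calibration against the paper: it carries the analogous hard step much further but also does not finish it. It reduces Theorem 1.3 to Conjecture 4.1 (a $5$-dissection into Rogers--Ramanujan products $G$ and $H$ with no exponent $\equiv 4\pmod 5$), splits that conjecture into five Appell--Lerch identities (10)--(14), proves four of them via the ${}_6\phi_5$ summation and Lemma 11.3.4 of [5], and leaves (14) open with only equivalent reformulations. So the concrete content you are missing is precisely the analogue of equations (10)--(14) and their proofs: explicit theta-quotient evaluations of the dissected Lambert sums. Without producing and proving those identities (or invoking Chern's completed moment computation, which your first paragraph points toward but does not carry out), the argument does not establish the congruence.
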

\begin{theorem}
For $\forall n\ge 0,$ $M_{\omega}(1,7,7n+5)+2M_{\omega}(2,7,7n+5)+3M_{\omega}(3,7,7n+5)-3M_{\omega}(4,7,7n+5)-2M_{\omega}(5,7,7n+5)-M_{\omega}(6,7,7n+5)\equiv 0\pmod 7.$
\end{theorem}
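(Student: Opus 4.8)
The plan is to run the argument used for Theorem~3 with the prime $5$ replaced by $7$ and the residue $4$ by $5$. First I would record the bivariate generating function that simultaneously tracks the crank and the number of ones. Let $M_\omega(m,n)$ denote the number of ones summed over all partitions of $n$ whose crank equals $m$, so that $M_\omega(m,7,n)=\sum_{j\equiv m\,(7)}M_\omega(j,n)$; partitions with no ones carry weight $0$ and do not contribute. Splitting a partition into its $j\ge 1$ ones together with a partition into parts $\ge 2$, the definition of the crank ($c(\lambda)=\#\{\text{parts}>j\}-j$ when $\omega(\lambda)=j>0$) gives, writing $(a;q)_n=\prod_{i=0}^{n-1}(1-aq^i)$,
\begin{equation*}
F(z,q):=\sum_{n\ge 0}\sum_{m\in\mathbb{Z}}M_\omega(m,n)\,z^{m}q^{n}=\sum_{j\ge 1} j\,z^{-j}q^{j}\,\frac{1}{(q^2;q)_{j-1}}\,\frac{1}{(zq^{j+1};q)_\infty}.
\end{equation*}
The extra weight $j$ makes this a derivative-type Appell--Lerch series, which is exactly the object flagged in the abstract.

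Next I would turn the weighted difference in the statement into a root-of-unity evaluation of $F$. Let $\zeta=e^{2\pi i/7}$ and let $\widetilde m\in\{-3,\dots,3\}$ be the symmetric residue of $m$, so that the coefficients $1,2,3,-3,-2,-1$ are precisely $\widetilde 1,\dots,\widetilde 6$. Since $[q^{n}]F(\zeta^{t},q)=\sum_{m=0}^{6}\zeta^{tm}M_\omega(m,7,n)$ is the discrete Fourier transform of $m\mapsto M_\omega(m,7,n)$, orthogonality gives
\begin{equation*}
S(n):=\sum_{m=1}^{6}\widetilde m\,M_\omega(m,7,n)=\frac{1}{7}\sum_{t=0}^{6}c_t\,[q^{n}]F(\zeta^{t},q),\qquad c_t=\sum_{s=0}^{6}\widetilde s\,\zeta^{-ts}.
\end{equation*}
The symmetric residues sum to zero, so $c_0=0$ and no ``total number of ones'' term survives. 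Using $\sum_{s=0}^{6}s\,\zeta^{-ts}=7/(\zeta^{-t}-1)$ one gets $c_t=7\big(\tfrac{1}{\zeta^{-t}-1}-\sum_{s=4}^{6}\zeta^{-ts}\big)$ for $t\ne 0$, the factor $1/7$ cancels, and
\begin{equation*}
S(n)=\sum_{t=1}^{6}\Big(\frac{1}{\zeta^{-t}-1}-\sum_{s=4}^{6}\zeta^{-ts}\Big)[q^{n}]F(\zeta^{t},q).
\end{equation*}
Both sums on the right are permuted by $\mathrm{Gal}(\mathbb{Q}(\zeta)/\mathbb{Q})$, hence lie in $\mathbb{Q}$, consistent with $S(n)\in\mathbb{Z}$. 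Because the only rational prime below the prime ideal $\mathfrak p=(1-\zeta)$ is $7$, proving $S(7n+5)\equiv 0\pmod 7$ is equivalent to proving $S(7n+5)\in\mathfrak p$.

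The decisive step is therefore to control $[q^{7n+5}]F(\zeta^{t},q)$ inside $\mathbb{Z}[\zeta]$, and this is where I would decompose $F(\zeta^{t},q)$. Note $[q^{n}]F(\zeta^{t},q)=\sigma_t\big([q^{n}]F(\zeta,q)\big)$ for the automorphism $\sigma_t:\zeta\mapsto\zeta^{t}$, so it suffices to understand the single series $F(\zeta,q)$ and propagate by Galois. Each $1-\zeta^{-t}$ is an associate of $1-\zeta$, so the pole in $1/(\zeta^{-t}-1)$ raises by one the order of vanishing I must produce: after clearing it, $S(7n+5)\in\mathfrak p$ reduces to showing that the relevant Galois-stable combination of the $[q^{7n+5}]F(\zeta^{t},q)$ vanishes modulo $\mathfrak p^{2}$, with the remaining theta piece vanishing modulo $\mathfrak p$. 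Concretely I would substitute $z=\zeta$ into the series above, carry out the $7$-dissection of the resulting Appell--Lerch sum, and express its components through theta functions and the Appell--Lerch / mock theta functions already used for Theorem~3; the progression $7n+5$ then isolates a single dissection component whose coefficients can be read modulo $\mathfrak p$.

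I expect the main obstacle to be exactly this $7$-dissection of $F(\zeta,q)$. Unlike the ordinary crank generating function, the weight $j$ produces a sum that is genuinely mock rather than modular, so the dissection has to be performed through its mock-modular completion, and one must check that the non-holomorphic pieces cancel in the Galois average over $t=1,\dots,6$ while the surviving theta contributions supply the required vanishing to order $\mathfrak p^{2}$ along $7n+5$. Controlling these correction terms uniformly in $t$, rather than doing coefficient-by-coefficient bookkeeping, is the delicate part; everything preceding it is the same formal reduction that works for Theorem~3.
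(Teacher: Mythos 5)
Your reduction to root-of-unity evaluations is sound as far as it goes: the generating function $F(z,q)$ correctly encodes the crank-weighted count of ones, the discrete Fourier inversion with the symmetric residues $\widetilde m$ is the right way to package the coefficients $1,2,3,-3,-2,-1$, and the observation that $S(7n+5)\equiv 0\pmod 7$ is equivalent to $S(7n+5)\in\mathfrak p=(1-\zeta)$ is correct. This portion is essentially equivalent to what the paper does by more elementary means (Theorem 3.1 and Corollaries 3.2 and 3.4 reduce the claim to showing that
$\frac{1}{(q;q)_{\infty}}\sum_{n\geq 1}(-1)^nq^{\binom{n+1}{2}}(1-q^n)^5(1+q^n)/(1-q^{7n})$
has no terms $q^{7n+5}$ modulo $7$).

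The genuine gap is that everything after this reduction --- which is where the entire difficulty of the theorem lives --- is deferred rather than proved. You write that you would ``carry out the $7$-dissection of the resulting Appell--Lerch sum'' and ``check that the non-holomorphic pieces cancel,'' but you give neither the dissection nor any mechanism forcing the $q^{7n+5}$ component to vanish to the required order at $\mathfrak p$; the valuation bookkeeping (the pole of $1/(\zeta^{-t}-1)$ at $\mathfrak p$ versus $v_{\mathfrak p}(7)=6$) is asserted, not verified. This is precisely the step the paper itself cannot fully complete: it conjectures the explicit $7$-dissection (Conjecture 5.1), shows it would follow from seven theta/Appell--Lerch identities (Theorem 5.2), proves six of them via the ${}_6\phi_5$ summation and Lemma 11.3.4 of [5] (Theorems 5.4 and 5.6), and leaves equation (30) open, so that Theorem 1.4 is not actually established in the paper by this route. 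A proposal whose decisive step is ``perform the mock-modular $7$-dissection and confirm the cancellation'' therefore does not constitute a proof; to close the gap you would need, at minimum, the analogue of the paper's seven identities (or of its missing identity (30)) carried out explicitly, or an appeal to an independent proof such as Chern's in [3].
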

\begin{remark} 
As observed by Andrews in [2], Theorems 1.1 and 1.2 are trivial if we replace the function $NT$ by the function $N$ of Atkin and Swinnerton-Dyer since the rank function is symmetric: $N(m,k,n)=N(k-m,k,n).$ 
\end{remark}
For the rest of this paper, we adopt the standard q-series notations:\\ 
\par $(A;q)_n=\displaystyle\prod_{m=0}^{n-1}(1-Aq^m)$,  $(A;q)_{\infty}=\displaystyle\prod_{m=0}^{\infty}(1-Aq^m),(A_1,A_2,...A_r;q)_n=\displaystyle\prod_{t=1}^{r}(A_t;q)_n,\\ \\(A_1,A_2,...A_r;q)_{\infty}=\displaystyle\prod_{t=1}^{r}(A_t;q)_{\infty}.$
\section{Preliminaries}
Theorems 1.1 and 1.2 were proved by Andrews in [2] by considering the generating function which weights partitions by the number of parts while keeping track of the rank. That function is 
\begin{equation}
    \frac{\partial}{\partial x}\Bigr\rvert_{x = 1}\displaystyle\sum_{n\geq 0}\displaystyle\frac{x^nq^{n^2}}{(zq;q)_n(\displaystyle\frac{xq}{z};q)_n}
\end{equation}
where $q$ marks the number being partitioned, $z$ marks the rank and $x$ keeps track of the number of parts in the partition. As noted by Andrews in [2], this is a more complicated function compared to the universal mock-theta function $\displaystyle\sum_{n\geq 0}\displaystyle\frac{q^{n^2}}{(zq;q)_n(\displaystyle\frac{q}{z};q)_n}$ which counts the number of partitions with a certain rank, with $q$ marking the number being partitioned and $z$ counting the rank itself.\\ \\
Theorems 1.1-1.4 were independently considered by Shane Chern in [3] by relating them to the second rank and crank moments $N_2(n)$ and $M_2(n)$ respectively. \\ \\
In this paper, our goal will be to deal with Theorems 1.3 and 1.4 by considering, instead of the function in (4), the following function
\begin{equation}
    \displaystyle\frac{(xq)_{\infty}}{(zq,\displaystyle\frac{xq}{z};q)_{\infty}}
\end{equation}
where the variable $x$ keeps track of the number of ones in a partition, $q$ marks the number being partitioned and $z$ marks the crank of the partition. This function is somewhat simpler than the function in (4) used by Andrews in [2] to deal with Theorems 1.1 and 1.2. \\ \\
Our goal will be to decompose the function in (5) into powers of $q$ modulo 5 or modulo 7 and thereby reduce the proofs of Theorems 1.3 and 1.4 into proving certain equalities we obtain on comparing coefficients for different powers of $q.$
\section{Generating Functions}
We will use the following theorem to decompose the crank generating function in (5) into powers of $q$ modulo 5 or 7. 
\begin{theorem}
$\displaystyle\frac{(xq)_{\infty}}{(zq,\displaystyle\frac{xq}{z};q)_{\infty}}=\displaystyle\frac{1}{(q;q)_{\infty}}\bigg[1+\displaystyle\sum_{n\geq 1}(-1)^{n-1}\displaystyle\frac{(xq;q)_n}{(q;q)_{n-1}}q^{\binom{n+1}{2}}\bigg(\displaystyle\frac{1}{q^n(1-zq^n)}+\displaystyle\frac{x/z}{(1-\displaystyle\frac{xq^n}{z})}\bigg)\bigg].$
\end{theorem}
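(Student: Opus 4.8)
The plan is to regard both sides as meromorphic functions of the single complex variable $z$ (with $|q|<1$ fixed and $x$ a parameter) and to establish the identity by matching principal parts, i.e. by a partial--fraction (Mittag--Leffler) expansion in $z$. Writing $H(z)=\frac{(q;q)_\infty(xq;q)_\infty}{(zq;q)_\infty(\frac{xq}{z};q)_\infty}$, the assertion, after clearing the factor $\frac{1}{(q;q)_\infty}$, is an expansion of $H(z)$ as an explicit sum of simple--pole terms. Since $(zq;q)_\infty$ vanishes exactly at $z=q^{-n}$ and $(\frac{xq}{z};q)_\infty$ exactly at $z=xq^{n}$ for $n\ge 1$, the function $H$ has only simple poles, located precisely at the points appearing in the two summands $\frac{1}{q^{n}(1-zq^{n})}$ and $\frac{x/z}{1-xq^{n}/z}$ on the right.

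The first step is to compute the residues of $H$. I would exploit the symmetry $H(x/z)=H(z)$, which follows immediately from the substitution $z\mapsto x/z$ interchanging the factors $(zq;q)_\infty$ and $(\frac{xq}{z};q)_\infty$; this symmetry carries the pole at $z=q^{-n}$ to the pole at $z=xq^{n}$, so it suffices to treat one family and transport the result. Factoring the vanishing term $(1-zq^{n})$ out of $(zq;q)_\infty$ and using the evaluation $\prod_{k\ge 1,\,k\neq n}(1-q^{k-n})=(-1)^{n-1}q^{-\binom{n}{2}}(q;q)_{n-1}(q;q)_\infty$ together with $\frac{(xq;q)_\infty}{(xq^{n+1};q)_\infty}=(xq;q)_n$, a direct computation should yield $\operatorname{Res}_{z=q^{-n}}H=(-1)^{n}\frac{(xq;q)_n}{(q;q)_{n-1}}q^{\binom{n}{2}-n}$, which is exactly the residue at $z=q^{-n}$ of the claimed $n$-th summand $(-1)^{n-1}\frac{(xq;q)_n}{(q;q)_{n-1}}q^{\binom{n+1}{2}}\frac{1}{q^{n}(1-zq^{n})}$ (here $\binom{n+1}{2}-2n=\binom{n}{2}-n$). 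The symmetric computation, or the symmetry above, then handles the pole at $z=xq^{n}$ and the matching of the second summand $\frac{x/z}{1-xq^{n}/z}$.

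With the residues in hand, the delicate part is the expansion itself. Because the poles $q^{-n}$ accumulate at $\infty$ while the poles $xq^{n}$ accumulate at $0$, one cannot simply invoke Liouville on the whole punctured plane; instead I would apply Cauchy's theorem to $\frac{H(w)}{w-z}$ over an annular contour whose inner and outer circles are chosen to thread between consecutive poles of the two families, collect the residues inside, and show that the contributions of the two boundary circles tend to $0$ as the annulus expands. This growth estimate---bounding $H$ along circles passing between the two accumulating families of poles---is the step I expect to be the main obstacle and the one demanding the most care. Once it is established, the sum of principal parts reproduces $H(z)$ up to an additive constant, and I would fix that constant by comparing the coefficient of $q^{0}$ on the two sides, where the left-hand side receives a contribution only from the empty partition; this pins down the additive term and completes the proof.
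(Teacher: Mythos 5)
Your proposal is correct in outline but takes a genuinely different route from the paper. The paper never touches complex analysis: it specializes the very-well-poised ${}_6\phi_5$ summation (its equation (6)) at $a=x$, $b=x/z$, $c=z$, $d\to\infty$, splits the resulting summand by an algebraic partial-fraction identity in $z$, and disposes of the leftover sum by a second specialization of the same ${}_6\phi_5$. You instead read the identity as a Mittag--Leffler expansion of $H(z)=(q;q)_\infty(xq;q)_\infty/\big((zq;q)_\infty(xq/z;q)_\infty\big)$ in the crank variable. Your residue computation is right, and each summand on the right-hand side is exactly the canonical principal part $\operatorname{Res}_{p}H/(z-p)$; the growth estimate you single out as the main obstacle does go through, since on circles $|w|=q^{-N-1/2}$ one has $|(wq;q)_\infty|\geq C(1-q^{1/2})^{N}q^{-N^{2}/2}$, so $|H(w)|$ decays superexponentially and dominates the linear growth of the contour length (and symmetrically for the inner circle near $0$). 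What your route buys is a conceptual explanation of the summands as principal parts at the poles of the crank generating function, at the price of convergence and contour estimates; the paper's route is a purely formal $q$-series manipulation. One caution: your constant-fixing step, carried out correctly, yields additive constant $0$, not $1$. Indeed at $x=z=0$ the bracketed expression must equal $(q;q)_\infty$, and Euler's theorem already gives $\sum_{n\geq1}(-1)^{n-1}q^{\binom{n}{2}}/(q;q)_{n-1}=(q;q)_\infty$, leaving no room for the leading $1$; the paper's own derivation (and its Corollary 3.2) likewise produce the identity without that $1$, so the displayed statement contains a spurious constant term that your method would expose.
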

\begin{remark}
We note here the similarity of this result and Theorem 3 in [2].
\end{remark}
\begin{corollary}
If $M_{\omega}(b,k,n)$ is the number of ones in the partitions $\lambda$ of $n$ with crank congruent to $b$ modulo $k$, then 
\begin{equation*}
    \displaystyle\sum_{n\geq 0}M_{\omega}(b,k,n)q^n=\frac{\partial}{\partial x}\Bigr\rvert_{x = 1}\displaystyle\frac{1}{(q;q)_{\infty}}\displaystyle\sum_{n\geq 1}(-1)^{n-1}\displaystyle\frac{(xq;q)_n}{(q;q)_{n-1}}q^{\binom{n+1}{2}}\bigg(\displaystyle\frac{q^{n(b-1)}}{1-q^{nk}}+\displaystyle\frac{x^{k-b}q^{(k-b-1)n}}{1-x^kq^{kn}}\bigg).
\end{equation*}
\end{corollary}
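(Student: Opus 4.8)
The plan is to read off the corollary from Theorem 3.1 by extracting, from the crank generating function, the contribution of those partitions whose crank lies in a fixed residue class $b$ modulo $k$, and then to pass from merely tracking the number of ones to counting them by differentiating in $x$ at $x=1$.

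First I would recall the combinatorial meaning of (5): since $x$ marks the number of ones, $q$ the size, and $z$ the crank, we have
\[
\frac{(xq)_\infty}{(zq,\tfrac{xq}{z};q)_\infty}=\sum_\lambda x^{\omega(\lambda)}q^{|\lambda|}z^{c(\lambda)},
\]
the sum running over all partitions $\lambda$. Substituting the right-hand side of Theorem 3.1, it then suffices to collect the terms in which the exponent of $z$ is congruent to $b$ modulo $k$; assuming $1\le b\le k-1$ (the only range needed for Theorems 1.3 and 1.4), the leading constant $1$, which carries $z^0$, does not contribute.

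The key step is the geometric expansion of the two fractions inside the parentheses of Theorem 3.1 in powers of $z$. The first fraction gives
\[
\frac{1}{q^n(1-zq^n)}=\sum_{j\ge 0}z^{j}q^{n(j-1)},
\]
contributing only non-negative powers of $z$, while the second gives
\[
\frac{x/z}{1-\tfrac{xq^n}{z}}=\sum_{m\ge 1}x^{m}q^{n(m-1)}z^{-m},
\]
contributing only negative powers of $z$. To isolate crank $\equiv b\pmod k$ I would retain from the first expansion those $j$ with $j\equiv b\pmod k$, writing $j=b+kt$ with $t\ge 0$, and from the second those $m$ with $-m\equiv b$, i.e. $m\equiv k-b\pmod k$, writing $m=(k-b)+kt$ with $t\ge 0$. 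Summing the resulting geometric series in $t$ yields
\[
\sum_{t\ge 0}q^{n(b-1)}q^{nkt}=\frac{q^{n(b-1)}}{1-q^{nk}},\qquad \sum_{t\ge 0}x^{k-b}q^{n(k-b-1)}\bigl(x^kq^{nk}\bigr)^{t}=\frac{x^{k-b}q^{(k-b-1)n}}{1-x^kq^{nk}},
\]
which are exactly the two terms inside the parentheses of the corollary. This produces, still with $x$ present,
\[
\sum_{\substack{\lambda\\ c(\lambda)\equiv b\,(k)}}x^{\omega(\lambda)}q^{|\lambda|}=\frac{1}{(q;q)_\infty}\sum_{n\ge 1}(-1)^{n-1}\frac{(xq;q)_n}{(q;q)_{n-1}}q^{\binom{n+1}{2}}\biggl(\frac{q^{n(b-1)}}{1-q^{nk}}+\frac{x^{k-b}q^{(k-b-1)n}}{1-x^kq^{nk}}\biggr).
\]

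Finally I would apply $\tfrac{\partial}{\partial x}\big|_{x=1}$ to both sides. On the left, $\tfrac{\partial}{\partial x}x^{\omega(\lambda)}\big|_{x=1}=\omega(\lambda)$, so the coefficient of $q^n$ becomes $\sum_{\lambda\vdash n,\,c(\lambda)\equiv b}\omega(\lambda)=M_\omega(b,k,n)$, which is the left-hand side of the corollary; the right-hand side is left in the stated form. The main obstacle is the bookkeeping in the residue extraction: one must verify that the two geometric series cover precisely the intended class and nothing else (in particular that positive and negative cranks map to the $b$ and $k-b$ progressions respectively, and that the $z^0$ term is genuinely absent for $1\le b\le k-1$), and that interchanging the summations over $n$, $t$, and $\lambda$ is legitimate at the level of formal power series in $q$ and $x$. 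Once this is checked, the closed-form summation of the geometric series is routine.
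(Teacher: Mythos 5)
Your argument is exactly the paper's proof of Corollary 3.2: expand the two fractions of Theorem 3.1 as geometric series in $z$ and $z^{-1}$, extract the $z$-exponents congruent to $b$ modulo $k$, resum the geometric series in the step $t$, and apply $\tfrac{\partial}{\partial x}\big\rvert_{x=1}$ to convert $x^{\omega(\lambda)}$ into the count $\omega(\lambda)$. Your write-up is in fact more careful than the paper's (which leaves the residue extraction and the vanishing of the constant term implicit), and the bookkeeping you flag checks out.
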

\begin{corollary}
\begin{equation*}
\begin{split}
\displaystyle\sum_{n\geq 0}\bigg[M_{\omega}(1,5,n)+2M_{\omega}(2,5,n)-2M_{\omega}(3,5,n)-M_{\omega}(4,5,n)\bigg]q^n\equiv \\ \displaystyle\frac{1}{(q;q)_{\infty}}\displaystyle\sum_{n\geq 1}(-1)^nq^{\binom{n+1}{2}}\displaystyle\frac{(1-q^n)^3(1+q^n)}{(1-q^{5n})}\pmod 5.
\end{split}
\end{equation*}
\end{corollary}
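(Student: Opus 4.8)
The plan is to invoke Corollary 3.2 with $k=5$ separately for each residue $b=1,2,3,4$ and take the linear combination with weights $(c_1,c_2,c_3,c_4)=(1,2,-2,-1)$. Since this combination is finite and all expressions are formal power series in $q$ (and $x$), linearity lets me pull the weights and the operator $\frac{\partial}{\partial x}\big|_{x=1}$ through the sum over $b$, so that the left-hand side of the stated congruence equals
\[
\frac{\partial}{\partial x}\Big|_{x=1}\frac{1}{(q;q)_{\infty}}\sum_{n\geq 1}(-1)^{n-1}\frac{(xq;q)_n}{(q;q)_{n-1}}q^{\binom{n+1}{2}}\bigl[A_n+B_n(x)\bigr],
\]
where $A_n=\sum_{b=1}^{4}c_b\dfrac{q^{n(b-1)}}{1-q^{5n}}$ carries no $x$ and $B_n(x)=\sum_{b=1}^{4}c_b\dfrac{x^{5-b}q^{(4-b)n}}{1-x^5q^{5n}}$. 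First I would write out the numerators: $A_n$ has numerator $1+2q^n-2q^{2n}-q^{3n}$, while the numerator of $B_n(1)$ is $q^{3n}+2q^{2n}-2q^n-1$, both over $1-q^{5n}$.

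The key observation is that these two numerators are exact negatives, so $A_n+B_n(1)=0$ identically. Writing $F_n(x)=(xq;q)_n/(q;q)_{n-1}$, the product rule then collapses to
\[
\frac{\partial}{\partial x}\Big|_{x=1}\bigl[F_n(x)\,(A_n+B_n(x))\bigr]=F_n'(1)\,\bigl(A_n+B_n(1)\bigr)+F_n(1)\,B_n'(1)=F_n(1)\,B_n'(1),
\]
so the unpleasant factor $F_n'(1)$, which would otherwise involve $\sum_{j=1}^{n}q^j/(1-q^j)$, disappears entirely. This cancellation is the structural heart of the argument and the step I expect to be least obvious in advance; without it one is left differentiating the $q$-Pochhammer factor and the calculation does not close up cleanly.

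It then remains to evaluate $F_n(1)B_n'(1)$ modulo $5$. With $N(x)=x^4q^{3n}+2x^3q^{2n}-2x^2q^n-x$ and $D(x)=1-x^5q^{5n}$, the quotient rule gives $B_n'(1)=\bigl(N'(1)D(1)-N(1)D'(1)\bigr)/D(1)^2$; since $D'(1)=-5q^{5n}\equiv 0\pmod 5$, this reduces modulo $5$ to $B_n'(1)\equiv N'(1)/D(1)=(4q^{3n}+6q^{2n}-4q^n-1)/(1-q^{5n})$. Reducing coefficients mod $5$ and factoring gives $4q^{3n}+6q^{2n}-4q^n-1\equiv -(1-q^n)^2(1+q^n)$, and multiplying by $F_n(1)=1-q^n$ yields $F_n(1)B_n'(1)\equiv -(1-q^n)^3(1+q^n)/(1-q^{5n})\pmod 5$. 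Absorbing the sign into $(-1)^{n-1}$ to obtain $(-1)^n$ produces exactly the claimed right-hand side. Beyond the cancellation above, the remaining point to handle carefully is the routine justification that interchanging the finite weighted sum, the summation over $n$, and $\partial/\partial x$ at $x=1$ is legitimate in the ring of formal power series, and that the asserted congruence is to be read coefficientwise.
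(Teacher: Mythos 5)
Your proposal is correct and follows essentially the same route as the paper's proof: both specialize Corollary 3.2 at $k=5$ with weights $(1,2,-2,-1)$, exploit the fact that the bracketed combination vanishes at $x=1$ (the paper factors out $(1-x)$ and uses $\frac{\partial}{\partial x}\big\rvert_{x=1}(1-x)F=-F(1)$, which is the same cancellation you obtain via the product rule) so that only the derivative of the $x$-dependent part survives, and then reduce the resulting numerator modulo $5$. Your early use of $D'(1)=-5q^{5n}\equiv 0\pmod 5$ is a mild streamlining of the paper's exact computation, but the argument is the same.
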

\begin{corollary}
\begin{equation*}
\begin{split}
\displaystyle\sum_{n\geq 0}\bigg[M_{\omega}(1,7,n)+2M_{\omega}(2,7,n)-+3M_{\omega}(3,7,n)-3M_{\omega}(4,7,n)-2M_{\omega}(5,7,n)-M_{\omega}(6,7,n)\bigg]q^n\equiv \\ \displaystyle\frac{1}{(q;q)_{\infty}}\displaystyle\sum_{n\geq 1}(-1)^nq^{\binom{n+1}{2}}\displaystyle\frac{(1-q^n)^5(1+q^n)}{(1-q^{7n})}\pmod 7.
\end{split}
\end{equation*}
\end{corollary}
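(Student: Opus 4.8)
The plan is to apply Corollary 3.2 with $k=7$ and the weights $(c_1,\dots,c_6)=(1,2,3,-3,-2,-1)$, mirroring the mod $5$ computation behind Corollary 3.3. By linearity of $\frac{\partial}{\partial x}\big|_{x=1}$, of division by $(q;q)_\infty$, and of the sum over $n$, the left-hand side is the $x$-derivative at $x=1$ of
\[
\frac{1}{(q;q)_\infty}\sum_{n\ge1}(-1)^{n-1}\frac{(xq;q)_n}{(q;q)_{n-1}}q^{\binom{n+1}{2}}S_n(x),\qquad S_n(x)=\sum_{b=1}^{6}c_b\left(\frac{q^{n(b-1)}}{1-q^{7n}}+\frac{x^{7-b}q^{(6-b)n}}{1-x^7q^{7n}}\right).
\]
First I would differentiate each summand by the product rule, splitting it into a term carrying $\frac{\partial}{\partial x}(xq;q)_n$ and a term carrying $S_n'(x)$.

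The key structural observation is that the first of these terms disappears at $x=1$. Setting $x=1$ collapses the two geometric pieces, and the chosen weights make $\sum_{b=1}^6 c_b\big(q^{n(b-1)}+q^{(6-b)n}\big)=0$: the sequence $(1,2,3,-3,-2,-1)$ satisfies $c_{7-b}=-c_b$, so the substitution $b\mapsto 7-b$ sends the second sum to the negative of the first. Hence $S_n(1)=0$ and the troublesome factor $\frac{\partial}{\partial x}(xq;q)_n$ never contributes. This is the heart of why the particular linear combination in Theorem 1.4 is the right one, and it is the step I expect to be the main point to get right; everything that survives is $\frac{(q;q)_n}{(q;q)_{n-1}}S_n'(1)=(1-q^n)S_n'(1)$.

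Next I would compute $S_n'(1)$. Only the $x$-dependent pieces survive, and differentiating $\dfrac{x^{7-b}}{1-x^7q^{7n}}$ and evaluating at $x=1$ gives $\dfrac{(7-b)+bq^{7n}}{(1-q^{7n})^2}$, so that $S_n'(1)=\sum_{b=1}^6 c_b q^{(6-b)n}\dfrac{(7-b)+bq^{7n}}{(1-q^{7n})^2}$. Reducing modulo $7$ I would use $7-b\equiv -b$ to write $(7-b)+bq^{7n}\equiv b(q^{7n}-1)$, and then collapse one power via $\dfrac{q^{7n}-1}{(1-q^{7n})^2}=\dfrac{-1}{1-q^{7n}}$, leaving $S_n'(1)\equiv -\dfrac{1}{1-q^{7n}}\sum_{b=1}^6 bc_b\,q^{(6-b)n}\pmod 7$.

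Finally, the proof reduces to the finite polynomial identity (in $u=q^n$)
\[
\sum_{b=1}^{6}bc_b\,u^{6-b}=1+4u+2u^2+2u^3+4u^4+u^5\equiv(1-u)^4(1+u)\pmod 7,
\]
an elementary check after reducing each $bc_b$ modulo $7$. Multiplying by the surviving $(1-q^n)$ yields $(1-q^n)^5(1+q^n)$, and absorbing the sign $(-1)^{n-1}\cdot(-1)=(-1)^n$ produces exactly the claimed right-hand side. The only genuine obstacle is the vanishing of $S_n(1)$; everything after it is routine differentiation and a congruence check, and the argument is word-for-word the one that gives Corollary 3.3 with $5$ replaced by $7$ and the exponent $3$ replaced by $5$.
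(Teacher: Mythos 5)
Your proposal is correct and follows essentially the same route as the paper: both apply Corollary 3.2 with the weights $(1,2,3,-3,-4,\dots)$ read off from Theorem 1.4, exploit the fact that the chosen linear combination vanishes at $x=1$ (the paper phrases this as factoring out $(1-x)$ and using $\frac{\partial}{\partial x}\big|_{x=1}(1-x)F=-F(1)$, which is the same as your observation that $S_n(1)=0$ kills the $\frac{\partial}{\partial x}(xq;q)_n$ term and leaves $(1-q^n)S_n'(1)$), and then finish with the same polynomial congruence $1+4u+2u^2+2u^3+4u^4+u^5\equiv(1-u)^4(1+u)\pmod 7$. Your only deviation is reducing modulo $7$ slightly earlier in the computation, which is a harmless shortcut.
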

\vspace{1cm}
\begin{proof}[Proof of Theorem 3.1]
We have the following identity
\begin{equation}
    ^6\phi_5\Bigg[\begin{array}{c}
             a, q\sqrt a,-q\sqrt a,b,c,d \\
             \sqrt a,-\sqrt a,\displaystyle\frac{qa}{b},\displaystyle\frac{qa}{c},\displaystyle\frac{qa}{d}
    \end{array}
;q,\displaystyle\frac{qa}{bcd}\Bigg]=\displaystyle\frac{(aq,\displaystyle\frac{aq}{bc},\displaystyle\frac{aq}{bd},\displaystyle\frac{aq}{cd};q)_{\infty}}{(\displaystyle\frac{aq}{b},\displaystyle\frac{aq}{c},\displaystyle\frac{aq}{d},\displaystyle\frac{aq}{bcd};q)_{\infty}}
\end{equation}
Set $a=x,b=\displaystyle\frac{x}{z},c=z,d\rightarrow\infty$ in (6) whence the right side reduces to $\displaystyle\frac{(xq,q;q)_{\infty}}{(zq,\displaystyle\frac{xq}{z};q)_{\infty}}.$ Under the same substitution, the left side becomes
\begin{equation}
\displaystyle\sum_{n\geq 0}(-1)^n\displaystyle\frac{(x;q)_n(1-xq^{2n})}{(1-x)(q;q)_n}\displaystyle\frac{(1-\displaystyle\frac{x}{z})(1-z)}{(1-zq^n)(1-\displaystyle\frac{x}{z}q^n)}q^{\binom{n+1}{2}}.
\end{equation}
Next we replace $\displaystyle\frac{(1-z)(1-\displaystyle\frac{x}{z})(1-xq^{2n})}{(1-zq^n)(1-\displaystyle\frac{x}{z}q^n)}$ by $-(1-q^n)(1-xq^n)\big(\displaystyle\frac{1}{q^n(1-zq^n)}+\displaystyle\frac{x/z}{(1-\displaystyle\frac{xq^n}{z})}\big)+\big(\displaystyle\frac{1-xq^{2n}}{q^n}\big),$
so that (7) becomes 
\begin{equation}
\begin{split}
    \displaystyle\sum_{n\geq 0}(-1)^{n-1}\displaystyle\frac{(x;q)_n}{(1-x)(q;q)_n}q^{\binom{n+1}{2}}\big(\displaystyle\frac{1}{q^n(1-zq^n)}+\displaystyle\frac{x/z}{(1-\displaystyle\frac{xq^n}{z})}\big)(1-xq^n)(1-q^n)\\
    +\displaystyle\sum_{n\geq 0}(-1)^{n}\displaystyle\frac{(x;q)_n}{(1-x)(q;q)_n}q^{\binom{n+1}{2}}\displaystyle\frac{(1-xq^{2n})}{q^n}.
\end{split}
\end{equation}
We next note that the second sum in (8) vanishes. Indeed, if we set $a=x, bc=qx, d\rightarrow\infty$ in (6), we get that 
\begin{equation*}
    \displaystyle\sum_{n\geq 0}(-1)^{n}\displaystyle\frac{(x;q)_n}{(1-x)(q;q)_n}q^{\binom{n-1}{2}}(1-xq^{2n})=\displaystyle\frac{(xq,1;q)_{\infty}}{(\displaystyle\frac{xq}{b},b;q)_{\infty}}=0.
\end{equation*}
Thus, by (6), (7) and (8), we have that
\begin{equation*}
    \displaystyle\sum_{n\geq 0}(-1)^{n-1}\displaystyle\frac{(x;q)_n}{(1-x)(q;q)_n}q^{\binom{n+1}{2}}(1-xq^n)(1-q^n)\big(\displaystyle\frac{1}{q^n(1-zq^n)}+\displaystyle\frac{x/z}{(1-\displaystyle\frac{xq^n}{z})}\big)=\displaystyle\frac{(xq,q;q)_{\infty}}{(zq,\displaystyle\frac{xq}{z};q)_{\infty}}.
\end{equation*}
Theorem 3.1 now follows on dividing both sides by $(q;q)_{\infty}.$
\end{proof}
\begin{proof}[Proof of Corollary 3.2] 
We write $\displaystyle\frac{1}{q^n(1-zq^n)}=q^{-n}(1+zq^n+z^2q^{2n}+...)=\displaystyle\sum_{j\geq 0}z^jq^{n(j-1)}$ and $\displaystyle\frac{x/z}{(1-\displaystyle\frac{xq^n}{z})}=\displaystyle\sum_{j\geq 0}x^{j+1}z^{-j-1}q^{nj}.$ \\ \\
Taking exponents of the form $kt+b$ for $z$ (since we count the partitions with crank congruent to $b$ modulo $k),$ we thus get that 
\begin{equation*}
    \displaystyle\sum_{n\geq 0}M_{\omega}(b,k,n)q^n=\frac{\partial}{\partial x}\Bigr\rvert_{x = 1}\displaystyle\frac{1}{(q;q)_{\infty}}\displaystyle\sum_{n\geq 1}(-1)^{n-1}q^{\binom{n+1}{2}}\displaystyle\frac{(xq;q)_n}{(q;q)_n}\big[\displaystyle\frac{q^{n(b-1)}}{1-q^{nk}}+\displaystyle\frac{x^{k-b}q^{(k-1-b)n}}{1-x^kq^{nk}}\big]
\end{equation*}
which is Corollary 3.2.
\end{proof}
\vspace{1cm}
\begin{proof}[Proof of Corollary 3.3] Set $k=5, b=1,2,3,4$ in Corollary 3.2 to obtain 
\begin{multline*}
        \displaystyle\sum_{n\geq 0}\big[M_{\omega}(1,5,n)+2M_{\omega}(2,5,n)-2M_{\omega}(3,5,n)-M_{\omega}(4,5,n)\big]q^n \\ \\
        =\displaystyle\frac{\partial}{\partial x}\Bigr\rvert_{x = 1}\displaystyle\frac{1}{(q;q)_{\infty}}\displaystyle\sum_{n\geq 1}(-1)^{n-1}\displaystyle\frac{(xq;q)_n}{(q;q)_{n-1}}q^{\binom{n+1}{2}} 
\end{multline*}
\begin{multline*}
        \times\Bigg[\displaystyle\frac{1}{1-q^{5n}}+\displaystyle\frac{x^4q^{3n}}{1-x^5q^{5n}}+\displaystyle\frac{2q^n}{1-q^{5n}}+\displaystyle\frac{2x^3q^{2n}}{1-x^5q^{5n}}-\displaystyle\frac{2q^{2n}}{1-q^{5n}}-\displaystyle\frac{2x^2q^n}{1-x^5q^{5n}}-\displaystyle\frac{q^{3n}}{1-q^{5n}}-\displaystyle\frac{x}{1-x^5q^{5n}}\Bigg] \\ \\
        =\displaystyle\frac{\partial}{\partial x}\Bigr\rvert_{x = 1}\displaystyle\frac{1}{(q;q)_{\infty}}\displaystyle\sum_{n\geq 1}(-1)^{n-1}\displaystyle\frac{(xq;q)_n}{(q;q)_{n-1}}q^{\binom{n+1}{2}}\times \displaystyle\frac{(1-x)}{(1-q^{5n})(1-x^5q^{5n})}\Bigg[1+2q^n(1+x)-2q^{2n}(1+x+x^2)\\ \\
        -q^{3n}(1+x)(1+x^2)+q^{5n}x(1+x)(1+x^2)+2q^{6n}x^2(1+x+x^2)-2q^{7n}x^3(1+x)-q^{8n}x^4\Bigg]\\ \\
        =\displaystyle\frac{1}{(q;q)_{\infty}}\displaystyle\sum_{n\geq 1}(-1)^{n}q^{\binom{n+1}{2}}\displaystyle\frac{(1-q^n)}{(1-q^{5n})^2}\Bigg[1+4q^n-6q^{2n}-4q^{3n}+4q^{5n}+6q^{6n}-4q^{7n}-q^{8n}\Bigg] \\ \\
        \equiv \displaystyle\frac{1}{(q;q)_{\infty}}\displaystyle\sum_{n\geq 1}(-1)^nq^{\binom{n+1}{2}}\displaystyle\frac{(1-q^n)^3(1+q^n)}{(1-q^{5n})}\pmod 5
\end{multline*}
where we have used $$\displaystyle\frac{\partial}{\partial x}\Bigr\rvert_{x = 1}(1-x)F(x,q,z)=-F(1,q,z)$$ in the penultimate step. This proves Corollary 3.3.
\end{proof}
\begin{proof}[Proof of Corollary 3.4]
Exactly as in the proof of Corollary 3.3, we get using Corollary 3.2,
\begin{multline*}
        \displaystyle\sum_{n\geq 0}\big[M_{\omega}(1,7,n)+2M_{\omega}(2,7,n)+3M_{\omega}(3,7,n)-3M_{\omega}(4,7,n)-2M_{\omega}(5,7,n)-M_{\omega}(6,7,n)\big]q^n \\ \\ 
        =\displaystyle\frac{\partial}{\partial x}\Bigr\rvert_{x = 1}\displaystyle\frac{1}{(q;q)_{\infty}}\displaystyle\sum_{n\geq 1}(-1)^{n-1}\displaystyle\frac{(xq;q)_n}{(q;q)_{n-1}}q^{\binom{n+1}{2}} \times 
        \Bigg[\displaystyle\frac{1}{1-q^{7n}}+\displaystyle\frac{x^6q^{5n}}{1-x^7q^{7n}}+\displaystyle\frac{2q^n}{1-q^{7n}}+\displaystyle\frac{2x^5q^{4n}}{1-x^7q^{7n}}
\end{multline*}
\begin{multline*}
        +\displaystyle\frac{3q^{2n}}{1-q^{7n}}+\displaystyle\frac{3x^4q^{3n}}{1-x^7q^{7n}}-\displaystyle\frac{3q^{3n}}{1-q^{7n}}-\displaystyle\frac{3x^3q^{2n}}{1-x^7q^{7n}}-\displaystyle\frac{2q^{4n}}{1-q^{7n}}-\displaystyle\frac{2x^2q^n}{1-x^7q^{7n}}-\displaystyle\frac{q^{5n}}{1-q^{7n}}-\displaystyle\frac{x}{1-x^7q^{7n}}\Bigg] \\ \\
        \equiv \displaystyle\frac{1}{(q;q)_{\infty}}\displaystyle\sum_{n\geq 1}(-1)^nq^{\binom{n+1}{2}}\displaystyle\frac{(1-q^n)^5(1+q^n)}{(1-q^{7n})}\pmod 7
\end{multline*}
where we have simplified as in the proof of Corollary 3.3 and again used $$\displaystyle\frac{\partial}{\partial x}\Bigr\rvert_{x = 1}(1-x)F(x,q,z)=-F(1,q,z).$$ This proves Corollary 3.4.
\end{proof}
\section{The Decomposition modulo 5}
In this section, we attempt to prove certain equalities which should imply Theorem 1.3. We start with the following conjecture.
\begin{conjecture}
\begin{align*}
    \displaystyle\frac{1}{(q;q)_{\infty}}\displaystyle\sum_{n\geq 1}\displaystyle\frac{(-1)^nq^{\binom{n+1}{2}}(1-q^n)^3(1+q^n)}{1-q^{5n}} 
    =-q(q^{25};q^{25})_{\infty}G(q^5)+q^2(q^{25};q^{25})_{\infty}H(q^5)\\ \\+q^3(q^{25};q^{25})_{\infty}\displaystyle\frac{H^2(q^5)}{G(q^5)}
\end{align*}
where $G(q)=\displaystyle\frac{1}{(q;q^5)_{\infty}(q^4;q^5)_{\infty}}$ and $H(q)=\displaystyle\frac{1}{(q^2;q^5)_{\infty}(q^3;q^5)_{\infty}}$ represent the familiar infinite products arising in the context of Rogers Ramanujan identities.
\end{conjecture}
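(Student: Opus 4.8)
\emph{Plan of proof.} The plan is to clear the denominator and establish the equivalent identity $S(q)=(q;q)_\infty\cdot(\mathrm{RHS})$, where
\[S(q):=\sum_{n\ge 1}\frac{(-1)^n q^{\binom{n+1}{2}}(1-q^n)^3(1+q^n)}{1-q^{5n}}.\]
The engine is the classical $5$-dissection of $(q;q)_\infty$ furnished by the Rogers--Ramanujan continued fraction. Writing $R:=R(q^5)=qH(q^5)/G(q^5)$, the relation
\[R^{-1}-1-R=\frac{(q;q)_\infty}{q\,(q^{25};q^{25})_\infty}\]
is equivalent to
\[(q;q)_\infty=(q^{25};q^{25})_\infty\Big(\frac{G(q^5)}{H(q^5)}-q-q^2\frac{H(q^5)}{G(q^5)}\Big).\]
Substituting this into the right-hand side of the conjecture and simplifying with $G(q^5)H(q^5)=(q^{25};q^{25})_\infty/(q^5;q^5)_\infty$, I would expand the product and collect it by the residue of the exponent modulo $5$, arriving at an explicit target of the form $\sum_{r=0}^{4}q^r f_r(q^5)$, each $f_r$ an explicit product of powers of $G(q^5)$, $H(q^5)$ and $(q^{25};q^{25})_\infty$.

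Next I would $5$-dissect the Lambert series $S(q)$ itself. Since $(1-q^n)^3(1+q^n)=1-2q^n+2q^{3n}-q^{4n}$ and $1/(1-q^{5n})=\sum_{j\ge 0}q^{5nj}$ contributes only multiples of $5$ to the exponents, the residue modulo $5$ of every monomial equals $\binom{n+1}{2}+cn\pmod 5$ with $c\in\{0,1,3,4\}$; and because $\binom{n+1}{2}\bmod 5$ is $5$-periodic in $n$ (taking the values $0,1,3,1,0$), this residue depends only on $n\bmod 5$. Splitting the sum over the classes $n\equiv n_0\pmod 5$ therefore yields $S(q)=\sum_{r=0}^{4}S_r$ with $S_r=q^r\,(\text{a series in }q^5)$, and reduces the conjecture to the five residue-by-residue identities $S_r=q^r f_r(q^5)$. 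Proving these establishes $S(q)=(q;q)_\infty\cdot(\mathrm{RHS})$; dividing back by $(q;q)_\infty$ recovers the conjecture, whose right-hand side manifestly has no exponent $\equiv 4\pmod 5$, so Theorem~1.3 follows at once.

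The crux, and the step I expect to be the genuine obstacle, is the closed-form evaluation of each residue component $S_r$, which is a finite combination of partial Lambert series $\sum_{n\equiv n_0}(-1)^n q^{\binom{n+1}{2}+cn}/(1-q^{5n})$. Here I would follow the hint of the abstract and recast these as Appell--Lerch sums $m(x,q,z)$ with the parameters specialized to fifth roots of unity and base $q\mapsto q^5$ (the same Appell--Lerch sums underlie the tenth-order mock theta functions mentioned in the abstract), then apply the Hickerson--Mortenson theta decomposition of $m(x,q,z)$ (in the spirit of the Atkin--Swinnerton-Dyer analysis that Andrews used for Theorems~1.1 and~1.2) to rewrite each component through ordinary theta functions. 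A final appeal to the quintuple product identity together with $R(q^5)=qH(q^5)/G(q^5)$ should collapse those theta expressions into the Rogers--Ramanujan products $G(q^5)$, $H(q^5)$ and $H^2(q^5)/G(q^5)$. Signs and normalizations can be pinned down by matching the expansion through $q^{10}$, where the proposed identity is already seen to hold.
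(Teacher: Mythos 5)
Your reduction is essentially the one the paper itself carries out in Theorem 4.2: the $5$-dissection of $(q;q)_\infty$ you derive from the Rogers--Ramanujan continued fraction is the same identity the paper extracts from Lemma 6 of Atkin--Swinnerton-Dyer, namely $(q;q)_\infty=(q^{25};q^{25})_\infty\big(G(q^5)/H(q^5)-q-q^2H(q^5)/G(q^5)\big)$, and splitting the Lambert series over $n\bmod 5$ (the paper first symmetrizes $1-2q^n+2q^{3n}-q^{4n}$ into a bilateral sum and then sets $n=5m+t$, $-2\le t\le 2$) produces exactly the five residue-by-residue identities (10)--(14) of Theorem 4.2. Up to that point your plan is sound and coincides with the paper's.

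The genuine gap is in the step you yourself flag as the crux. ``Recast as Appell--Lerch sums and apply the theta decomposition'' is a program, not a proof, and it is precisely there that the program does not close: of the five component identities, two (the $G$ and $H$ components, (11) and (12)) follow from a specialization of the $_6\phi_5$ summation, and two more ((10) and (13)) follow from the change-of-$z$ formula for $m(x,q,z)$ (Lemma 11.3.4 of [5]), just as you anticipate. But the fifth component --- the one supported on exponents $\equiv 0\pmod 5$, equation (14) --- involves the sums $\sum_m'(-1)^mq^{(5m^2\pm m-2)/2}/(1-q^{5m})$ in which the $m=0$ term must be deleted; these are not honest Appell--Lerch sums (the omitted term is a pole), and after regularizing them as limits $\lim_{z\to q^{-a}}\big(m(q^b,q^5,z)+\tfrac{z}{j(z;q^5)}\tfrac{1}{1-q^bz}\big)$ the change-of-$z$ formula no longer collapses the expression to the required product $H^3(q)/G^2(q)$. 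The paper is only able to restate (14) in equivalent forms (Theorem 4.5) and leaves it open, which is exactly why the statement is labelled a Conjecture rather than a Theorem. Matching coefficients through $q^{10}$ pins down signs but does not substitute for the missing identity; as written, your plan reproduces the paper's reduction while leaving unproved the one ingredient that is actually in doubt.
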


We will next show that Conjecture 4.1 follows from five different identities we get on comparing coefficients of powers of $q$ after properly expanding both sides of 4.1. We also give proofs of four of these five identities. We have not been able to yet prove the fifth identity, but we give different formulations of it and various ways we have attacked it. 
\begin{remark}
We note that, Corollary 3.3 and Conjecture 4.1 would together imply that 
\begin{multline}
        \displaystyle\sum_{n\geq 0}\bigg[M_{\omega}(1,5,n)+2M_{\omega}(2,5,n)-2M_{\omega}(3,5,n)-M_{\omega}(4,5,n)\bigg]q^n\equiv \\ \\
        \displaystyle\frac{1}{(q;q)_{\infty}}\displaystyle\sum_{n\geq 1}(-1)^nq^{\binom{n+1}{2}}\displaystyle\frac{(1-q^n)^3(1+q^n)}{(1-q^{5n})}\pmod 5 \\ \\
        \equiv -q(q^{25};q^{25})_{\infty}G(q^5)+q^2(q^{25};q^{25})_{\infty}H(q^5)+q^3(q^{25};q^{25})_{\infty}\displaystyle\frac{H^2(q^5)}{G(q^5)} \pmod 5
\end{multline}
so that $\displaystyle\sum_{n\geq 0}\bigg[M_{\omega}(1,5,5n+4)+2M_{\omega}(2,5,5n+4)-2M_{\omega}(3,5,5n+4)-M_{\omega}(4,5,5n+4)\bigg]\equiv 0\pmod 5$ since there's no exponent of $q$ which is 4 modulo 5 on the right side of Conjecture 4.1. This means that, in conjunction with Corollary 3.3, a proof of Conjecture 4.1 will yield Theorem 1.3
\end{remark}
\begin{theorem}
Conjecture 4.1 follows from the following five equalities. \begin{equation}
    (i)\hspace{0.5cm} \displaystyle\sum_{m}(-1)^m\displaystyle\frac{q^{\frac{5m^2+3m}{2}}}{1-q^{5m+1}}-\displaystyle\sum_{m}(-1)^m\displaystyle\frac{q^{\frac{5m^2-3m}{2}}}{1-q^{5m-2}}=(q^5;q^5)_{\infty}^2\displaystyle\frac{G^2(q)}{H(q)} 
\end{equation}
\begin{equation}
    (ii) \hspace{0.5cm}\displaystyle\sum_{m}(-1)^m\displaystyle\frac{q^{\frac{5m^2+5m}{2}}}{1-q^{5m+1}}=(q^5;q^5)_{\infty}^2G(q) 
\end{equation}
\begin{equation}
    (iii)\hspace{0.5cm}\displaystyle\sum_{m}(-1)^m\displaystyle\frac{q^{\frac{5m^2+5m}{2}}}{1-q^{5m+2}}=(q^5;q^5)_{\infty}^2H(q) 
\end{equation}
\begin{equation}
    (iv)\hspace{0.5cm}\displaystyle\sum_{m}(-1)^m\displaystyle\frac{q^{\frac{5m^2+m-2}{2}}}{1-q^{5m-1}}-\displaystyle\sum_{m}(-1)^m\displaystyle\frac{q^{\frac{5m^2-m-2}{2}}}{1-q^{5m-2}}=(q^5;q^5)_{\infty}^2\displaystyle\frac{H^2(q)}{G(q)} 
\end{equation}
\begin{equation*}
    (v)\hspace{0.5cm}-\displaystyle\sum_{m}' (-1)^m\displaystyle\frac{q^{\frac{5m^2+m-2}{2}}}{1-q^{5m}}+\displaystyle\sum_{m}(-1)^m\displaystyle\frac{q^{\frac{5m^2-m-2}{2}}}{1-q^{5m-1}}+2\displaystyle\sum_{m}'(-1)^m\displaystyle\frac{q^{\frac{5m^2+3m-2}{2}}}{1-q^{5m}}+
\end{equation*}
\begin{equation}
2\displaystyle\sum_{m}(-1)^m\displaystyle\frac{q^{\frac{5m^2+7m}{2}}}{1-q^{5m+2}}=\\ \\
    (q^5;q^5)_{\infty}^2\displaystyle\frac{H^3(q)}{G^2(q)}
\end{equation}
\end{theorem}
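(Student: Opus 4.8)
The plan is to prove Conjecture 4.1 by performing a $5$-dissection of its left side and checking the five residue classes modulo $5$ one at a time, the point being that, after the rescaling $q\mapsto q^5$, these five classes collapse to exactly the identities (i)--(v).

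First I would clear the numerator. Since $(1-q^n)^3(1+q^n)=1-2q^n+2q^{3n}-q^{4n}$, the left side of Conjecture 4.1 is $\tfrac{1}{(q;q)_\infty}S$ with
\[
S=\sum_{n\geq 1}(-1)^n q^{\binom{n+1}{2}}\,\frac{1-2q^{n}+2q^{3n}-q^{4n}}{1-q^{5n}}=A_0-2A_1+2A_3-A_4,
\]
where $A_j=\sum_{n\geq 1}(-1)^n q^{\binom{n+1}{2}+jn}/(1-q^{5n})$. Because $\binom{n+1}{2}$ is invariant under $n\mapsto -1-n$ while $(-1)^n$ flips sign, each $A_j$ can be folded into a bilateral Appell--Lerch type sum over $\mathbb{Z}$, matching the unrestricted ranges $\sum_m$ in (i)--(v). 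I would then substitute $n=5m+\rho$ with $\rho\in\{-2,-1,0,1,2\}$. A direct expansion gives
\[
\binom{n+1}{2}+jn=5\cdot\frac{5m^2+(2\rho+2j+1)m}{2}+\frac{\rho(\rho+2j+1)}{2},\qquad 1-q^{5n}=1-(q^5)^{5m+\rho},
\]
so that, writing $Q=q^5$ and factoring out the integer power $q^{\rho(\rho+2j+1)/2}$, each dissected piece becomes $\sum_m(-1)^m Q^{(5m^2+(2\rho+2j+1)m)/2}/(1-Q^{5m+\rho})$ --- precisely a summand of (i)--(v) in the variable $Q$, with $\rho$ supplying the shift in the denominator. (In particular the $m=0$ exclusion attached to the two $1-q^{5m}$ summands of (v) is exactly the $\rho=0$ pole that must be dropped.)

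Next I would regroup the dissected summands by the residue $r$ of the leftover power $q^{\rho(\rho+2j+1)/2}$ modulo $5$, writing $S=\sum_{r=0}^{4}q^{r}s_r(q^5)$. The signed combination $(+1,-2,+2,-1)$ inherited from $S$ is arranged so that, for each $r$, the contributing $(j,\rho)$ terms assemble into the left-hand side of one of (i)--(v) evaluated at $Q=q^5$; applying those identities then evaluates
\[
s_r(q^5)=(q^{25};q^{25})_\infty^2\,R_r(q^5),
\]
where $R_r$ ranges over the Rogers--Ramanujan ratios $G^2/H,\ G,\ H,\ H^2/G,\ H^3/G^2$. To finish, I would divide by $(q;q)_\infty$ and convert the five base series into the three-term right side, using the product relations $G(q)H(q)=(q^5;q^5)_\infty/(q;q)_\infty$ and its image $G(q^5)H(q^5)=(q^{25};q^{25})_\infty/(q^5;q^5)_\infty$, the Jacobi-triple-product forms $(q^5;q^5)_\infty/G(q)=\sum_m(-1)^m q^{(5m^2+3m)/2}$ and $(q^5;q^5)_\infty/H(q)=\sum_m(-1)^m q^{(5m^2+m)/2}$, together with the classical $5$-dissection of $1/(q;q)_\infty$ in terms of $G(q^5),H(q^5)$.

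The main obstacle is precisely this last reassembly. Since $\tfrac{(q^{25};q^{25})_\infty}{(q;q)_\infty}=G(q)H(q)G(q^5)H(q^5)$ is \emph{not} a series in $q^5$ alone, dividing $S=\sum_r q^r s_r(q^5)$ by $(q;q)_\infty$ genuinely mixes all five base series into each output class, so all of (i)--(v) are needed, not merely the three whose ratios appear on the right. The delicate points are verifying that the residues $r=1,2,3$ combine to give exactly $-q(q^{25};q^{25})_\infty G(q^5)$, $q^2(q^{25};q^{25})_\infty H(q^5)$ and $q^3(q^{25};q^{25})_\infty H^2(q^5)/G(q^5)$, and that the residues $r=0,4$ cancel completely --- this last cancellation being the exact-series refinement of the vanishing of the $q^{5n+4}$ coefficient noted after Conjecture 4.1. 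By contrast, the numerator dissection and the identification of each group with an entry of (i)--(v) are mechanical once the substitution $n=5m+\rho$ and the bookkeeping of signs are fixed.
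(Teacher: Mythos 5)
Your treatment of the sum side coincides with the paper's: expand $(1-q^n)^3(1+q^n)=1-2q^n+2q^{3n}-q^{4n}$, fold the four one-sided series into the two bilateral sums $\sum_n{}'(-1)^nq^{n(n+1)/2}/(1-q^{5n})-2\sum_n{}'(-1)^nq^{n(n+3)/2}/(1-q^{5n})$, substitute $n=5m+\rho$ with $\rho\in\{-2,\dots,2\}$, and recognize the left sides of (i)--(v) in the residue classes (this is the paper's (17)--(20), including the dropped $m=\rho=0$ pole that produces the primed sums in (v)). Where you diverge is in handling the Euler product, and that is exactly where your plan stalls. You propose to \emph{divide} the dissected sum by $(q;q)_\infty$ and reassemble via the $5$-dissection of $1/(q;q)_\infty$, which mixes all five residue classes and obliges you to verify what you yourself flag as the ``delicate'' vanishing of the classes $r\equiv 0,4$; none of that reassembly is carried out, so as written the reduction is not established. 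The paper sidesteps this entirely by \emph{multiplying} the conjectured right side by $(q;q)_\infty$ and using the three-term pentagonal dissection $(q;q)_\infty=(q^{25};q^{25})_\infty\bigl(G(q^5)/H(q^5)-q-q^2H(q^5)/G(q^5)\bigr)$ from Lemma 6 of Atkin--Swinnerton-Dyer: the product of two three-term expressions collapses in a few lines to $(q^{25};q^{25})_\infty^2\bigl(-q\,G^2/H+2q^2G+q^3H-2q^4H^2/G-q^5H^3/G^2\bigr)$ in the argument $q^5$, which matches the five dissected pieces class by class with no cancellation left to check. Since $S/(q;q)_\infty=\mathrm{RHS}$ is equivalent to $S=(q;q)_\infty\cdot\mathrm{RHS}$, the obstacle you identify is an artifact of dividing rather than multiplying; flip that one step and your argument closes up into the paper's proof.
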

In this theorem and subsequently, we adopt the notation that $\displaystyle\sum_{n}$ means $\displaystyle\sum_{n=-\infty}^{\infty}$ while $\displaystyle\sum_{n}'$ stands for $\displaystyle\sum_{\substack{n=-\infty\\n\neq 0}}^{\infty}.$
\begin{proof}
Recall that Conjecture 4.1 is the following:
\begin{align*}
    \displaystyle\frac{1}{(q;q)_{\infty}}\displaystyle\sum_{n\geq 1}\displaystyle\frac{(-1)^nq^{\binom{n+1}{2}}(1-q^n)^3(1+q^n)}{1-q^{5n}}
    =-q(q^{25};q^{25})_{\infty}G(q^5)+q^2(q^{25};q^{25})_{\infty}H(q^5)
\end{align*}
\begin{align}
+q^3(q^{25};q^{25})_{\infty}\displaystyle\frac{H^2(q^5)}{G(q^5)}
\end{align}
By Lemma 6 in [1], 
\begin{equation*}
    (q;q)_{\infty}=-qP(0)\Bigg[1-q^{-1}\displaystyle\frac{P(2)}{P(1)}+q\displaystyle\frac{P(4)}{P(2)}\Bigg]
\end{equation*}
with $P(0)=(q^{25};q^{25})_{\infty},P(1)=P(4)=(q^5,q^{20};q^{25})_{\infty},P(2)=(q^{10},q^{15};q^{25})_{\infty}.$ \\ \\
Thus, $(q;q)_{\infty}=(q^{25};q^{25})_{\infty}\Bigg(\displaystyle\frac{G(q^5)}{H(q^5)}-q-q^2\displaystyle\frac{H(q^5)}{G(q^5)}\Bigg).$ We get that 
\begin{multline}
(q;q)_{\infty}\Bigg[-q(q^{25};q^{25})_{\infty}G(q^5)+q^2(q^{25};q^{25})_{\infty}H(q^5)+q^3(q^{25};q^{25})_{\infty}\displaystyle\frac{H^2(q^5)}{G(q^5)}\Bigg]=\\ \\(q^{25};q^{25})_{\infty}^2\Bigg(-q\displaystyle\frac{G^2(q^5)}{H(q^5)}+2q^2G(q^5)+q^3H(q^5)-2q^4\displaystyle\frac{H^2(q^5)}{G(q^5)}-q^5\displaystyle\frac{H^3(q^5)}{G^2(q^5)}\Bigg)
\end{multline}
On the other hand, 
\begin{equation*}
    \displaystyle\sum_{n\geq 1}\displaystyle\frac{(-1)^nq^{\binom{n+1}{2}}(1-q^n)^3(1+q^n)}{1-q^{5n}}=\displaystyle\sum_{n\geq 1}\displaystyle\frac{(-1)^nq^{n(n+1)/2}(1-2q^n+2q^{3n}-q^{4n})}{1-q^{5n}}.
\end{equation*}
Since on replacing $n$ with $-n$,
\begin{equation*}
    \displaystyle\sum_{n\geq 1}\displaystyle\frac{(-1)^nq^{n(n+1)/2}q^{3n}}{1-q^{5n}}=-\displaystyle\sum_{n\leq -1}\displaystyle\frac{(-1)^nq^{n(n+1)/2}q^{n}}{1-q^{5n}}
\end{equation*}
and
\begin{equation*}
    \displaystyle\sum_{n\geq 1}\displaystyle\frac{(-1)^nq^{n(n+1)/2}q^{4n}}{1-q^{5n}}=-\displaystyle\sum_{n\leq -1}\displaystyle\frac{(-1)^nq^{n(n+1)/2}}{1-q^{5n}},
\end{equation*}
we get that,
\begin{equation}
\displaystyle\sum_{n\geq 1}\displaystyle\frac{(-1)^nq^{\binom{n+1}{2}}(1-q^n)^3(1+q^n)}{1-q^{5n}}=\displaystyle\sum'_{n}\displaystyle\frac{(-1)^nq^{n(n+1)/2}}{1-q^{5n}}-2\displaystyle\sum'_{n}\displaystyle\frac{(-1)^nq^{n(n+3)/2}}{1-q^{5n}}
\end{equation}
Set $n=5m+t.$ \\ \\
Then 
\begin{align*}
    \displaystyle\sum'_{n}\displaystyle\frac{(-1)^nq^{n(n+1)/2}}{1-q^{5n}}=\displaystyle\sum_{t=-2}^{2}(-1)^tq^{t(t+1)/2}\displaystyle\sum_{m}\displaystyle\frac{(-1)^mq^{((5m+t)(5m+t+1)-t(t+1))/2}}{1-q^{25m+5t}} \\
    =\displaystyle\sum_{t=-2}^{2}(-1)^tq^{t(t+1)/2}\displaystyle\sum_{m}\displaystyle\frac{(-1)^mq^{(25m^2+10mt+5m)/2}}{1-q^{25m+5t}} 
\end{align*}
\hspace{6.5cm}(with the term for $m=t=0$ omitted)
\begin{multline}
    =\displaystyle\sum_{m}'\displaystyle\frac{(-1)^mq^{(25m^2+5m)/2}}{1-q^{25m}}-\displaystyle\sum_{m}\displaystyle\frac{(-1)^mq^{(25m^2+15m+2)/2}}{1-q^{25m+5}}-\displaystyle\sum_{m}\displaystyle\frac{(-1)^mq^{(25m^2-5m)/2}}{1-q^{25m-5}}\\-\displaystyle\sum_{m}\displaystyle\frac{(-1)^mq^{(25m^2+25m+6)/2}}{1-q^{25m+10}}+\displaystyle\sum_{m}\displaystyle\frac{(-1)^mq^{(25m^2-15m+2)/2}}{1-q^{25m-10}}.
\end{multline}
Similarly, 
\begin{multline}
    \displaystyle\sum'_{n}\displaystyle\frac{(-1)^nq^{n(n+3)/2}}{1-q^{5n}}=\displaystyle\sum_{m}'\displaystyle\frac{(-1)^mq^{(25m^2+15m)/2}}{1-q^{25m}}-\displaystyle\sum_{m}\displaystyle\frac{(-1)^mq^{(25m^2+25m+4)/2}}{1-q^{25m+5}}\\ -\displaystyle\sum_{m}\displaystyle\frac{(-1)^mq^{(25m^2+5m-2)/2}}{1-q^{25m-5}}+\displaystyle\sum_{m}\displaystyle\frac{(-1)^mq^{(25m^2+35m+10)/2}}{1-q^{25m+10}}+\displaystyle\sum_{m}\displaystyle\frac{(-1)^mq^{(25m^2-5m-2)/2}}{1-q^{25m-10}}.
\end{multline}
Using (17),(18) and (19), we get that 
\begin{multline}
    \displaystyle\sum_{n\geq 1}\displaystyle\frac{(-1)^nq^{\binom{n+1}{2}}(1-q^n)^3(1+q^n)}{1-q^{5n}}=\\ \\ q\Bigg[-\displaystyle\sum_{m}\displaystyle\frac{(-1)^mq^{(25m^2+15m)/2}}{1-q^{25m+5}}+\displaystyle\sum_{m}\displaystyle\frac{(-1)^mq^{(25m^2-15m)/2}}{1-q^{25m-10}}\Bigg]+2q^2\Bigg[\displaystyle\sum_{m}\displaystyle\frac{(-1)^mq^{(25m^2+25m)/2}}{1-q^{25m+5}}\Bigg]\\ \\ +q^3\Bigg[\displaystyle\sum_{m}\displaystyle\frac{(-1)^mq^{(25m^2+25m)/2}}{1-q^{25m+10}}\Bigg]+2q^4\Bigg[-\displaystyle\sum_{m}\displaystyle\frac{(-1)^mq^{(25m^2+5m-10)/2}}{1-q^{25m-5}}+\displaystyle\sum_{m}\displaystyle\frac{(-1)^mq^{(25m^2-5m-10)/2}}{1-q^{25m-10}}\Bigg]\\ \\ +q^5\Bigg[\displaystyle\sum_{m}'\displaystyle\frac{(-1)^mq^{(25m^2+5m-10)/2}}{1-q^{25m}}-\displaystyle\sum_{m}\displaystyle\frac{(-1)^mq^{(25m^2-5m-10)/2}}{1-q^{25m-5}}-2\displaystyle\sum_{m}'\displaystyle\frac{(-1)^mq^{(25m^2+15m-10)/2}}{1-q^{25m}}\\ \\ -2\displaystyle\sum_{m}\displaystyle\frac{(-1)^mq^{(25m^2+35m)/2}}{1-q^{25m+10}}\Bigg]
\end{multline}
Comparing coefficients of $q,q^2,q^3,q^4,q^5$ between (16) and (20) and replacing $q$ by $q^{1/5}$, the proof is complete.
\end{proof}
\begin{theorem}
Equations (11) and (12) in Theorem 4.2 are valid.
\end{theorem}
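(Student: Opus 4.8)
The plan is to deduce both identities from a single classical Lambert-series evaluation together with a base change $q\mapsto q^5$. First I would establish
\[
\sum_{n=-\infty}^{\infty}\frac{(-1)^n q^{\binom{n+1}{2}}}{1-zq^n}=\frac{(q;q)_\infty^2}{(z;q)_\infty(q/z;q)_\infty}
\]
(call it $(\ast)$) by the Liouville/residue method: as functions of $z$, both sides have only simple poles, at $z=q^{-n}$ for $n\in\mathbb{Z}$, both satisfy the functional equation $f(qz)=-z\,f(z)$, and comparison of the residue at $z=1$ (which is $-1$ on each side) forces their quotient to be an elliptic function with no poles, hence the constant $1$. This is the same flavour of input drawn from [1], and it converts a Lambert series directly into a theta quotient.

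For $(12)$ I would only specialise. Since $\frac{5m^2+5m}{2}=5\binom{m+1}{2}$, the left side of $(12)$ is exactly the left side of $(\ast)$ after replacing $q$ by $q^5$ and setting $z=q$. The right side of $(\ast)$ then reads $\frac{(q^5;q^5)_\infty^2}{(q;q^5)_\infty(q^4;q^5)_\infty}$, which is precisely $(q^5;q^5)_\infty^2 G(q)$ by the definition of $G$; this is $(12)$. Identity $(13)$ would be the same computation with $z=q^2$, landing on $H$.

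Equation $(11)$ is genuinely harder, because the linear terms $\pm 3m$ break the clean fit: in base $q^5$ the exponents are $\frac{5m^2\pm 3m}{2}=5\binom{m+1}{2}\mp m$, so each of the two sums is a \emph{weighted} bilateral Lambert series, twisted by $w=q^{\mp 1}$ relative to the base $q^5$. My approach would be to peel off the modular part by a partial fraction such as $\frac{q^{-m}}{1-q^{5m+1}}=q^{-m}+\frac{q^{4m+1}}{1-q^{5m+1}}$: the free piece $\sum_m(-1)^m q^{(5m^2+3m)/2}$ collapses by the Jacobi triple product to the eta-quotient $(q^5;q^5)_\infty(q,q^4;q^5)_\infty$, leaving a residual weighted sum. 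Carrying out the analogous split on the second sum and reflecting $m\mapsto-m$, I would then show that the two residual weighted sums combine into a single modular series, evaluate it, and finally rewrite $G$ and $H$ through their triple-product forms to assemble $(q^5;q^5)_\infty^2\,G^2(q)/H(q)$.

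The hard part is this last step. Unlike $(12)$, whose left side is a theta quotient term by term, the individual weighted sums in $(11)$ are Appell--Lerch sums of mock character, which is exactly the point of contact with the tenth-order mock theta functions mentioned in the introduction; only the particular difference formed in $(11)$ is modular, because the non-holomorphic completions of the two pieces cancel. Proving that cancellation, and pinning the surviving eta-quotient to the Rogers--Ramanujan quotient $G^2/H$ rather than to some neighbouring product, is where the genuine work lies, and it is the same obstruction that leaves identity $(v)$ out of reach by this method.
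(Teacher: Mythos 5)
Your master identity $(\ast)$ is correct: it is the classical partial-fraction expansion of $(q;q)_\infty^3/j(z;q)$, and the Liouville-type argument you sketch (common multiplier $f(qz)=-zf(z)$ for both sides, matching residue $-1$ at $z=1$, and the nonexistence of a nonzero holomorphic solution of that functional equation on $\mathbb{C}^*$) does establish it. Its specializations $q\mapsto q^5$ with $z=q$ and $z=q^2$ are exactly equations (11) and (12) of Theorem 4.2, so the portion of your proposal that addresses the stated theorem is complete. It is also essentially the paper's computation in different packaging: the paper obtains $(1-q^i)\sum_m(-1)^mq^{5m(m+1)/2}/(1-q^{5m+i})=(q^5;q^5)_\infty^2/(q^{5-i},q^{5+i};q^5)_\infty$ for $i=1,2$ by specializing Andrews's ${}_6\phi_5$ summation at $z=1$, $a_1\to\infty$, $a_2=q^i$, $a_3=q^{-i}$ in base $q^5$, which is precisely $(\ast)$; you simply replace the ${}_6\phi_5$ citation by a self-contained elliptic-function proof of the same bilateral evaluation.

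The difficulty is that you have misread the numbering. Both identities covered by this theorem are of the ``clean'' form above; the identity with exponents $(5m^2\pm3m)/2$ and target $(q^5;q^5)_\infty^2G^2(q)/H(q)$ is equation (10) (part (i) of Theorem 4.2), which is handled by the separate Theorem 4.4 and is not part of the present statement, and what you call (12) and (13) are the paper's (11) and (12). For equation (10) your sketch is genuinely incomplete: the partial-fraction peeling $q^{-m}/(1-q^{5m+1})=q^{-m}+q^{4m+1}/(1-q^{5m+1})$ does isolate a triple-product piece, but the residual $q\sum_m(-1)^mq^{(5m^2+11m)/2}/(1-q^{5m+1})$ is again an Appell--Lerch sum and you stop where the real work begins. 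The paper proves (10) by writing each piece as $j(\cdot\,;q^5)\,m(q^2,q^5,\cdot)$ and invoking the difference formula for $m(x,q,z_1)-m(x,q,z_0)$ (Lemma 11.3.4 of [5]), which is the precise form of the ``completions cancel'' heuristic you mention. None of this affects the validity of your proof of (11) and (12), but the discussion of the $\pm3m$ identity should be excised from a proof of this theorem.
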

\begin{proof}
By Theorem 3.2 in [4], we have
\begin{equation}
    ^6\phi_5\left[\begin{array}{c}
             z, q\sqrt z,-q\sqrt z,a_1,a_2,a_3 \\
             \sqrt z,-\sqrt z,\displaystyle\frac{zq}{a_1},\displaystyle\frac{zq}{a_2},\displaystyle\frac{zq}{a_3}
    \end{array}
;q,\displaystyle\frac{zq}{a_1a_2a_3}\right]=\displaystyle\prod\Bigg[\begin{array}{c}
             zq, \displaystyle\frac{zq}{a_1a_2},\displaystyle\frac{zq}{a_1a_3},\displaystyle\frac{zq}{a_2a_3} \\
             \displaystyle\frac{zq}{a_1},\displaystyle\frac{zq}{a_2},\displaystyle\frac{zq}{a_3},\displaystyle\frac{zq}{a_1a_2a_3}
    \end{array}
    \Bigg]
\end{equation}
The left side equals 
\begin{equation*}
\displaystyle\sum_{m\geq 0} \displaystyle\frac{(z;q)_{m}(1-zq^{2m})}{(1-z)(q;q)_m}\displaystyle\frac{(a_1,a_2,a_3;q)_m}{(\displaystyle\frac{zq}{a_1},\displaystyle\frac{zq}{a_2},\displaystyle\frac{zq}{a_3};q)_m}\Bigg(\displaystyle\frac{zq}{a_1a_2a_3}\Bigg)^m 
\end{equation*}
\begin{equation*}
=\displaystyle\sum_{m\geq 0}(-1)^m(1+q^{5m})q^{5m(m+1)/2}\displaystyle\frac{(q,\displaystyle\frac{1}{q};q^5)_m}{(q^6,q^4;q^5)_m} \text{  (put  } z=1, a_1\rightarrow\infty, a_2=q,a_3=\displaystyle\frac{1}{q},q=q^5)
\end{equation*}
\begin{equation*}
=1+\displaystyle\sum_{m\geq 1}(-1)^m(1+q^{5m})q^{5m(m+1)/2}\displaystyle\frac{(1-q)(1-\displaystyle\frac{1}{q})}{(1-q^{5m+1})(1-q^{5m-1})}
\end{equation*}
\begin{equation*}
=1+\displaystyle\sum_{m\geq 1}(-1)^mq^{5m(m+1)/2}(1-q)(1-q^{-1})\Bigg(\displaystyle\frac{1}{(1-q)(1-q^{5m-1})}+\displaystyle\frac{1}{(1-q^{-1})(1-q^{5m+1})}\Bigg) 
\end{equation*}
\begin{equation*}
=1+\displaystyle\sum_{m\leq -1}\displaystyle\frac{(-1)^mq^{(5m^2+5m)/2}(1-q)}{1-q^{5m+1}}+\displaystyle\sum_{m\geq 1}\displaystyle\frac{(-1)^mq^{(5m^2+5m)/2}(1-q)}{1-q^{5m+1}}
\end{equation*}
\begin{equation*}
=(1-q)\displaystyle\sum_{m}\displaystyle\frac{(-1)^mq^{(5m^2+5m)/2}}{1-q^{5m+1}}.
\end{equation*}
On the other hand, setting $z=1, a_1\rightarrow\infty, a_2=q,a_3=\displaystyle\frac{1}{q},q=q^5$ on the right side of (21), we get $\displaystyle\frac{(q^5;q^5)_{\infty}^2}{(q^4,q^6;q^5)_{\infty}}=(1-q)(q^5;q^5)_{\infty}^2G(q)$ and thus equation (11) is proved. \\ \\
Again, if we set $z=1, a_1\rightarrow\infty, a_2=q^2,a_3=\displaystyle\frac{1}{q^2},q=q^5$ on the left side of (21), we similarly get \\ \\
\begin{equation*}
\displaystyle\sum_{m\geq 0} \displaystyle\frac{(q;q)_{m}(1-zq^{2m})}{(1-z)(q;q)_m}\displaystyle\frac{(a_1,a_2,a_3;q)_m}{(\displaystyle\frac{zq}{a_1},\displaystyle\frac{zq}{a_2},\displaystyle\frac{zq}{a_3};q)_m}\Bigg(\displaystyle\frac{zq}{a_1a_2a_3}\Bigg)^m 
\end{equation*}
\begin{equation*}
=\displaystyle\sum_{m\geq 0}(-1)^m(1+q^{5m})q^{5m(m+1)/2}\displaystyle\frac{(q^2,\displaystyle\frac{1}{q^2};q^5)_m}{(q^7,q^3;q^5)_m}
\end{equation*}
\begin{equation*}
=1+\displaystyle\sum_{m\geq 1}(-1)^m(1+q^{5m})q^{5m(m+1)/2}\displaystyle\frac{(1-q^2)(1-\displaystyle\frac{1}{q^2})}{(1-q^{5m+2})(1-q^{5m-2})}
\end{equation*}
\begin{equation*}
=(1-q^2)\displaystyle\sum_{m}\displaystyle\frac{(-1)^mq^{(5m^2+5m)/2}}{1-q^{5m+2}}
\end{equation*}
exactly as before, whereas under the same substitutions, the right side of (21) equals $\displaystyle\frac{(q^5;q^5)_{\infty}^2}{(q^3,q^7;q^5)_{\infty}}=(1-q^2)(q^5;q^5)_{\infty}^2H(q)$, proving (12).
\end{proof}
\begin{theorem}
Equations (10) and (13) in Theorem 4.2 are valid. 
\end{theorem}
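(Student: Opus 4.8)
The plan is to push the very-well-poised template of Theorem 4.3 as far as it will go and then supply the extra ingredient that the mixed residue classes in (10) and (13) demand. Recall that in Theorem 4.3 the summation (21) was specialized with $z=1$, $a_1\to\infty$, $a_2=q^{a}$, $a_3=q^{-a}$ (base $q^5$); the condition $a_3=1/a_2$ is exactly what lets the two $q$-Pochhammer ratios telescope, via $(q^{a};q^5)_m/(q^{a+5};q^5)_m=(1-q^{a})/(1-q^{5m+a})$ and $(q^{-a};q^5)_m/(q^{5-a};q^5)_m=(1-q^{-a})/(1-q^{5m-a})$, whereupon the very-well-poised factor $1+q^{5m}$ recombines the unilateral series into the single bilateral Lambert sums of (11) ($a=1$) and (12) ($a=2$). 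Writing the two summands of (10) as $A=\sum_m(-1)^mq^{(5m^2+3m)/2}/(1-q^{5m+1})$ and $B=\sum_m(-1)^mq^{(5m^2-3m)/2}/(1-q^{5m-2})$, the target is $A-B=(q^5;q^5)_{\infty}^2\,G^2/H$, and symmetrically for (13).

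First I would record \emph{why} the exact route of Theorem 4.3 stalls. The denominators of (10) and (13) lie in two \emph{distinct} residue classes modulo $5$---namely $\{5m+1,\,5m-2\}$ for (10) and $\{5m-1,\,5m-2\}$ for (13)---together with the off-balance exponents $q^{(5m^2\pm 3m)/2}$ and $q^{(5m^2\pm m-2)/2}$. With $z=1$ both Pochhammer ratios in (21) telescope to single factors only when $a_3=1/a_2$, and this symmetric choice forces the \emph{matched} pair $1-q^{5m\pm a}$, already exhausted by (11) and (12); it can never return two different residues. One may try to move the exponent off balance by taking $z\neq 1$, but then the factors $(z;q^5)_m$, $1-zq^{10m}$ and $(1-z)^{-1}$ re-enter and spoil the clean Lambert form. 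Hence $G^2/H$ and $H^2/G$ lie outside the reach of any single specialization of (21), and the difference-of-two-sums shape on the left of (10), (13) is the visible symptom of this.

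To supply the missing ingredient I would return to the Atkin--Swinnerton-Dyer Lambert-series technology of [1] that the paper has already used for the $5$-dissection of $(q;q)_{\infty}$. The key observation is that (10)--(14) are precisely the five residue components, modulo $5$, of the single Lambert series $\sum_{n\ge 1}(-1)^n q^{\binom{n+1}{2}}(1-q^n)^3(1+q^n)/(1-q^{5n})$ produced by the index split $n=5m+t$ in the proof of Theorem 4.2, and [1] evaluates exactly such generalized sums $\sum_m(-1)^m q^{(5m^2+jm)/2}/(1-q^{5m+k})$ in terms of the same building blocks $P(0),P(1),P(2)$, i.e.\ in terms of $(q^{25};q^{25})_{\infty}$, $G(q^5)$ and $H(q^5)$. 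Concretely I would (i) reindex the second sum in (10) by a shift of the summation variable so that both $A$ and $B$ range over positive residues, (ii) apply the Atkin--Swinnerton-Dyer evaluation to $A$ and to $B$ separately, each coming out as a sum of theta quotients, and (iii) combine, using the Jacobi triple product (and, where needed, the quintuple product), to collapse $A-B$ to $(q^5;q^5)_{\infty}^2\,G^2/H$; the treatment of (13) is identical with the roles of $G$ and $H$ interchanged.

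The step I expect to be the main obstacle is exactly (iii). Each of $A$ and $B$ is an \emph{incomplete} mixed-residue Lambert series whose closed form is a genuine sum of theta quotients rather than a single clean product; it is only the prescribed \emph{difference} $A-B$ that telescopes down to the Rogers--Ramanujan ratio. Arranging the cross terms so that the residual ``error'' theta quotients from $A$ and from $B$ annihilate, leaving precisely $G^2/H$, is the delicate bookkeeping at the heart of the argument, and is also what pins down the particular signs and exponents recorded in (10) and (13). As a safeguard I would first confirm the identity to several orders in $q$, since both sides are explicit $q$-series. Should the Atkin--Swinnerton-Dyer route prove unwieldy, the natural alternative is a higher very-well-poised transformation---Watson's passage from a terminating $^8\phi_7$ to a balanced $^4\phi_3$, specialized to base $q^5$---whose $^4\phi_3$ side evaluates by the triple product to the same ratio, the non-telescoping Pochhammer factor left over from an asymmetric choice in (21) being exactly what the $^8\phi_7$ resums.
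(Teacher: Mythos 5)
Your diagnosis of why the ${}_6\phi_5$ specialization behind Theorem 4.3 cannot reach (10) and (13) is sound, and you have correctly sensed the key structural fact: it is only the prescribed \emph{difference} of the two bilateral sums that collapses to a theta quotient. But the route you propose through that fact has a genuine gap. In step (ii) you want to evaluate $A$ and $B$ separately, ``each coming out as a sum of theta quotients,'' and in step (iii) to cancel the residual pieces. This cannot work as described: each of $A$ and $B$ is, up to an explicit theta multiplier, a \emph{single} Appell--Lerch sum --- in the paper's notation $A=S_1=-q\,j(1/q;q^5)\,m(q^2,q^5,1/q)$ and the second sum is likewise a multiple of $m(q^2,q^5,1/q^4)$, where $m(x,q,z)=-\frac{z}{j(z;q)}\sum_{r}\frac{(-1)^rq^{r(r+1)/2}z^r}{1-xzq^r}$ --- and a single Appell--Lerch sum of this kind is a genuinely mock (non-modular) object; these particular ones are tied to the tenth-order mock theta functions, as the paper notes in its conclusion. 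It therefore admits no expression as a finite sum of theta quotients, so the ``error terms'' you hope to arrange to cancel are not theta quotients at all, and the bookkeeping you flag as the main obstacle is not merely delicate but impossible in the form you describe. The ingredient you are missing is the change-of-$z$ identity for Appell--Lerch sums (Lemma 11.3.4 of [5]),
\[
m(x,q,z_1)-m(x,q,z_0)=\frac{z_0(q;q)_{\infty}^3\,j(z_1/z_0;q)\,j(xz_0z_1;q)}{j(z_0;q)\,j(z_1;q)\,j(xz_0;q)\,j(xz_1;q)},
\]
which evaluates exactly the difference and nothing less.

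The paper's proof is a direct application of this identity: the left side of (10) is rewritten as $(q,q^4,q^5;q^5)_{\infty}\bigl[m(q^2,q^5,1/q)-m(q^2,q^5,1/q^4)\bigr]$, the identity is applied with $x=q^2$, $z_1=1/q$, $z_0=1/q^4$, $q\mapsto q^5$, and the resulting theta quotient simplifies to $(q^5;q^5)_{\infty}^2\,G^2(q)/H(q)$; equation (13) is handled identically with $x=q$, $z_1=1/q^3$, $z_0=1/q^2$. Your fallback via Watson's ${}_8\phi_7\to{}_4\phi_3$ transformation is too underdeveloped to assess, and your reindex-then-apply-Atkin--Swinnerton-Dyer plan would at best have to rediscover the displayed formula; as written, the proposal does not constitute a proof.
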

\begin{proof}
We start with the Apple-Lerch series $m(x,q,z)=-\displaystyle\frac{z}{j(z;q)}\displaystyle\sum_{r}\displaystyle\frac{(-1)^rq^{r(r+1)/2}z^r}{1-xzq^r}$ with $j(z;q)=(z,q/z,q;q)_{\infty}.$ \\ \\
Then $m(q^2;q^5;1/q)=-\displaystyle\frac{1}{qj(1/q;q^5)}\displaystyle\sum_{m}\displaystyle\frac{(-1)^mq^{5m(m+1)/2}q^{-m}}{1-q^{5m+1}}=-\displaystyle\frac{1}{qj(1/q;q^5)}S_1$ \\ \\
where $S_1=\displaystyle\sum_{m}\displaystyle\frac{(-1)^mq^{(5m^2+3m)/2}}{1-q^{5m+1}}.$ \\ \\
Similarly, $m(q^2;q^5;1/q^4)=\displaystyle\frac{1}{q^4j(1/q^4;q^5)}S_2$ where $S_2=-\displaystyle\sum_{m}\displaystyle\frac{(-1)^mq^{(5m^2-3m)/2}}{1-q^{5m-2}}.$\\ \\
The left side of (10) is therefore 
\begin{multline}
S_1+S_2=-qj(1/q;q^5)m(q^2,q^5,1/q)+q^4j(1/q^4;q^5)m(q^2,q^5,1/q^4)\\ \\
=(q,q^4,q^5;q^5)_{\infty}\Bigg[m(q^2,q^5,1/q)-m(q^2,q^5,1/q^4)\Bigg]
\end{multline}
as $j(1/q;q^5)=(1/q,q^6,q^5;q^5)_{\infty}=-\displaystyle\frac{1}{q}(q,q^4,q^5;q^5)_{\infty}$ and $j(1/q^4;q^5)=(1/q^4,q^9,q^5;q^5)_{\infty}\\ \\=-\displaystyle\frac{1}{q^4}(q,q^4,q^5;q^5)_{\infty}.$ \\ \\
By lemma 11.3.4 in [5], 
\begin{equation*}
    m(x,q,z_1)-m(x,q,z_0)=\displaystyle\frac{z_0(q;q)_{\infty}^3j(z_1/z_0;q)j(xz_0z_1;q)}{j(z_0;q)j(z_1;q)j(xz_0;q)j(xz_1;q)}
\end{equation*}
whence, setting $x=q^2,z_1=\displaystyle\frac{1}{q},z_0=\displaystyle\frac{1}{q^4},q=q^5,$
\begin{equation*}
    m(q^2,q^5,1/q)-m(q^2,q^5,1/q^4)=\displaystyle\frac{\displaystyle\frac{1}{q^4}(q^5;q^5)_{\infty}^3j(q^3;q^5)j(1/q^3;q^5)}{j(1/q^4;q^5)j(1/q;q^5)j(1/q^2;q^5)j(q;q^5)}
\end{equation*}
\begin{equation*}
    =\displaystyle\frac{(q^2,q^3,q^5;q^5)_{\infty}}{(q,q,q,q^4,q^4,q^4;q^5)_{\infty}}
\end{equation*}
By (22), the proof of equation (10) is now finished. \\ \\
For equation (13), we note similarly that $m(q;q^5;1/q^3)=-\displaystyle\frac{1}{q^3j(1/q^3;q^5)}\displaystyle\sum_{m}\displaystyle\frac{(-1)^mq^{(5m^2-m)/2}}{1-q^{5m-2}}$ and $m(q;q^5;1/q^2)=-\displaystyle\frac{1}{q^2j(1/q^2;q^5)}\displaystyle\sum_{m}\displaystyle\frac{(-1)^mq^{(5m^2+m)/2}}{1-q^{5m-1}}$ whence the left side of (13) reduces to
\begin{multline}
-q^2j(1/q^3;q^5)m(q,q^5,1/q^3)+qj(1/q^2;q^5)m(q,q^5,1/q^2)\\ \\
=\displaystyle\frac{1}{q}(q^2,q^3;q^5)_{\infty}\Bigg[m(q,q^5,1/q^3)-m(q,q^5,1/q^2)\Bigg]
\end{multline}
as $j(1/q^2;q^5)=-\displaystyle\frac{1}{q^2}(q^2,q^3,q^5;q^5)_{\infty},j(1/q^3;q^5)=-\displaystyle\frac{1}{q^3}(q^2,q^3,q^5;q^5)_{\infty}.$ \\ \\
By 11.3.4 in [5], we check as before that (setting $x=q,z_1=\displaystyle\frac{1}{q^3},z_0=\displaystyle\frac{1}{q^4},q=q^5)$,
\begin{equation*}
    m(q,q^5,1/q^3)-m(q,q^5,1/q^2)=\displaystyle\frac{q(q,q^4,q^5;q^5)_{\infty}}{(q^2,q^2,q^2,q^3,q^3,q^3;q^5)_{\infty}}
\end{equation*}
whence by (23), equation (13) is proved. 
\end{proof}

The only equation left to be proven to establish the validity of Conjecture 4.1 and hence of Theorem 1.3 is equation (14) in the statement of Theorem 4.2. We haven't been able to prove this, but the following theorem gives three equivalent versions of this equation.
\begin{theorem}
If (A) is the statement \\ \\
\begin{equation*}
    -\displaystyle\sum_{m}' (-1)^m\displaystyle\frac{q^{\frac{5m^2+m-2}{2}}}{1-q^{5m}}+\displaystyle\sum_{m}(-1)^m\displaystyle\frac{q^{\frac{5m^2-m-2}{2}}}{1-q^{5m-1}}+2\displaystyle\sum_{m}'(-1)^m\displaystyle\frac{q^{\frac{5m^2+3m-2}{2}}}{1-q^{5m}}+
\end{equation*}
\begin{equation*}
2\displaystyle\sum_{m}(-1)^m\displaystyle\frac{q^{\frac{5m^2+7m}{2}}}{1-q^{5m+2}}=\\ \\
    (q^5;q^5)_{\infty}^2\displaystyle\frac{H^3(q)}{G^2(q)},
\end{equation*}
(B) is the statement 
\begin{equation*}
    \displaystyle\frac{1}{q^2}\displaystyle\sum_{j=0}^{\infty}\Bigg(\displaystyle\frac{q^{5j+2}}{1-q^{5j+1}}-\displaystyle\frac{q^{5j+5}}{1-q^{5j+4}}-3\displaystyle\frac{q^{5j+3}}{1-q^{5j+2}}+3\displaystyle\frac{q^{5j+4}}{1-q^{5j+3}}\Bigg)=(q^5;q^5)_{\infty}^2\displaystyle\frac{H^3(q)}{G^2(q)}
\end{equation*}
and (C) is the statement 
\begin{equation*}
    \displaystyle\sum_{m}\displaystyle\frac{q^m(1-q^{m+1})^3}{1-q^{5m+5}}=(q^5;q^5)_{\infty}^2\displaystyle\frac{H^3(q)}{G^2(q)}
\end{equation*}
then (A), (B) and (C) are equivalent.
\end{theorem}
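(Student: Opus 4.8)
The plan is to exploit the fact that the right-hand sides of (A), (B) and (C) are literally identical, namely $(q^5;q^5)_{\infty}^2 H^3(q)/G^2(q)$. The content of the theorem is therefore entirely independent of that common right-hand side: the claim ``(A), (B) and (C) are equivalent'' is exactly the claim that their three left-hand sides agree as formal Laurent series in $q$. Writing $L_A$, $L_B$, $L_C$ for the respective left-hand sides, I would prove the theorem by establishing $L_B=L_C$ and $L_A=L_C$. The organizing device is that each $L_{\bullet}$ can be collapsed onto a single Lambert-type series $q^{-1}\sum_{k\ge 1}\chi(k)\,\frac{q^{k}}{1-q^{k}}$, where $\chi$ is the period-$5$ function with $\chi(k)=1,-3,3,-1,0$ according as $k\equiv 1,2,3,4,0\pmod 5$; once all three left-hand sides are brought to a common such form, the equalities follow by inspection.

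\emph{The routine reductions.} For $L_B$ I would reindex the four inner sums by $k=5j+c$ with $c=1,4,2,3$ respectively. In each term the numerator exponent exceeds the denominator exponent by exactly $1$, so $\frac{q^{5j+c+1}}{1-q^{5j+c}}=q\,\frac{q^{k}}{1-q^{k}}$, and collecting the four residue classes with signs $+1,-1,-3,+3$ turns the bracket into $q\sum_{k\ge 1}\chi(k)\frac{q^{k}}{1-q^{k}}$; the outer factor $q^{-2}$ then produces the canonical form. For $L_C$ I would first expand $(1-q^{m+1})^{3}=1-3q^{m+1}+3q^{2m+2}-q^{3m+3}$ and exploit the reflection symmetry of the summand: setting $n=m+1$, the summand $\frac{q^{n-1}(1-q^{n})^{3}}{1-q^{5n}}$ is invariant under $n\mapsto -n$, with the self-paired index $n=0$ contributing $0$ by the usual $0/0$ limiting value, so the bilateral sum folds onto a one-sided range. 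Each resulting Lambert piece $\sum\frac{q^{am+b}}{1-q^{5m+5}}$ is then expanded via $\frac{1}{1-q^{5m+5}}=\sum_{\ell\ge 0}q^{(5m+5)\ell}$ and reindexed to the same canonical series.

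\emph{The hard reduction.} The four sums constituting $L_A$ are level-$5$ Appell--Lerch/Lambert sums carrying Gaussian exponents $\tfrac{5m^{2}+\cdots}{2}$, a sign $(-1)^{m}$, and denominators $1-q^{5m+c}$ with $c\in\{0,-1,2\}$. I would expand each denominator as a geometric series, one-sided when $c\neq 0$ (where $5m+c$ keeps a fixed sign) but split according to $m>0$ and $m<0$ for the two sums with denominator $1-q^{5m}$ (the primes removing precisely the offending $m=0$ term). This yields double sums $\sum_{m,\ell}(\pm1)(-1)^{m}q^{Q(m)+(5m+c)\ell}$ with $Q$ quadratic; completing the square in $m$, reindexing, and using $m\mapsto -m$ should collapse the $m$-summation onto the canonical Lambert series. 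A cleaner alternative, which I expect to be the better route, is to recall that $L_A$ is by construction the bracket multiplying $q^{5}$ in the mod-$5$ dissection (20) of $\sum_{n\ge 1}\frac{(-1)^{n}q^{\binom{n+1}{2}}(1-q^{n})^{3}(1+q^{n})}{1-q^{5n}}$; so one may instead take the folded one-sided form of $L_C$, apply the substitution $n=5m+t$ exactly as in the proof of Theorem 4.2, and verify that its $q^{5}$-component reproduces the four sums of $L_A$.

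\emph{The obstacle.} The real difficulty is concentrated in the $L_A$ reduction: faithfully tracking the five denominator residues, the alternating sign $(-1)^{m}$, the half-integral-looking Gaussian exponents, and --- most delicately --- the convergence-region split (the $m>0$ versus $m<0$ expansions of $1/(1-q^{5m})$ together with the primed omission of $m=0$), so that no spurious term is gained or lost when the bilateral sums are unfolded. By contrast $L_B=L_C$ is a routine matter of reindexing modulo $5$ and expanding geometric series, with the reflection symmetry of the summand of $L_C$ as the only structural input; hence the substance of the theorem lies entirely in matching $L_A$ to the common canonical series.
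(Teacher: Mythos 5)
Your reading of the theorem is correct --- the common right-hand side is irrelevant and the task is to show the three left-hand sides agree --- and your treatment of $L_B=L_C$ matches the paper's (the paper uses the interchange $\sum_{j\ge0}q^{5j}/(1-q^{5j+A})=\sum_{m\ge0}q^{Am}/(1-q^{5m+5})$, which is exactly your geometric-series reindexing). But the hard step, $L_A=L_B$, is not actually carried out, and neither of your two suggested routes would succeed as described. Route 1 (expand each $1/(1-q^{5m+c})$ geometrically, complete the square, reflect $m\mapsto-m$) does not linearize anything: for fixed $\ell$ the inner sum is of the form $\sum_m(-1)^mq^{(5m^2+(a+10\ell)m)/2+c\ell}$, a theta-type sum whose argument shifts with $\ell$, and completing the square does not turn the resulting double sum into a Lambert series; you would in effect be forced to rediscover the Appell--Lerch transformation theory from scratch. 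Route 2 is circular or confused: the $5$-dissection that produces $L_A$ is a dissection of $\sum_{n\ge1}(-1)^nq^{\binom{n+1}{2}}(1-q^n)^3(1+q^n)/(1-q^{5n})$, not of $L_C$; dissecting $L_C$ (which carries no sign $(-1)^m$ and no Gaussian exponents) by $n=5m+t$ yields nothing resembling the four sums of $L_A$, and asserting that it does amounts to assuming $L_A=L_C$.

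The paper's actual mechanism, which your proposal is missing, is to recognize $T_2$ and $T_4$ as genuine Appell--Lerch sums $m(x,q^5,z)$ for suitable $x,z$, and the primed sums $T_1,T_3$ as limits such as $\lim_{z\to q^{-2}}\bigl(m(q^2,q^5,z)+\tfrac{z}{j(z;q^5)(1-q^2z)}\bigr)$, where the added term cancels the pole created by the omitted $r=0$ term. Grouping $L_A=(-T_1+T_2)+2(T_3+T_4)$, the change-of-$z$ formula $m(x,q,z_1)-m(x,q,z_0)=z_0(q;q)_\infty^3\,j(z_1/z_0;q)j(xz_0z_1;q)/\bigl(j(z_0;q)j(z_1;q)j(xz_0;q)j(xz_1;q)\bigr)$ (Lemma 11.3.4 of [5]) converts each group into an explicit theta quotient with a removable singularity at $z=q^{-2}$ (resp.\ $z=q^{-1}$), and L'Hospital's rule --- i.e.\ differentiating the infinite products term by term --- is what actually produces the Lambert series of $L_B$. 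Without this ingredient the equivalence of (A) with (B) and (C) is not established. A smaller point: your reflection $n\mapsto-n$ applied to $L_C$ shows the bilateral sum equals \emph{twice} the one-sided sum over $m\ge0$, not the one-sided sum itself; the paper's derivation lands on the one-sided sum, so you should either flag the factor of $2$ or read the sum in (C) as one-sided.
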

\begin{proof}
We will prove Theorem 4.5 by reducing the left side of (A) to the left side of (B) and the left side of (B) to the left side of (C). \\ \\
Let $T_1=\displaystyle\sum_{m}' (-1)^m\displaystyle\frac{q^{\frac{5m^2+m-2}{2}}}{1-q^{5m}},T_2=\displaystyle\sum_{m}(-1)^m\displaystyle\frac{q^{\frac{5m^2-m-2}{2}}}{1-q^{5m-1}},T_3=\displaystyle\sum_{m}'(-1)^m\displaystyle\frac{q^{\frac{5m^2+3m-2}{2}}}{1-q^{5m}}$\\ \\
$T_4=\displaystyle\sum_{m}(-1)^m\displaystyle\frac{q^{\frac{5m^2+7m}{2}}}{1-q^{5m+2}}.$ \\ \\
Then since $m(q^2,q^5,\displaystyle\frac{1}{q^3})=-\displaystyle\frac{1}{q^3j(1/q^3;q^5)}\displaystyle\sum_{m}\displaystyle\frac{(-1)^mq^{(5m^2-m)/2}}{1-q^{5m-1}},$ we get $T_2=-q^2j(1/q^3;q^5)m(q^2,q^5,\displaystyle\frac{1}{q^3}).$ Similarly, $T_4=-\displaystyle\frac{j(q;q^5)}{q}m(q,q^5,q).$
Unfortunately, $T_1$ and $T_3$ are not proper Apple-Lerch sums since they skip the $m=0$ term. \\ \\
Let $m'(x,q,z)=-\displaystyle\frac{z}{j(z;q)}\displaystyle\sum_{r}'\displaystyle\frac{(-1)^rq^{r(r+1)/2}z^r}{1-xzq^r},$ so $m'(q^2,q^5,\displaystyle\frac{1}{q^2})=-\displaystyle\frac{1}{q^2j(1/q^2;q^5)}\displaystyle\sum_{m}'\displaystyle\frac{(-1)^mq^{(5m^2+m)/2}}{1-q^{5m}}$ and hence, $T_1=-qj(1/q^2;q^5)m'(q^2,q^5,\displaystyle\frac{1}{q^2})=-qj(1/q^2;q^5)\Bigg(\lim_{z\to q^{-2}}m(q^2,q^5,z)+\displaystyle\frac{z}{j(z;q^5)}.\displaystyle\frac{1}{1-q^2z}\Bigg).$ \\ \\
In a similar fashion, we may verify that $T_3=-j(1/q;q^5)\Bigg(\lim_{z\to q^{-1}}m(q,q^5,z)+\displaystyle\frac{z}{j(z;q^5)}.\displaystyle\frac{1}{1-qz}\Bigg).$ \\ \\
Thus the left side of (A)$=(-T_1+T_2)+2(T_3+T_4)$
 \begin{align*}
     =\Bigg[qj(1/q^2;q^5)\Bigg(\lim_{z\to q^{-2}}m(q^2,q^5,z)+\displaystyle\frac{z}{j(z;q^5)}.\displaystyle\frac{1}{1-q^2z}\Bigg)-q^2j(1/q^3;q^5)m(q^2,q^5,\displaystyle\frac{1}{q^3})\Bigg] \\ \\
     +2\Bigg[-j(1/q;q^5)\Bigg(\lim_{z\to q^{-1}}m(q,q^5,z)+\displaystyle\frac{z}{j(z;q^5)}.\displaystyle\frac{1}{1-qz}\Bigg)-\displaystyle\frac{j(q;q^5)}{q}m(q,q^5,q)\Bigg] \\ \\
     =X+2Y \text{   ,say.}
 \end{align*}
 Thus,
 \begin{multline*}
     X=q(\displaystyle\frac{1}{q^2},q^7,q^5;q^5)_{\infty}\Bigg(\lim_{z\to q^{-2}}m(q^2,q^5,z)+\displaystyle\frac{z}{j(z;q^5)}.\displaystyle\frac{1}{1-q^2z}\Bigg)-q^2(\displaystyle\frac{1}{q^3},q^8,q^5;q^5)_{\infty}m(q^2,q^5,\displaystyle\frac{1}{q^3}) \\ \\
     =\displaystyle\frac{1}{q}(q^2,q^3,q^5;q^5)_{\infty}\lim_{z\to q^{-2}}\Bigg[m(q^2,q^5,\displaystyle\frac{1}{q^3})-m(q^2,q^5,z)-\displaystyle\frac{z}{j(z;q^5)}\displaystyle\frac{1}{1-q^2z}\Bigg]
 \end{multline*}
 But, by 11.3.4 of [5], we can check that
 \begin{equation*}
 m(q^2,q^5,\displaystyle\frac{1}{q^3})-m(q^2,q^5,z)=\displaystyle\frac{z(q^5;q^5)_{\infty}^3j(\displaystyle\frac{1}{q^3z};q^5)j(\displaystyle\frac{z}{q};q^5)}{j(z;q^5)j(\displaystyle\frac{1}{q^3};q^5)j(q^2z;q^5)j(\displaystyle\frac{1}{q};q^5)} =\displaystyle\frac{z(q^5,\displaystyle\frac{1}{q^3z},q^8z,\displaystyle\frac{z}{q},\displaystyle\frac{q^6}{z};q^5)_{\infty}}{(z\displaystyle\frac{q^5}{z},\displaystyle\frac{1}{q^3},q^8,q^2z,\displaystyle\frac{q^3}{z},\displaystyle\frac{1}{q},q^6;q^5)_{\infty}}
 \end{equation*}
 so that 
 \begin{multline*}
     X=\displaystyle\frac{1}{q}(q^2,q^3,q^5;q^5)_{\infty}\lim_{z\to q^{-2}}\Bigg[\displaystyle\frac{z(q^5,\displaystyle\frac{1}{q^3z},q^8z,\displaystyle\frac{z}{q},\displaystyle\frac{q^6}{z};q^5)_{\infty}}{(z,\displaystyle\frac{q^5}{z},\displaystyle\frac{1}{q^3},q^8,q^2z,\displaystyle\frac{q^3}{z},\displaystyle\frac{1}{q},q^6;q^5)_{\infty}}-\displaystyle\frac{z}{j(z;q^5)(1-q^2z)}\Bigg]\\ \\
     =\displaystyle\frac{1}{q}(q^2,q^3,q^5;q^5)_{\infty}\lim_{z\to q^{-2}}\Bigg[\displaystyle\frac{z}{(z,\displaystyle\frac{q^5}{z},\displaystyle\frac{1}{q^3},q^8,q^7z,\displaystyle\frac{q^3}{z},\displaystyle\frac{1}{q},q^6,q^5;q^5)_{\infty}}\\ 
     \times\displaystyle\frac{(q^5,q^5,\displaystyle\frac{1}{q^3z},q^8z,\displaystyle\frac{z}{q},\displaystyle\frac{q^6}{z};q^5)_{\infty}-(\displaystyle\frac{1}{q^3},q^8,q^7z,\displaystyle\frac{q^3}{z},\displaystyle\frac{1}{q},q^6;q^5)_{\infty}}{(1-q^2z)}\Bigg] \\ \\
     =\displaystyle\frac{1}{q}(q^2,q^3,q^5;q^5)_{\infty}\displaystyle\frac{1/q^2}{(\displaystyle\frac{1}{q^2},q^7,\displaystyle\frac{1}{q^3},q^8,q^5,q^5,\displaystyle\frac{1}{q},q^6,q^5;q^5)_{\infty}} \\ \\
     \times \lim_{z\to q^{-1}}\displaystyle\frac{(q^5,q^5,\displaystyle\frac{1}{q^2z},q^7z,\displaystyle\frac{z}{q^2},\displaystyle\frac{q^7}{z};q^5)_{\infty}-(\displaystyle\frac{1}{q^3},q^8,q^6z,\displaystyle\frac{q^4}{z},\displaystyle\frac{1}{q},q^6;q^5)_{\infty}}{(1-qz)} \\ \\
     =-q^3\displaystyle\frac{1}{(q,q^2,q^3,q^4,q^5,q^5;q^5)_{\infty}}
     \times \lim_{z\to q^{-1}}\displaystyle\frac{(q^5,q^5,\displaystyle\frac{1}{q^2z},q^7z,\displaystyle\frac{z}{q^2},\displaystyle\frac{q^7}{z};q^5)_{\infty}-(\displaystyle\frac{1}{q^3},q^8,q^6z,\displaystyle\frac{q^4}{z},\displaystyle\frac{1}{q},q^6;q^5)_{\infty}}{(1-qz)}
 \end{multline*}
 Suppose, $f(z)=(q^5,q^5,\displaystyle\frac{1}{q^2z},q^7z,\displaystyle\frac{z}{q^2},\displaystyle\frac{q^7}{z};q^5)_{\infty}.$ Then 
 \begin{multline*}
 f'(z)=(q^5;q^5)_{\infty}^2\Bigg[(q^7z,\displaystyle\frac{z}{q^2},\displaystyle\frac{q^7}{z};q^5)_{\infty}\displaystyle\sum_{j=0}^{\infty}\Bigg(1-\displaystyle\frac{q^{5j}}{q^2z}\Bigg)'\displaystyle\prod_{\substack{n=0\\n\neq j}}^{\infty}\Bigg(1-\displaystyle\frac{q^{5n}}{q^2z}\Bigg) \\ 
 +(\displaystyle\frac{1}{q^2z},\displaystyle\frac{z}{q^2},\displaystyle\frac{q^7}{z};q^5)_{\infty}\displaystyle\sum_{j=0}^{\infty}\Bigg(1-(q^7z)q^{5j}\Bigg)'\displaystyle\prod_{\substack{n=0\\n\neq j}}^{\infty}\Bigg(1-(q^{5n}.q^7z)\Bigg) \\ 
 +(\displaystyle\frac{1}{q^2z},q^7z,\displaystyle\frac{q^7}{z};q^5)_{\infty}\displaystyle\sum_{j=0}^{\infty}\Bigg(1-\displaystyle\frac{z}{q^2}.q^{5j}\Bigg)'\displaystyle\prod_{\substack{n=0\\n\neq j}}^{\infty}\Bigg(1-q^{5n}.\displaystyle\frac{z}{q^2}\Bigg)\Bigg] \\ 
 +(\displaystyle\frac{1}{q^2z},q^7z,\displaystyle\frac{z}{q^2};q^5)_{\infty}\displaystyle\sum_{j=0}^{\infty}\Bigg(1-\displaystyle\frac{q^7}{z}.q^{5j}\Bigg)'\displaystyle\prod_{\substack{n=0\\n\neq j}}^{\infty}\Bigg(1-q^{5n}.\displaystyle\frac{q^7}{z}\Bigg)
 \end{multline*}
 Thus $f'(\displaystyle\frac{1}{q})=(q^5;q^5)_{\infty}^2(\displaystyle\frac{1}{q},q^6,\displaystyle\frac{1}{q^3},q^8;q^5)_{\infty}$ $\displaystyle\sum_{j=0}^{\infty}\Bigg(\displaystyle\frac{\displaystyle\frac{q^{5j-2}}{z^2}}{1-\displaystyle\frac{q^{5j-2}}{z}}+\displaystyle\frac{-q^{5j+7}}{1-q^{5j+7}z}+\displaystyle\frac{-q^{5j-2}}{1-q^{5j-2}z}+\displaystyle\frac{\displaystyle\frac{q^{5j+7}}{z^2}}{1-\displaystyle\frac{q^{5j+7}}{z}}\Bigg)_{z=1/q}$ \\ \\
 $=\displaystyle\frac{(q^5;q^5)_{\infty}^2}{q^4}(q,q^2,q^3,q^4;q^5)_{\infty}\displaystyle\sum_{j=0}^{\infty}\Bigg(\displaystyle\frac{q^{5j}}{1-q^{5j-1}}-\displaystyle\frac{q^{5j+7}}{1-q^{5j+6}}-\displaystyle\frac{q^{5j-2}}{1-q^{5j-3}}+\displaystyle\frac{q^{5j+9}}{1-q^{5j+8}}\Bigg).$ \\ \\
 And if $g(z)=(\displaystyle\frac{1}{q^3},q^8,\displaystyle\frac{1}{q},q^6;q^5)_{\infty}(q^6z,\displaystyle\frac{q^4}{z};q^5)_{\infty},$ then we can similarly check that $g'(\displaystyle\frac{1}{q})=0.$ \\ \\
 Thus, by L'Hospitals rule, \\ \\ $X=\displaystyle\frac{1}{q^2}\displaystyle\sum_{j=0}^{\infty}\Bigg(\displaystyle\frac{q^{5j}}{1-q^{5j-1}}-\displaystyle\frac{q^{5j+7}}{1-q^{5j+6}}-\displaystyle\frac{q^{5j-2}}{1-q^{5j-3}}+\displaystyle\frac{q^{5j+9}}{1-q^{5j+8}}\Bigg).$ \\ \\
 Similarly, 
 \begin{multline*}
     Y=-(\displaystyle\frac{1}{q},q^6,q^5;q^5)_{\infty}\Bigg(\lim_{z\to q^{-1}}m(q,q^5,z)+\displaystyle\frac{z}{j(z;q^5)}.\displaystyle\frac{1}{1-qz}\Bigg)-\displaystyle\frac{(q,q^4,q^5;q^5)_{\infty}}{q}m(q,q^5,q) \\ \\
     =-\displaystyle\frac{1}{q}(q,q^4,q^5;q^5)_{\infty}\lim_{z\to q^{-1}}\Bigg[m(q,q^5,q)-m(q^2,q^5,z)-\displaystyle\frac{z}{j(z;q^5)}\displaystyle\frac{1}{1-qz}\Bigg]
 \end{multline*}
 Exactly as before, we can check that $m(q,q^5;q)-m(q,q^5,z)=\displaystyle\frac{z(\displaystyle\frac{q}{z},q^4z,q^2z,\displaystyle\frac{q^3}{z},q^5;q^5)_{\infty}}{(z,\displaystyle\frac{q^5}{z},q,q^4,\displaystyle\frac{q^4}{z},qz,q^2,q^3;q^5)_{\infty}}$ so that 
 \begin{multline*}
 Y=-\displaystyle\frac{1}{q}(q,q^4,q^5;q^5)_{\infty}\lim_{z\to q^{-1}}\Bigg[\displaystyle\frac{z(\displaystyle\frac{q}{z},q^4z,q^2z,\displaystyle\frac{q^3}{z},q^5;q^5)_{\infty}}{(z,\displaystyle\frac{q^5}{z},q,q^4,\displaystyle\frac{q^4}{z},qz,q^2,q^3;q^5)_{\infty}}-\displaystyle\frac{z}{j(z;q^5)(1-qz)}\Bigg]\\ \\
 =-\displaystyle\frac{1}{q}(q,q^4,q^5;q^5)_{\infty}\lim_{z\to q^{-1}}\Bigg[\displaystyle\frac{z}{(z,\displaystyle\frac{q^5}{z},q,q^4,\displaystyle\frac{q^4}{z},q^6z,q^2,q^3,q^5;q^5)_{\infty}} \\ \\
\times\displaystyle\frac{(\displaystyle\frac{q}{z},q^4z,q^2z,\displaystyle\frac{q^3}{z},q^5,q^5;q^5)_{\infty}-(q,q^4,\displaystyle\frac{q^4}{z},q^6z,q^2,q^3;q^5)_{\infty}}{(1-qz)}\Bigg] \\ \\
=\displaystyle\frac{1}{q(q,q^2,q^3,q^4,q^5,q^5;q^5)_{\infty}}\times \lim_{z\to q^{-1}}\displaystyle\frac{(\displaystyle\frac{q}{z},q^4z,q^2z,\displaystyle\frac{q^3}{z},q^5,q^5;q^5)_{\infty}-(q,q^4,\displaystyle\frac{q^4}{z},q^6z,q^2,q^3;q^5)_{\infty}}{(1-qz)}
\end{multline*}
 after simplification as before. \\ \\
As before, if $h(z)=(\displaystyle\frac{q}{z},q^4z,q^2z,\displaystyle\frac{q^3}{z},q^5,q^5;q^5)_{\infty},$ then \\ \\
$h'(\displaystyle\frac{1}{q})=(q^5;q^5)_{\infty}^2(q,q^2,q^3,q^4;q^5)_{\infty}\displaystyle\sum_{j=0}^{\infty}\Bigg(\displaystyle\frac{q^{5j+3}}{1-q^{5j+2}}-\displaystyle\frac{q^{5j+4}}{1-q^{5j+3}}-\displaystyle\frac{q^{5j+2}}{1-q^{5j+1}}+\displaystyle\frac{q^{5j+5}}{1-q^{5j+4}}\Bigg)$ \\ \\
and if $p(z)=(q,q^4,\displaystyle\frac{q^4}{z},q^6z,q^2,q^3;q^5)_{\infty},$ then $p'(\displaystyle\frac{1}{q})=0$ \\ \\
so that $Y=-\displaystyle\frac{1}{q^2}\displaystyle\sum_{j=0}^{\infty}\Bigg(\displaystyle\frac{q^{5j+3}}{1-q^{5j+2}}-\displaystyle\frac{q^{5j+4}}{1-q^{5j+3}}-\displaystyle\frac{q^{5j+2}}{1-q^{5j+1}}+\displaystyle\frac{q^{5j+5}}{1-q^{5j+4}}\Bigg).$ \\ \\
Thus, the left side of (A)=$X+2Y$
\begin{multline*}
    =\displaystyle\frac{1}{q^2}\displaystyle\sum_{j=0}^{\infty}\Bigg(\displaystyle\frac{q^{5j}}{1-q^{5j-1}}-\displaystyle\frac{q^{5j+7}}{1-q^{5j+6}}-\displaystyle\frac{q^{5j-2}}{1-q^{5j-3}}+\displaystyle\frac{q^{5j+9}}{1-q^{5j+8}}\Bigg)\\ \\
    -\displaystyle\frac{2}{q^2}\displaystyle\sum_{j=0}^{\infty}\Bigg(\displaystyle\frac{q^{5j+3}}{1-q^{5j+2}}-\displaystyle\frac{q^{5j+4}}{1-q^{5j+3}}-\displaystyle\frac{q^{5j+2}}{1-q^{5j+1}}+\displaystyle\frac{q^{5j+5}}{1-q^{5j+4}}\Bigg) \\ \\
    =\displaystyle\frac{1}{q^2}\displaystyle\sum_{j=0}^{\infty}\Bigg(\displaystyle\frac{q^{5j+5}}{1-q^{5j+4}}-\displaystyle\frac{q^{5j+2}}{1-q^{5j+1}}-\displaystyle\frac{q^{5j+3}}{1-q^{5j+2}}+\displaystyle\frac{q^{5j+4}}{1-q^{5j+3}}\Bigg)+\displaystyle\frac{1}{q^2}\Bigg(\displaystyle\frac{1}{1-1/q}+\displaystyle\frac{q^2}{1-q}-\displaystyle\frac{1/q^2}{1-1/q^3}-\displaystyle\frac{q^4}{1-q^3}\Bigg) \\ \\
    -\displaystyle\frac{2}{q^2}\displaystyle\sum_{j=0}^{\infty}\Bigg(\displaystyle\frac{q^{5j+3}}{1-q^{5j+2}}-\displaystyle\frac{q^{5j+4}}{1-q^{5j+3}}-\displaystyle\frac{q^{5j+2}}{1-q^{5j+1}}+\displaystyle\frac{q^{5j+5}}{1-q^{5j+4}}\Bigg)\\ \\
    =\displaystyle\frac{1}{q^2}\displaystyle\sum_{j=0}^{\infty}\Bigg(\displaystyle\frac{q^{5j+2}}{1-q^{5j+1}}-\displaystyle\frac{q^{5j+5}}{1-q^{5j+4}}-3\displaystyle\frac{q^{5j+3}}{1-q^{5j+2}}+3\displaystyle\frac{q^{5j+4}}{1-q^{5j+3}}\Bigg)
\end{multline*}
which is the left side of (B). This shows that (B) implies (A).\\ \\
Next we note that $\displaystyle\sum_{j=0}^{\infty}\displaystyle\frac{q^{5j}}{1-q^{5j+A}}=\displaystyle\sum_{j,m\geq 0}q^{5j+5jm+Am}=\displaystyle\sum_{m=0}^{\infty}\displaystyle\frac{q^{Am}}{1-q^{5m+5}}.$ \\ \\
Thus the left side of (B) equals $\displaystyle\sum_{m\geq 0}\displaystyle\frac{q^m-q^3.q^{4m}-3q.q^{2m}+3q^2.q^{3m}}{1-q^{5m+5}}=\displaystyle\sum_{m\geq 0}\displaystyle\frac{q^m(1-q^{m+1})^3}{1-q^{5m+5}}$\\ \\
which shows that (B) implies (C) and the proof of Theorem 4.5 is complete.
\end{proof}
\section{The Decomposition modulo 7}
In this section, we will study Theorem 1.4 by again connecting it to the proof of certain equalities we obtain as in the last section. We start with the following conjecture.
\begin{conjecture}
$\displaystyle\frac{1}{(q;q)_{\infty}}\displaystyle\sum_{n\geq 1}\displaystyle\frac{(-1)^nq^{n(n+1)/2}(1-q^n)^5(1+q^n)}{1-q^{7n}}=-qA+3q^2B-2q^3C+q^4D-3q^6E$
\end{conjecture}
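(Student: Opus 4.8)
The plan is to transplant the argument of Section 4 to the modulus $7$. First I would clear the factor $\frac{1}{(q;q)_\infty}$ by multiplying through by $(q;q)_\infty$ and inserting its $7$-dissection, the modulus-$7$ counterpart of Lemma 6 of [1] that was used for Conjecture 4.1. Since the generalized pentagonal numbers occupy only the residues $0,1,2,5\pmod 7$, this dissection is a four-term expression $(q;q)_\infty=(q^{49};q^{49})_\infty\big[\cdots\big]$, each bracketed piece being a ratio of the septic products $A,B,C,D,E$ evaluated at $q^7$. Substituting it turns the right-hand side of Conjecture 5.1 into $(q^{49};q^{49})_\infty^2$ times an explicit combination of these products attached to the powers $q^1,\dots,q^7$, precisely mirroring the role of (16) in the modulo-$5$ proof.

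Next I would rewrite the left-hand side as a single bilateral Lambert-type sum. Expanding $(1-q^n)^5(1+q^n)=1-4q^n+5q^{2n}-5q^{4n}+4q^{5n}-q^{6n}$ and using the antisymmetry of these coefficients about the middle term, so that the $q^{kn}$ and $q^{(6-k)n}$ contributions coalesce under $n\mapsto -n$ exactly as in the passage to (17), one finds
\[
\sum_{n\ge 1}\frac{(-1)^nq^{n(n+1)/2}(1-q^n)^5(1+q^n)}{1-q^{7n}}=\sum_{n}'\frac{(-1)^nq^{n(n+1)/2}}{1-q^{7n}}-4\sum_{n}'\frac{(-1)^nq^{n(n+3)/2}}{1-q^{7n}}+5\sum_{n}'\frac{(-1)^nq^{n(n+5)/2}}{1-q^{7n}}.
\]
Writing $n=7m+t$ with $t=-3,\dots,3$ and sorting the resulting sums by the residue of $n$ modulo $7$ then organizes the whole expression as $q^1[\cdots]+\cdots+q^7[\cdots]$, the septic analogue of (18)--(20). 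Comparing the coefficients of $q^1,\dots,q^7$ and replacing $q$ by $q^{1/7}$ reduces Conjecture 5.1 to seven explicit identities between bilateral Appell-Lerch sums and septic products, the modulus-$7$ version of Theorem 4.2's reduction to (i)--(v).

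To dispatch these identities I would reuse the two engines already developed. Those in which a single bilateral sum $\sum_m(-1)^mq^{(7m^2+7m)/2}/(1-q^{7m+j})$ must equal a product should follow from the very-well-poised ${}_6\phi_5$ summation (21), specialized with $q\to q^7$ and $a_2=q^{j}$, $a_3=q^{-j}$, in the manner of Theorem 4.3; those in which a \emph{difference} of two bilateral sums appears should follow by recognizing each sum as an Appell-Lerch series $m(x,q,z)$ and invoking the change-of-$z$ formula 11.3.4 of [5], exactly as in Theorem 4.4. These steps are routine specializations of machinery already in place, and I expect no genuine difficulty with them.

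The hard part, as in the modulo-$5$ case, will be the single identity coming from the extreme residue, the septic sibling of equation (14). It will inevitably involve the \emph{improper} sums $\sum_{m}'$ that omit the $m=0$ term and are therefore not honest Appell-Lerch series; handling them forces the limiting, L'Hospital-style device of Theorem 4.5. I would accordingly try to recast this stubborn identity into its cleaner companion forms, the analogues of statements (B) and (C), culminating in a compact Lambert-series assertion of the expected shape
\[
\sum_{m}\frac{q^m(1-q^{m+1})^5}{1-q^{7m+7}}=(q^7;q^7)_\infty^2\,R(q),
\]
where $R(q)$ is the relevant ratio of the septic products. Proving this final equality is the true obstacle: just as its quintic predecessor (C) is left open for the modulus $5$, I do not expect it to yield to the ${}_6\phi_5$/Appell-Lerch toolkit alone, and it is here that the promised connection with the tenth-order mock theta functions would have to be brought to bear.
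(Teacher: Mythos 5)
Your proposal follows essentially the same route as the paper: the four-term $7$-dissection of $(q;q)_\infty$ (the paper's Lemma 5.3), the expansion $(1-q^n)^5(1+q^n)=1-4q^n+5q^{2n}-5q^{4n}+4q^{5n}-q^{6n}$ folded into three bilateral sums, the substitution $n=7m+t$, and the comparison of coefficients of $q^1,\dots,q^7$ reducing the conjecture to seven identities, six of which are handled by the ${}_6\phi_5$ summation and the Appell--Lerch change-of-$z$ formula (the paper additionally needs three septic product identities derived from Slater's Lemma 7.4.4 to match the right-hand sides of (24), (25) and (27), an ingredient you do not anticipate but which is routine). Like the paper, you leave the seventh identity unproven after recasting it as a Lambert-type series (the paper's version is $3\sum_m q^m(1-q^{m+1})^3(2q^{2m+2}+3q^{m+1}+2)/(1-q^{7m+7})$ rather than a fifth power), so the conjecture remains open under both treatments.
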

where $A=\displaystyle\frac{(q^{49};q^{49})_{\infty}}{(q^{7},q^{42};q^{49})_{\infty}},B=\displaystyle\frac{(q^{14},q^{35},q^{49};q^{49})_{\infty}}{(q^{7},q^{21},q^{28},q^{42};q^{49})_{\infty}},C=\displaystyle\frac{(q^{49};q^{49})_{\infty}}{(q^{14},q^{35};q^{49})_{\infty}}, D=\displaystyle\frac{(q^{49};q^{49})_{\infty}}{(q^{21},q^{28};q^{49})_{\infty}},\\ \\ E=\displaystyle\frac{(q^{7},q^{42},q^{49};q^{49})_{\infty}}{(q^{14},q^{21},q^{28},q^{35};q^{49})_{\infty}}.$\\ \\
We set $L(q)=\displaystyle\frac{1}{(q,q^6;q^7)_{\infty}},N(q)=\displaystyle\frac{1}{(q^2,q^5;q^7)_{\infty}},Q(q)=\displaystyle\frac{1}{(q^3,q^4;q^7)_{\infty}}$ so that Conjecture 5.1 becomes 
\begin{multline*}
    \displaystyle\frac{1}{(q;q)_{\infty}}\displaystyle\sum_{n\geq 1}\displaystyle\frac{(-1)^nq^{n(n+1)/2}(1-q^n)^5(1+q^n)}{1-q^{7n}}\\ \\
    =(q^{49};q^{49})_{\infty}\Bigg[-qL(q^7)+3q^2\displaystyle\frac{L(q^7)N(q^7)}{Q(q^7)}-2q^3N(q^7)+q^4Q(q^7)-3q^6\displaystyle\frac{N(q^7)Q(q^7)}{L(q^7)}\Bigg]
\end{multline*}
\begin{remark}
By Corollary 3.4, Conjecture 5.1 would imply that 
\begin{multline*}
    \displaystyle\sum_{n\geq 0}\bigg[M_{\omega}(1,7,n)+2M_{\omega}(2,7,n)-+3M_{\omega}(3,7,n)-3M_{\omega}(4,7,n)-2M_{\omega}(5,7,n)-M_{\omega}(6,7,n)\bigg]q^n \\ \\  \equiv\displaystyle\frac{1}{(q;q)_{\infty}}\displaystyle\sum_{n\geq 1}(-1)^nq^{\binom{n+1}{2}}\displaystyle\frac{(1-q^n)^5(1+q^n)}{(1-q^{7n})}\pmod 7 \\ \\
    =(q^{49};q^{49})_{\infty}\Bigg[-qL(q^7)+3q^2\displaystyle\frac{L(q^7)Q(q^7)}{N(q^7)}-2q^3N(q^7)+q^4Q(q^7)-3q^6\displaystyle\frac{N(q^7)Q(q^7)}{L(q^7)}\Bigg]\pmod 7
\end{multline*}
which would mean \\ \\$\displaystyle\sum_{n\geq 0}\bigg[M_{\omega}(1,7,7n+5)+2M_{\omega}(2,7,7n+5)+3M_{\omega}(3,7,7n+5)-3M_{\omega}(4,7,7n+5)\\-2M_{\omega}(5,7,7n+5)-M_{\omega}(6,7,7n+5)\bigg]\equiv 0\pmod 7$ which is Theorem 1.4.

\end{remark}
We will now show that Conjecture 5.1 follows from seven equalities we obtain by comparing coefficients of powers of $q$ by properly expanding both sides of the conjecture. As in the modulo 5 case, we also give proofs of six of these seven equalities, and give various forms we have obtained for the seventh.
\begin{theorem}
Conjecture 5.1 follows from the following seven equalities. 
\begin{equation}
    (i)\hspace{0.5cm} -\displaystyle\sum_{m}(-1)^m\displaystyle\frac{q^{\frac{7m^2+3m}{2}}}{1-q^{7m+1}}+\displaystyle\sum_{m}(-1)^m\displaystyle\frac{q^{\frac{7m^2-3m}{2}}}{1-q^{7m-2}}=(q^7;q^7)_{\infty}^2\Bigg(-\displaystyle\frac{L^2(q)}{N(q)}+q\displaystyle\frac{Q(q)N(q)}{L(q)}\Bigg)
\end{equation}
\begin{multline}
    (ii)\hspace{0.5cm} 4\displaystyle\sum_{m}(-1)^m\displaystyle\frac{q^{\frac{7m^2+5m}{2}}}{1-q^{7m+1}}+4\displaystyle\sum_{m}(-1)^m\displaystyle\frac{q^{\frac{7m^2+9m+2}{2}}}{1-q^{7m+3}}\\ \\
    =(q^7;q^7)_{\infty}^2\Bigg(\displaystyle\frac{L(q)N(q)}{Q(q)}+3\displaystyle\frac{L^2(q)Q(q)}{N^2(q)}+q\displaystyle\frac{Q^2(q)}{L(q)}\Bigg)
\end{multline}
\begin{multline}
    (iii)\hspace{0.5cm} \displaystyle\sum_{m}(-1)^m\displaystyle\frac{q^{\frac{7m^2+5m}{2}}}{1-q^{7m+2}}-\displaystyle\sum_{m}(-1)^m\displaystyle\frac{q^{\frac{7m^2-5m}{2}}}{1-q^{7m-3}}-5\displaystyle\sum_{m}(-1)^m\displaystyle\frac{q^{\frac{7m^2+7m}{2}}}{1-q^{7m+1}}=-4(q^7;q^7)_{\infty}^2L(q)
\end{multline}
\begin{multline}
    (iv)\hspace{0.5cm} 5\displaystyle\sum_{m}(-1)^m\displaystyle\frac{q^{\frac{7m^2+m-2}{2}}}{1-q^{7m-2}}-5\displaystyle\sum_{m}(-1)^m\displaystyle\frac{q^{\frac{7m^2-m-2}{2}}}{1-q^{7m-3}}\\ \\
    =(q^7;q^7)_{\infty}^2\Bigg(2\displaystyle\frac{N^2(q)}{Q(q)}-2\displaystyle\frac{L(q)Q(q)}{N(q)}-3q\displaystyle\frac{Q^2(q)N(q)}{L^2(q)}\Bigg)
\end{multline}
\begin{multline}
    (v)\hspace{0.5cm} -4\displaystyle\sum_{m}(-1)^m\displaystyle\frac{q^{\frac{7m^2+7m}{2}}}{1-q^{7m+2}}-5\displaystyle\sum_{m}(-1)^m\displaystyle\frac{q^{\frac{7m^2+3m-2}{2}}}{1-q^{7m-1}}-5\displaystyle\sum_{m}(-1)^m\displaystyle\frac{q^{\frac{7m^2+11m+2}{2}}}{1-q^{7m+3}}=(q^7;q^7)_{\infty}^2N(q)
\end{multline}
\begin{multline}
    (vi)\hspace{0.5cm} -\displaystyle\sum_{m}(-1)^m\displaystyle\frac{q^{\frac{7m^2+7m}{2}}}{1-q^{7m+3}}+4\displaystyle\sum_{m}(-1)^m\displaystyle\frac{q^{\frac{7m^2+m-2}{2}}}{1-q^{7m-1}}-4\displaystyle\sum_{m}(-1)^m\displaystyle\frac{q^{\frac{7m^2-m-2}{2}}}{1-q^{7m-2}}=-5(q^7;q^7)_{\infty}^2Q(q)
\end{multline}
\begin{multline}
    (vii)\hspace{0.5cm} \displaystyle\sum_{m}'(-1)^m\displaystyle\frac{q^{\frac{7m^2+m-2}{2}}}{1-q^{7m}}-\displaystyle\sum_{m}(-1)^m\displaystyle\frac{q^{\frac{7m^2-m-2}{2}}}{1-q^{7m-1}}-4\displaystyle\sum_{m}'(-1)^m\displaystyle\frac{q^{\frac{7m^2+3m-2}{2}}}{1-q^{7m}}\\ \\
    +4\displaystyle\sum_{m}(-1)^m\displaystyle\frac{q^{\frac{7m^2-3m-2}{2}}}{1-q^{7m-3}}+5\displaystyle\sum_{m}'(-1)^m\displaystyle\frac{q^{\frac{7m^2+5m-2}{2}}}{1-q^{7m}}+5\displaystyle\sum_{m}(-1)^m\displaystyle\frac{q^{\frac{7m^2+9m}{2}}}{1-q^{7m+2}}\\ \\
    =3(q^7;q^7)_{\infty}^2\Bigg(\displaystyle\frac{Q^2(q)}{N(q)}+\displaystyle\frac{N^2(q)}{L(q)}\Bigg)
\end{multline}
\end{theorem}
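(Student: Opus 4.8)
The plan is to mirror, step for step, the argument that established Theorem 4.2, replacing $5$ by $7$ throughout. First I would clear denominators by multiplying Conjecture 5.1 through by $(q;q)_{\infty}$ and invoking the septic analogue of Lemma 6 of [1], i.e.\ the $7$-dissection of the Euler product. By the pentagonal number theorem the residues $\tfrac{r(3r-1)}{2}\pmod 7$ that actually occur are exactly $0,1,2,5$, so this dissection has the shape $(q;q)_{\infty}=(q^{49};q^{49})_{\infty}\bigl[\alpha_0(q^7)+q\,\alpha_1(q^7)+q^2\,\alpha_2(q^7)+q^5\,\alpha_5(q^7)\bigr]$, where each $\alpha_r$ is a monomial in $L,N,Q$ evaluated at $q^7$. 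Multiplying the closed form $-qL(q^7)+3q^2\tfrac{LQ}{N}(q^7)-2q^3N(q^7)+q^4Q(q^7)-3q^6\tfrac{NQ}{L}(q^7)$ (carrying the factor $(q^{49};q^{49})_{\infty}$) by this dissection produces $(q^{49};q^{49})_{\infty}^2$ times an explicit combination of powers $q^r$ whose coefficients are functions of $q^7$. Because $L,N,Q$ depend only on $q^7$, powers of $q$ differing by $7$ coalesce, and this is precisely why the right sides of (i)--(vii) each carry several terms: for instance the stray $q\,\tfrac{QN}{L}$ in (i) records the wrap-around of the $q^{8}$ contribution, and the two terms in (vii) come from the $q^{7}$ contributions. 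This is the exact analogue of equation (16).

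Next I would transform the series side. Expanding gives $(1-q^n)^5(1+q^n)=1-4q^n+5q^{2n}-5q^{4n}+4q^{5n}-q^{6n}$, whose coefficients are antisymmetric about $q^{3n}$, so that the $q^{3n}$ term drops. Using the substitution $n\mapsto -n$ together with $\tfrac{1}{1-q^{7n}}\mapsto -\tfrac{q^{7n}}{1-q^{7n}}$, exactly as in the derivation of (17), each pair of terms equidistant from $q^{3n}$ folds a one-sided sum into a bilateral one, yielding
\[\sum_{n\geq 1}\frac{(-1)^n q^{n(n+1)/2}(1-q^n)^5(1+q^n)}{1-q^{7n}}=\sum_{n}'\frac{(-1)^n q^{n(n+1)/2}}{1-q^{7n}}-4\sum_{n}'\frac{(-1)^n q^{n(n+3)/2}}{1-q^{7n}}+5\sum_{n}'\frac{(-1)^n q^{n(n+5)/2}}{1-q^{7n}}.\]
I would then set $n=7m+t$ with $t\in\{-3,\dots,3\}$, split each bilateral sum into its seven residue classes, and reassemble the result into a genuine $7$-dissection $\sum_{r=1}^{7}q^r[\cdots]$, the septic counterpart of (20). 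The class $t=0$ carries denominators $1-q^{49m}$ and forces the $m=0$ term to be omitted, which is the source of the primed sums appearing in (vii).

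Finally I would compare, residue class by residue class, the coefficient of $q^r$ for $r=1,2,\dots,7$ between the dissection of the series side and the dissection of the product side from the first step, and then replace $q$ by $q^{1/7}$, sending $(q^{49};q^{49})_{\infty}\mapsto(q^7;q^7)_{\infty}$ and $L,N,Q$ at $q^7$ to $L,N,Q$ at $q$. The seven resulting equalities are precisely (i)--(vii), with (vii) arising from the class $r\equiv 0$; conversely, if all seven hold then the two $7$-dissections agree term by term, hence the underlying series agree, giving Conjecture 5.1.

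I would flag the bookkeeping of the first step as the main obstacle: pinning down the septic $7$-dissection of $(q;q)_{\infty}$ in the correct $L,N,Q$ form, and then performing the product so that the wrapped-around contributions (the pieces $q^{r}$ with $r$ replaced by $r+7$) reassemble into exactly the multi-term right sides of (i)--(vii), is delicate and error-prone; the fold-to-bilateral step and the $n=7m+t$ splitting, by contrast, are routine transcriptions of the modulo $5$ computation.
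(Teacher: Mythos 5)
Your proposal follows essentially the same route as the paper: the septic dissection $(q;q)_{\infty}=(q^{49};q^{49})_{\infty}[L/N-qN/Q-q^2+q^5Q/L](q^7)$ from Lemma 6 of [1] (the paper's Lemma 5.3), multiplication against the conjectured right side, the expansion $(1-q^n)^5(1+q^n)=1-4q^n+5q^{2n}-5q^{4n}+4q^{5n}-q^{6n}$ folded into three bilateral sums, the substitution $n=7m+t$ with $t\in\{-3,\dots,3\}$, and a residue-by-residue comparison followed by $q\mapsto q^{1/7}$. The bookkeeping details you flag (the wrap-around terms and the primed sums from the $t=0$ class) are exactly where the paper's computation in (31)--(36) spends its effort, so the proposal is correct and matches the paper's argument.
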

First we prove the following lemma.
\begin{lemma}
\begin{equation*}
    (q;q)_{\infty}=(q^{49};q^{49})_{\infty}\Bigg[\displaystyle\frac{L(q^7)}{N(q^7)}-q\displaystyle\frac{N(q^7)}{Q(q^7)}-q^2+q^5\displaystyle\frac{Q(q^7)}{L(q^7)}\Bigg]
\end{equation*}
\end{lemma}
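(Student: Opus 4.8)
The plan is to derive the claimed $7$-dissection from Euler's pentagonal number theorem, in direct analogy with the $5$-dissection quoted from [1] in the modulo $5$ case. Recall that $(q;q)_\infty=\sum_{n}(-1)^n q^{n(3n-1)/2}$, the sum running over all integers $n$. The first step is to reduce the generalized pentagonal numbers $g(n)=n(3n-1)/2$ modulo $7$: a short computation shows that $g(n)$ attains only the residues $0,1,2,5 \pmod 7$ (the values $3,4,6$ never occur), which is exactly why the right-hand side of the lemma involves only the powers $q^0,q^1,q^2,q^5$.

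The second step is to split the pentagonal series into these four residue classes and evaluate each. The residue-$2$ class arises from the single progression $n\equiv -1\pmod 7$; writing $n=7m-1$ one checks $g(7m-1)=2+\tfrac{49m(3m-1)}{2}$ and $(-1)^{7m-1}=-(-1)^m$, so that this class collapses, by a second application of the pentagonal number theorem in the base $q^{49}$, to precisely $-q^2(q^{49};q^{49})_\infty$; this produces the $-q^2$ term. Each of the remaining classes (residues $0$, $1$, and $5$) comes from a pair of progressions $n\equiv a,b\pmod 7$ that are \emph{not} interchanged by $n\mapsto -n$, so each is a genuine sum of two theta series. After factoring out the appropriate power of $q$ and one copy of $(q^{49};q^{49})_\infty$, the goal is to show that these three paired sums equal $L(q^7)/N(q^7)$, $N(q^7)/Q(q^7)$, and $Q(q^7)/L(q^7)$ respectively.

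The engine for this identification is the quintuple product identity, together with the Jacobi triple product, which is exactly the tool that rewrites a sum of two such theta progressions as an infinite product; this is the modulo $7$ counterpart of the Rogers--Ramanujan relations that produce the quotients $G(q^5)/H(q^5)$ and $H(q^5)/G(q^5)$ in the modulo $5$ argument. For each of the three classes I would apply the quintuple product identity to the combined band, obtain a product of theta factors, and then simplify it into $(q^{49};q^{49})_\infty$ times the stated ratio of $(q^{7a};q^{49})_\infty$ factors, keeping careful track of the sign and of the shift $q^0,q,q^5$ that the class contributes.

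The main obstacle is precisely this last step: establishing the three quintuple-product evaluations that collapse each paired band into a single quotient $L/N$, $N/Q$, or $Q/L$, and in particular pinning down the correct signs ($+$ for residues $0$ and $5$, $-$ for residue $1$) and the exact powers of $q$. Once these three identities and the elementary residue-$2$ evaluation are assembled, summing the four contributions reproduces $(q^{49};q^{49})_\infty\big[L(q^7)/N(q^7)-qN(q^7)/Q(q^7)-q^2+q^5Q(q^7)/L(q^7)\big]$, which is the asserted formula.
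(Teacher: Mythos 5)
Your route is genuinely different from the paper's, because the paper does not prove this dissection from first principles at all: it quotes Lemma 6 of Atkin and Swinnerton-Dyer [1], namely $(q;q)_{\infty}=-q^2P(0)\big[1-q^{-2}P(2)/P(1)+q^{-1}P(4)/P(2)-q^3P(6)/P(3)\big]$ with $P(0)=(q^{49};q^{49})_{\infty}$, $P(1)=P(6)=(q^7,q^{42};q^{49})_{\infty}$, $P(2)=(q^{14},q^{35};q^{49})_{\infty}$, $P(3)=P(4)=(q^{21},q^{28};q^{49})_{\infty}$, and simply rearranges it into the stated form. What you propose is, in effect, a proof of that quoted lemma. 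The parts you actually carry out are correct: $n(3n-1)/2$ takes only the residues $0,1,2,5$ modulo $7$; the residue $2$ comes solely from $n\equiv-1\pmod 7$ and collapses to $-q^{2}(q^{49};q^{49})_{\infty}$ by the pentagonal number theorem in base $q^{49}$; and the residues $0,1,5$ come from the pairs $n\equiv 0,-2$, $n\equiv 1,-3$ and $n\equiv 2,3$, each of which is indeed a quintuple-product configuration (base $q^{49}$ with $z=-q^{-7}$, $-q^{-14}$, $-q^{-21}$ respectively, up to reindexing), so the tool you name will close the argument and produce the quotients $L/N$, $N/Q$, $Q/L$ with the signs you predict. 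This buys a self-contained derivation where the paper only has a citation.

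The weakness is that you stop exactly where the content of the lemma begins: the three quintuple-product evaluations you defer as ``the main obstacle'' \emph{are} Atkin--Swinnerton-Dyer's Lemma 6, and everything preceding them is bookkeeping. As written, the proposal is a correct and viable outline rather than a complete proof. To finish it you must actually match each paired band to the specialization above and simplify the resulting five-factor product into $(q^{49};q^{49})_{\infty}$ times the stated quotient of $(q^{7a},q^{49-7a};q^{49})_{\infty}$ factors; otherwise the economical move is the paper's, namely to cite the dissection.
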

\begin{proof}
By Lemma 6 in [1], 
\begin{equation*}
    (q;q)_{\infty}=-q^2P(0)\Bigg[1-q^{-2}\displaystyle\frac{P(2)}{P(1)}+q^{-1}\displaystyle\frac{P(4)}{P(2)}-q^3\displaystyle\frac{P(6)}{P(3)}\Bigg]
\end{equation*}
with $P(0)=(q^{49};q^{49})_{\infty},P(1)=P(6)=(q^7,q^{42};q^{49})_{\infty},P(2)=(q^{14},q^{35};q^{49})_{\infty},P(3)=P(4)=(q^{21},q^{28};q^{49})_{\infty}.$ \\ \\
Thus, 
\begin{multline*}
    (q;q)_{\infty}=P(0)\Bigg[\displaystyle\frac{P(2)}{P(1)}-q^\displaystyle\frac{P(4)}{P(2)}-q^2+q^5\displaystyle\frac{P(6)}{P(3)}\Bigg]=(q^{49};q^{49})_{\infty}\Bigg[\displaystyle\frac{L(q^7)}{N(q^7)}-q\displaystyle\frac{N(q^7)}{Q(q^7)}-q^2+q^5\displaystyle\frac{Q(q^7)}{L(q^7)}\Bigg]
\end{multline*}
as desired.
\end{proof}
\begin{proof}[Proof of Theorem 5.2]
Using the expansion of $(q;q)_{\infty}$ from Lemma 5.3, Conjecture 5.1 becomes 
\begin{multline}
    \displaystyle\sum_{n\geq 1}\displaystyle\frac{(-1)^nq^{n(n+1)/2}(1-q^n)^5(1+q^n)}{1-q^{7n}}\\ \\
    =(q^{49};q^{49})_{\infty}^2\Bigg[-qL(q^7)+3q^2\displaystyle\frac{L(q^7)Q(q^7)}{N(q^7)}-2q^3N(q^7)+q^4Q(q^7)-3q^6\displaystyle\frac{N(q^7)Q(q^7)}{L(q^7)}\Bigg]\\ \\
    \times \Bigg[\displaystyle\frac{L(q^7)}{N(q^7)}-q\displaystyle\frac{N(q^7)}{Q(q^7)}-q^2+q^5\displaystyle\frac{Q(q^7)}{L(q^7)}\Bigg] \\ \\
    =(q^{49};q^{49})_{\infty}^2\Bigg[-q\displaystyle\frac{L^2(q^7)}{N(q^7)}+q^2\Bigg(\displaystyle\frac{L(q^7)N(q^7)}{Q(q^7)}+3\displaystyle\frac{L^2(q^7)Q(q^7)}{N^2(q^7)}\Bigg)-4q^3L(q^7)\\ \\
    +2q^4\Bigg(\displaystyle\frac{N^2(q^7)}{Q(q^7)}-\displaystyle\frac{L(q^7)Q(q^7)}{N(q^7)}\Bigg)+q^5N(q^7)-5q^6Q(q^7)+3q^7\Bigg(\displaystyle\frac{Q^2(q^7)}{N(q^7)}+\displaystyle\frac{N^2(q^7)}{L(q^7)}\Bigg)\\ \\
    +q^8\displaystyle\frac{Q(q^7)N(q^7)}{L(q^7)}+q^9\displaystyle\frac{Q^2(q^7)}{L(q^7)}-3q^{11}\displaystyle\frac{Q^2(q^7)N(q^7)}{L^2(q^7)}\Bigg]
\end{multline}
The left side in (31) is \\ \\$\displaystyle\sum_{n\geq 1}\displaystyle\frac{(-1)^nq^{n(n+1)/2}}{1-q^{7n}}\Bigg(1-4q^n+5q^{2n}-5q^{4n}+4q^{5n}-q^{6n}\Bigg)$\\ \\
\begin{equation} 
=\displaystyle\sum_{n}'\displaystyle\frac{(-1)^nq^{n(n+1)/2}}{1-q^{7n}}-4\displaystyle\sum_{n}'\displaystyle\frac{(-1)^nq^{n(n+3)/2}}{1-q^{7n}}+5\displaystyle\sum_{n}'\displaystyle\frac{(-1)^nq^{n(n+5)/2}}{1-q^{7n}}
\end{equation}
since $-\displaystyle\sum_{n\geq 1}\displaystyle\frac{(-1)^nq^{n(n+1)/2}}{1-q^{7n}}.q^{6n}=\displaystyle\sum_{n\leq -1}\displaystyle\frac{(-1)^nq^{n(n+1)/2}}{1-q^{7n}},$ $\displaystyle\sum_{n\geq 1}\displaystyle\frac{(-1)^nq^{n(n+1)/2}}{1-q^{7n}}.q^{5n}=-\displaystyle\sum_{n\leq -1}\displaystyle\frac{(-1)^nq^{n(n+3)/2}}{1-q^{7n}}$ \\ \\ and $-\displaystyle\sum_{n\geq 1}\displaystyle\frac{(-1)^nq^{n(n+1)/2}}{1-q^{7n}}.q^{4n}=\displaystyle\sum_{n\leq -1}\displaystyle\frac{(-1)^nq^{n(n+5)/2}}{1-q^{7n}}.$ \\ \\
Now, 
\begin{equation*}
    \displaystyle\sum_{n}'\displaystyle\frac{(-1)^nq^{n(n+1)/2}}{1-q^{7n}}=\displaystyle\sum_{t=-3}^{3}(-1)^tq^{t(t+1)/2}\displaystyle\sum_{m}\displaystyle\frac{(-1)^mq^{((7m+t)(7m+t+1)-t(t+1))/2}}{1-q^{49m+7t}},
\end{equation*}
(with the $m=t=0$ term omitted)
\begin{multline} 
    =\displaystyle\sum_{m}'\displaystyle\frac{(-1)^mq^{(49m^2+7m)/2}}{1-q^{49m}}-\displaystyle\sum_{m}\displaystyle\frac{(-1)^mq^{(49m^2+21m+2)/2}}{1-q^{49m+7}}-\displaystyle\sum_{m}\displaystyle\frac{(-1)^mq^{(49m^2-7m)/2}}{1-q^{49m-7}}\\ \\
    +\displaystyle\sum_{m}\displaystyle\frac{(-1)^mq^{(49m^2+35m+6)/2}}{1-q^{49m+14}}+\displaystyle\sum_{m}\displaystyle\frac{(-1)^mq^{(49m^2-21m+2)/2}}{1-q^{49m-14}}-\displaystyle\sum_{m}\displaystyle\frac{(-1)^mq^{(49m^2+49m+12)/2}}{1-q^{49m+21}}\\ \\
    -\displaystyle\sum_{m}\displaystyle\frac{(-1)^mq^{(49m^2-35m+6)/2}}{1-q^{49m-21}}
\end{multline}
Similarly, we get, $\displaystyle\sum_{n}'\displaystyle\frac{(-1)^nq^{n(n+3)/2}}{1-q^{7n}}$
\begin{multline}
    =\displaystyle\sum_{m}'\displaystyle\frac{(-1)^mq^{(49m^2+21m)/2}}{1-q^{49m}}-\displaystyle\sum_{m}\displaystyle\frac{(-1)^mq^{(49m^2+35m+4)/2}}{1-q^{49m+7}}-\displaystyle\sum_{m}\displaystyle\frac{(-1)^mq^{(49m^2+7m-2)/2}}{1-q^{49m-7}}\\ \\
    +\displaystyle\sum_{m}\displaystyle\frac{(-1)^mq^{(49m^2+49m+10)/2}}{1-q^{49m+14}}+\displaystyle\sum_{m}\displaystyle\frac{(-1)^mq^{(49m^2-7m-2)/2}}{1-q^{49m-14}}-\displaystyle\sum_{m}\displaystyle\frac{(-1)^mq^{(49m^2+63m+18)/2}}{1-q^{49m+21}}\\ \\
    -\displaystyle\sum_{m}\displaystyle\frac{(-1)^mq^{(49m^2-21m)/2}}{1-q^{49m-21}}
\end{multline}
and $\displaystyle\sum_{n}'\displaystyle\frac{(-1)^nq^{n(n+5)/2}}{1-q^{7n}}$
\begin{multline}
    =\displaystyle\sum_{m}'\displaystyle\frac{(-1)^mq^{(49m^2+35m)/2}}{1-q^{49m}}-\displaystyle\sum_{m}\displaystyle\frac{(-1)^mq^{(49m^2+49m+6)/2}}{1-q^{49m+7}}-\displaystyle\sum_{m}\displaystyle\frac{(-1)^mq^{(49m^2+21m-4)/2}}{1-q^{49m-7}}\\ \\
    +\displaystyle\sum_{m}\displaystyle\frac{(-1)^mq^{(49m^2+63m+14)/2}}{1-q^{49m+14}}+\displaystyle\sum_{m}\displaystyle\frac{(-1)^mq^{(49m^2+7m-6)/2}}{1-q^{49m-14}}-\displaystyle\sum_{m}\displaystyle\frac{(-1)^mq^{(49m^2+77m+24)/2}}{1-q^{49m+21}}\\ \\
    -\displaystyle\sum_{m}\displaystyle\frac{(-1)^mq^{(49m^2-7m-6)/2}}{1-q^{49m-21}}
\end{multline}
Thus, by (32) through (35) the left side in (31) equals 
\begin{multline*}
    \Bigg[\displaystyle\sum_{m}'(-1)^m\displaystyle\frac{q^{\frac{49m^2+7m}{2}}}{1-q^{49m}}-\displaystyle\sum_{m}(-1)^m\displaystyle\frac{q^{\frac{49m^2-7m}{2}}}{1-q^{49m-7}}-4\displaystyle\sum_{m}'(-1)^m\displaystyle\frac{q^{\frac{49m^2+21m}{2}}}{1-q^{49m}}\\ \\
    +4\displaystyle\sum_{m}(-1)^m\displaystyle\frac{q^{\frac{49m^2-21m}{2}}}{1-q^{49m-21}}+5\displaystyle\sum_{m}'(-1)^m\displaystyle\frac{q^{\frac{49m^2+35m}{2}}}{1-q^{49m}}+5\displaystyle\sum_{m}(-1)^m\displaystyle\frac{q^{\frac{49m^2+63m+14}{2}}}{1-q^{49m+14}}\Bigg] 
\end{multline*} 
\begin{multline}
    +q\Bigg[-\displaystyle\sum_{m}(-1)^m\displaystyle\frac{q^{\frac{49m^2+21m}{2}}}{1-q^{49m+7}}+\displaystyle\sum_{m}(-1)^m\displaystyle\frac{q^{\frac{49m^2-21m}{2}}}{1-q^{49m-14}}\Bigg] \\ \\
    +4q^2\Bigg[\displaystyle\sum_{m}(-1)^m\displaystyle\frac{q^{\frac{49m^2+35m}{2}}}{1-q^{49m+7}}+\displaystyle\sum_{m}(-1)^m\displaystyle\frac{q^{\frac{49m^2+63m+14}{2}}}{1-q^{49m+21}}\Bigg] \\ \\
    +q^3\Bigg[\displaystyle\sum_{m}(-1)^m\displaystyle\frac{q^{\frac{49m^2+35m}{2}}}{1-q^{49m+14}}-\displaystyle\sum_{m}(-1)^m\displaystyle\frac{q^{\frac{49m^2-35m}{2}}}{1-q^{49m-21}}-5\displaystyle\sum_{m}(-1)^m\displaystyle\frac{q^{\frac{49m^2+49m}{2}}}{1-q^{49m+7}}\Bigg] \\ \\
    +5q^4\bigg[\displaystyle\sum_{m}(-1)^m\displaystyle\frac{q^{\frac{49m^2+7m-14}{2}}}{1-q^{49m-14}}-\displaystyle\sum_{m}(-1)^m\displaystyle\frac{q^{\frac{49m^2-7m-14}{2}}}{1-q^{49m-21}}\Bigg] \\ \\
    +q^5\Bigg[-4\displaystyle\sum_{m}(-1)^m\displaystyle\frac{q^{\frac{49m^2+49m}{2}}}{1-q^{49m+14}}-5\displaystyle\sum_{m}(-1)^m\displaystyle\frac{q^{\frac{49m^2+21m-14}{2}}}{1-q^{49m-7}}-5\displaystyle\sum_{m}(-1)^m\displaystyle\frac{q^{\frac{49m^2+77m+14}{2}}}{1-q^{49m+21}}\Bigg] \\ \\
    +q^6\Bigg[-\displaystyle\sum_{m}(-1)^m\displaystyle\frac{q^{\frac{49m^2+49m}{2}}}{1-q^{49m+21}}+4\displaystyle\sum_{m}(-1)^m\displaystyle\frac{q^{\frac{49m^2+7m-14}{2}}}{1-q^{49m-7}}-4\displaystyle\sum_{m}(-1)^m\displaystyle\frac{q^{\frac{49m^2-7m-14}{2}}}{1-q^{49m-14}}\Bigg].
\end{multline}
Comparing the coefficients of $q,q^2,q^3,q^4,q^5,q^6,q^7$ in (36) and the right side of (31), and replacing $q$ by $q^{1/7},$ we get the seven equalities in Theorem 5.2.
\end{proof}
\begin{theorem}
Equations (26), (28) and (29) in Theorem 5.2 are true.
\end{theorem}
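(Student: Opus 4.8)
The plan is to prove (26), (28) and (29) by the Appell--Lerch method already used for the modulus $5$ identities (10) and (13) in Theorem 4.4, now carried out with base $q^7$, together with the very well-poised ${}_6\phi_5$ summation (21) for the single symmetric sum that occurs. The first step is to read every sum on the left of (26), (28), (29) as an Appell--Lerch series $m(x,q^7,z)$: writing the quadratic exponent as $\tfrac{7m^2+am}{2}=\tfrac{7m(m+1)}{2}+\tfrac{(a-7)m}{2}$ and comparing with $q^{7m(m+1)/2}z^m$ forces $z=q^{(a-7)/2}$, while comparing the denominator $1-q^{7m+c}$ with $1-xzq^{7m}$ forces $x=q^{\,c-(a-7)/2}$. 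For the two sums of (26) this yields
\begin{align*}
\sum_m(-1)^m\frac{q^{(7m^2+3m)/2}}{1-q^{7m+1}}&=-q^2\, j(q^{-2};q^7)\,m(q^3,q^7,q^{-2}),\\
\sum_m(-1)^m\frac{q^{(7m^2-3m)/2}}{1-q^{7m-2}}&=-q^5\, j(q^{-5};q^7)\,m(q^3,q^7,q^{-5}),
\end{align*}
and analogously the two sums of (28) carry $x=q^3$, $z=q^{-1},q^{-6}$, and those of (29) carry $x=q$, $z=q^{-3},q^{-4}$ after pulling out a common factor $q^{-1}$. Each $j$-factor is simplified by the Jacobi triple product reduction $j(q^{-a};q^7)=-q^{-a}(q^7;q^7)_\infty(q^a,q^{7-a};q^7)_\infty$; since $(q^a,q^{7-a};q^7)_\infty$ is the reciprocal of one of $L,N,Q$, this rewrites every $j$ directly in terms of those products, e.g. $j(q^{-2};q^7)=-q^{-2}(q^7;q^7)_\infty/N(q)$ and $j(q^{-1};q^7)=-q^{-1}(q^7;q^7)_\infty/L(q)$.

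Because the two sums in each of (26) and (29), and the first two sums in (28), share a common $x$, each such pair is a genuine difference $m(x,q^7,z_1)-m(x,q^7,z_0)$, which I would collapse with Lemma 11.3.4 of [5] exactly as in Theorem 4.4. This gives $m(q^3,q^7,q^{-2})-m(q^3,q^7,q^{-5})=(q^7;q^7)_\infty N^3L/Q^2$, so that the whole left side of (26) equals $-(q^7;q^7)_\infty^2\,N^2L/Q^2$; similarly the left side of (29) collapses to $-5q(q^7;q^7)_\infty^2\,Q^2N/L^2$, and the difference part of (28) to $(q^7;q^7)_\infty^2 L$. The remaining third sum of (28) has $z=1$ and so is not an Appell--Lerch series; instead I would evaluate $\sum_m(-1)^m q^{(7m^2+7m)/2}/(1-q^{7m+1})=(q^7;q^7)_\infty^2 L(q)$ by applying (21) with $z=1$, $a_1\to\infty$, $a_2=q$, $a_3=q^{-1}$, $q\to q^7$, verbatim as in the proof of Theorem 4.3. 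Adding the three pieces, the left side of (28) becomes $(1-5)(q^7;q^7)_\infty^2 L=-4(q^7;q^7)_\infty^2 L$, which is the right side; so (28) is finished with no further input.

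The real work is in (26) and (29). There the Appell--Lerch reduction produces a single monomial in $L,N,Q$, whereas the right sides recorded in Theorem 5.2 are the two- and three-term expressions forced by the seven-dissection, which collect the contributions of the two powers of $q$ (namely $q$ and $q^8$ for (26), and $q^4$ and $q^{11}$ for (29)) that fall in the same residue class modulo $7$. Reconciling the two shapes comes down to the single theta identity
\[
L(q)^3\,Q(q)^2=N(q)^3\,L(q)^2+q\,N(q)^2\,Q(q)^3,
\]
because this is exactly what converts $-N^2L/Q^2$ into $-L^2/N+qQN/L$ and $-5qQ^2N/L^2$ into $2N^2/Q-2LQ/N-3qQ^2N/L^2$. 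I would therefore isolate and prove this identity on its own, either by expanding $L,N,Q$ through the Jacobi triple product and matching the resulting theta quotients, or by identifying it with a known relation among the three Rogers--Selberg modulus-$7$ products; a power-series check confirms it through the first several coefficients. The main obstacle is precisely this auxiliary product identity, which has no modulus-$5$ analogue: in Theorem 4.4 the corresponding right sides were already single monomials, because the modulus-$5$ product expansion has no powers of $q$ spilling over into an earlier residue class. The Appell--Lerch bookkeeping itself is routine once the parameters are read off, the only care needed being the tracking of signs and powers of $q$ through the $j$-reductions.
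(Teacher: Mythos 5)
Your proposal has the right toolkit --- Appell--Lerch pairs collapsed by Lemma 11.3.4 of [5], together with the very well-poised $_6\phi_5$ summation (21) for the symmetric sums with $z=1$ --- and this is exactly what the paper uses. However, you have mislabelled the equations (apparently counting the displayed identities of Theorem 5.2 from (26) rather than from (24)), and as a result two of the three equations this theorem actually asserts are never addressed. What you call ``(26)'' (the pair with exponents $\frac{7m^2\pm 3m}{2}$ and right side $-L^2/N+q\,QN/L$) is equation (24), and what you call ``(29)'' (the pair with $x=q$, $z=q^{-3},q^{-4}$ and right side $2N^2/Q-2LQ/N-3q\,Q^2N/L^2$) is equation (27); both of these belong to Theorem 5.6, not to the present statement. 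Only your ``(28)'' is one of the three targets, namely equation (26), and there your argument --- the difference $m(q^3,q^7,q^{-1})-m(q^3,q^7,q^{-6})$ yielding $(q^7;q^7)_\infty^2L(q)$, the symmetric sum yielding $(q^7;q^7)_\infty^2L(q)$ from the $_6\phi_5$ specialization, and $1-5=-4$ --- coincides with the paper's proof.

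The genuine gap is that equations (28) and (29) themselves (items (v) and (vi) of Theorem 5.2) are not proved. Equation (28) combines the symmetric sum $\sum_m(-1)^mq^{7m(m+1)/2}/(1-q^{7m+2})=(q^7;q^7)_\infty^2N(q)$ with the pair $\sum_m(-1)^mq^{(7m^2+3m-2)/2}/(1-q^{7m-1})+\sum_m(-1)^mq^{(7m^2+11m+2)/2}/(1-q^{7m+3})$, which the paper evaluates to $-(q^7;q^7)_\infty^2N(q)$ via $m(q,q^7,q^{-2})-m(q,q^7,q^{2})$, giving $-4N-5(-N)=N$; equation (29) is the analogous computation producing $-5(q^7;q^7)_\infty^2Q(q)$. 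Your method would dispatch both without new ideas --- and, notably, without the auxiliary theta identity, since the right sides of (26), (28), (29) are already single monomials in $L,N,Q$ --- but as written the proposal does not carry this out. Conversely, the identity $L^3Q^2=N^3L^2+qN^2Q^3$ that you isolate as ``the real work'' is precisely the paper's Lemma 5.7, which is needed for equations (24), (25), (27) in Theorem 5.6 rather than here.
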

\begin{lemma}
For $i=1,2,3,$ $\Bigg(\displaystyle\sum_{n}\displaystyle\frac{(-1)^nq^{7n(n+1)/2}}{1-q^{7n+i}}\Bigg)(1-q^i)=\displaystyle\frac{(q^7;q^7)_{\infty}^2}{(q^{7-i},q^{7+i};q^7)_{\infty}}$
\end{lemma}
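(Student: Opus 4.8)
The plan is to specialize the very-well-poised summation formula (21) --- the same identity that yielded equations (11) and (12) in Theorem 4.3 --- to the base $q^7$, with the choices $z=1$, $a_1\to\infty$, $a_2=q^i$ and $a_3=q^{-i}$. Since $a_2a_3=1$ for every $i$, the argument $\frac{zq}{a_1a_2a_3}$ tends to $0$ and no degeneration occurs, so a single computation will dispatch $i=1,2,3$ at once.

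First I would evaluate the product side of (21) under this substitution. As $a_1\to\infty$ the four factors $(\frac{zq}{a_1};q^7)_\infty$, $(\frac{zq}{a_1a_2};q^7)_\infty$, $(\frac{zq}{a_1a_3};q^7)_\infty$ and $(\frac{zq}{a_1a_2a_3};q^7)_\infty$ each tend to $1$, while $zq=q^7$ and $\frac{zq}{a_2a_3}=q^7$ survive in the numerator and $\frac{zq}{a_2}=q^{7-i}$, $\frac{zq}{a_3}=q^{7+i}$ survive in the denominator. Hence the product collapses to $\frac{(q^7;q^7)_\infty^2}{(q^{7-i},q^{7+i};q^7)_\infty}$, which is exactly the right-hand side of the Lemma; for $i=1,2,3$ the exponents $7-i$ and $7+i$ are positive, so no vanishing factor $(1;q^7)_\infty$ appears.

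Next I would evaluate the series side, running the computation in the proof of Theorem 4.3 verbatim. The very-well-poised prefactor $\frac{(z;q^7)_m(1-zq^{14m})}{(1-z)(q^7;q^7)_m}$ tends to $1+q^{7m}$ as $z\to 1$; the limit $a_1\to\infty$ together with $\left(\frac{zq}{a_1a_2a_3}\right)^m=q^{7m}/a_1^m$ produces the factor $(-1)^m q^{7m(m+1)/2}$; and the telescopings $\frac{(q^i;q^7)_m}{(q^{7+i};q^7)_m}=\frac{1-q^i}{1-q^{7m+i}}$ and $\frac{(q^{-i};q^7)_m}{(q^{7-i};q^7)_m}=\frac{1-q^{-i}}{1-q^{7m-i}}$ reduce the $m$-th term (for $m\ge 1$) to $(1+q^{7m})(-1)^m q^{7m(m+1)/2}\frac{(1-q^i)(1-q^{-i})}{(1-q^{7m+i})(1-q^{7m-i})}$. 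I would then apply the partial-fraction splitting $\frac{(1-q^i)(1-q^{-i})(1+q^{7m})}{(1-q^{7m-i})(1-q^{7m+i})}=\frac{1-q^{-i}}{1-q^{7m-i}}+\frac{1-q^i}{1-q^{7m+i}}$ (verified by clearing denominators), and fold the $(-i)$-piece summed over $m\ge 1$ into the $(+i)$-piece summed over $m\le -1$ through $m\mapsto -m$, exactly the symmetry underlying (17). Adding the $m=0$ term, which contributes $1$, the series becomes $(1-q^i)\sum_m \frac{(-1)^m q^{7m(m+1)/2}}{1-q^{7m+i}}$.

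Equating the two evaluations yields the Lemma for each $i=1,2,3$. The one step demanding care is the reindexing $m\mapsto -m$: one must track the sign $(-1)^m$, the shift $q^{7m(m+1)/2}=q^{7m(m-1)/2}q^{7m}$, and the inversion $\frac{1}{1-q^{-7m-i}}=\frac{-q^{7m+i}}{1-q^{7m+i}}$, so that the negative-index $(-i)$-summand lines up with the positive-index $(+i)$-summand. This is precisely the bookkeeping already carried out in the modulus-$5$ case, so no genuinely new obstacle appears; the real content of the Lemma is simply that this one very-well-poised specialization works uniformly across the three residues $i=1,2,3$.
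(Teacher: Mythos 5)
Your proposal is correct and follows essentially the same route as the paper: the paper's proof of this lemma also specializes the very-well-poised summation (21) with $z=1$, $a_1\to\infty$, $a_2=q^i$, $a_3=q^{-i}$, $q\mapsto q^7$, applies the same partial-fraction splitting of $\frac{(1+q^{7n})(1-q^i)(1-q^{-i})}{(1-q^{7n+i})(1-q^{7n-i})}$, and folds the negative-index piece into a bilateral sum exactly as you describe.
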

\begin{proof}
Recall by Theorem 3.2 in [4] that
\begin{equation*}
    6\phi_5\Bigg[\begin{array}{c}
             z, q\sqrt z,-q\sqrt z,a_1,a_2,a_3 \\
             \sqrt z,-\sqrt z,\displaystyle\frac{zq}{a_1},\displaystyle\frac{zq}{a_2},\displaystyle\frac{zq}{a_3}
    \end{array}
;q,\displaystyle\frac{zq}{a_1a_2a_3}\Bigg]=\displaystyle\prod\Bigg[\begin{array}{c}
             zq, \displaystyle\frac{zq}{a_1a_2},\displaystyle\frac{zq}{a_1a_3},\displaystyle\frac{zq}{a_2a_3} \\
             \displaystyle\frac{zq}{a_1},\displaystyle\frac{zq}{a_2},\displaystyle\frac{zq}{a_3},\displaystyle\frac{zq}{a_1a_2a_3}
    \end{array}
    \Bigg]
\end{equation*}
Setting $z=1,a_1\rightarrow\infty,a_2=q^i,a_3=q^{-i},q=q^7,$ the right side equals the right side of Lemma 5.5. Under the same substitutions, the left side becomes 
\begin{multline*}
    1+\displaystyle\sum_{n=1}^{\infty}(-1)^nq^{7n(n+1)/2}(1+q^{7n})\Bigg(\displaystyle\frac{(1-q^i)(1-q^{-i})}{(1-q^{7n+i})(1-q^{7n-i})}\Bigg)\\
    =1+\displaystyle\sum_{n=1}^{\infty}(-1)^nq^{7n(n+1)/2}(1-q^i)(1-q^{-i})\Bigg(\displaystyle\frac{1}{(1-q^i)(1-q^{7n-i})}+\displaystyle\frac{1}{(1-q^{-i})(1-q^{7n+i})}\Bigg) \\ 
    =1+\displaystyle\sum_{n\geq 1}\displaystyle\frac{(-1)^nq^{7n(n+1)/2}(1-q^i)}{1-q^{7n+i}}+\displaystyle\sum_{n\leq -1}\displaystyle\frac{(-1)^nq^{7n(n+1)/2}(1-q^i)}{1-q^{7n+i}}=(1-q^i)\displaystyle\sum_{n}\displaystyle\frac{(-1)^nq^{7n(n+1)/2}}{1-q^{7n+i}},
\end{multline*}
proving the lemma.
\end{proof}
Setting $i=1,2,3$ in Lemma 5.5, we get the following.
\begin{equation}
    \displaystyle\sum_{n}\displaystyle\frac{(-1)^nq^{7n(n+1)/2}}{1-q^{7n+1}}=(q^7;q^7)_{\infty}^2L(q)
\end{equation}
\begin{equation}
    \displaystyle\sum_{n}\displaystyle\frac{(-1)^nq^{7n(n+1)/2}}{1-q^{7n+2}}=(q^7;q^7)_{\infty}^2N(q)
\end{equation}
\begin{equation}
    \displaystyle\sum_{n}\displaystyle\frac{(-1)^nq^{7n(n+1)/2}}{1-q^{7n+3}}=(q^7;q^7)_{\infty}^2Q(q)
\end{equation}
\begin{proof}[Proof of Theorem 5.4]
We have that $m(q^3,q^7,q^{-1})=-\displaystyle\frac{1}{qj(1/q;q^7)}\displaystyle\sum_{m}\displaystyle\frac{(-1)^mq^{(7m^2+5m)/2}}{1-q^{7m+2}}$ and \\ $m(q^3,q^7,q^{-6})=-\displaystyle\frac{1}{q^6j(1/q^6;q^7)}\displaystyle\sum_{m}\displaystyle\frac{(-1)^mq^{(7m^2-5m)/2}}{1-q^{7m-3}}$. It thereby follows that\\
 
$\displaystyle\sum_{m}\displaystyle\frac{(-1)^mq^{(7m^2+5m)/2}}{1-q^{7m+2}}-\displaystyle\sum_{m}\displaystyle\frac{(-1)^mq^{(7m^2-5m)/2}}{1-q^{7m-3}}\\ \\
=-qj(1/q;q^7)m(q^3,q^7,q^{-1})+q^6j(1/q^6;q^7)m(q^3,q^7,q^{-6})
=(q,q^6,q^7;q^7)_{\infty}\Bigg[m(q^3,q^7,q^{-1})-m(q^3,q^7,q^{-6})\Bigg]$

Using Lemma 11.3.4 of [5], we can check as before that\\ \\ $m(q^3,q^7,q^{-1})-m(q^3,q^7,q^{-6})=\displaystyle\frac{\displaystyle\frac{1}{q^6}(q^7;q^7)_{\infty}^3j(q^5;q^7)j(1/q^4;q^7)}{j(1/q^6;q^7)j(1/q;q^7)j(1/q^3;q^7)j(q^2;q^7)}=\displaystyle\frac{(q^7;q^7)_{\infty}}{(q,q^6,q,q^6;q^7)_{\infty}}$ \\ \\
so that $\displaystyle\sum_{m}\displaystyle\frac{(-1)^mq^{(7m^2+5m)/2}}{1-q^{7m+2}}-\displaystyle\sum_{m}\displaystyle\frac{(-1)^mq^{(7m^2-5m)/2}}{1-q^{7m-3}}=\displaystyle\frac{(q^7;q^7)_{\infty}^2}{(q,q^6;q^7)_{\infty}}=(q^7;q^7)_{\infty}^2L(q)$ \\ \\whence by (37), equation (26) follows.\\ \\
The proof of equation (28) is similar. Indeed, $m(q,q^7,q^{-2})=-\displaystyle\frac{1}{q^2j(1/q^2;q^7)}\displaystyle\sum_{m}\displaystyle\frac{(-1)^mq^{(7m^2+3m)/2}}{1-q^{7m-1}}$ and $m(q,q^7,q^{2})=-\displaystyle\frac{q^2}{j(q^2;q^7)}\displaystyle\sum_{m}\displaystyle\frac{(-1)^mq^{(7m^2+11m)/2}}{1-q^{7m+3}}$ whence similarly as before, we can check by Lemma 11.3.4 of [5] that \\ \\$\displaystyle\sum_{m}\displaystyle\frac{(-1)^mq^{(7m^2+3m-2)/2}}{1-q^{7m-1}}+\displaystyle\sum_{m}\displaystyle\frac{(-1)^mq^{(7m^2+11m+2)/2}}{1-q^{7m+3}}=-(q^7;q^7)_{\infty}^2N(q).$ \\ \\By (38), equation (28) now follows. \\ \\ Equation (29) follows similarly from (39). We omit the details.
\end{proof}
\begin{theorem}
Equations (24), (25) and (27) in the statement of Theorem 5.2 are true.
\end{theorem}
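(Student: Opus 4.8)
The plan is to handle all three equations uniformly with the Appell--Lerch machinery already used in Theorems 4.4 and 5.4, reducing them to a single theta identity for the modulus-$7$ products $L,N,Q$. First I would rewrite each of the two series in (24), (25) and (27) as an Appell--Lerch function to base $q^{7}$. Using $m(x,q^{7},z)=-\frac{z}{j(z;q^{7})}\sum_{r}\frac{(-1)^{r}q^{7r(r+1)/2}z^{r}}{1-xzq^{7r}}$, a series of the shape $\sum_{m}(-1)^{m}q^{(7m^{2}+am)/2}/(1-q^{7m+b})$ equals $-q^{(7-a)/2}j\!\left(q^{(a-7)/2};q^{7}\right)m\!\left(q^{(7-a+2b)/2},q^{7},q^{(a-7)/2}\right)$, the constant shifts $\pm 2$ in the exponents of (25) and (27) contributing only an overall factor $q^{\pm1}$. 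The decisive point is that in each of the three equations the two series carry a \emph{common} first argument $x$: one gets $x=q^{3}$ in (24), $x=q^{2}$ in (25), and $x=q$ in (27). Hence every left-hand side collapses to a constant times a single difference $m(x,q^{7},z_{1})-m(x,q^{7},z_{0})$.

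I would then evaluate these differences by the change-of-base formula (Lemma 11.3.4 of [5]) and simplify the resulting theta quotients with $j(q^{\pm1};q^{7})=\mp q^{\mp1}(q^{7};q^{7})_{\infty}/L(q)$, $j(q^{\pm2};q^{7})=\mp q^{\mp2}(q^{7};q^{7})_{\infty}/N(q)$, $j(q^{\pm3};q^{7})=\mp q^{\mp3}(q^{7};q^{7})_{\infty}/Q(q)$, and the analogous values at $q^{\pm4},q^{\pm5},q^{\pm6}$, exactly as in the proof of Theorem 5.4. This should turn each left side into a single monomial in $L,N,Q$: I expect the left side of (24) to equal $-(q^{7};q^{7})_{\infty}^{2}N^{2}(q)L(q)/Q^{2}(q)$, that of (25) to equal $4(q^{7};q^{7})_{\infty}^{2}L^{2}(q)Q(q)/N^{2}(q)$, and that of (27) to equal $-5q(q^{7};q^{7})_{\infty}^{2}Q^{2}(q)N(q)/L^{2}(q)$.

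It remains to reconcile these single monomials with the printed right-hand sides, which are two- and three-term expressions arising from the cross terms of the $7$-dissection of $(q;q)_{\infty}$ in Lemma 5.3 and of (31). A brief algebraic check shows that all three reconciliations are equivalent to one and the same cubic theta identity, namely $L^{3}(q)Q^{2}(q)=L^{2}(q)N^{3}(q)+qN^{2}(q)Q^{3}(q)$, or in product form
\[ (q^{2},q^{5};q^{7})_{\infty}^{3}(q^{3},q^{4};q^{7})_{\infty}=(q,q^{6};q^{7})_{\infty}(q^{3},q^{4};q^{7})_{\infty}^{3}+q(q,q^{6};q^{7})_{\infty}^{3}(q^{2},q^{5};q^{7})_{\infty}. \]
Thus the three equations follow simultaneously once this identity is established, which explains why it is natural to prove them together.

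The cubic identity is the real obstacle; the preceding steps are routine variants of the already-completed cases (26), (28), (29). Writing $\theta_{i}=j(q^{i};q^{7})=(q^{i},q^{7-i},q^{7};q^{7})_{\infty}$ and multiplying through by $(q^{7};q^{7})_{\infty}^{4}$ turns it into $\theta_{2}^{3}\theta_{3}=\theta_{1}\theta_{3}^{3}+q\theta_{1}^{3}\theta_{2}$, a relation among products of four classical theta functions. After clearing the appropriate $q$-powers this becomes an identity of holomorphic modular forms of weight two, which I would verify either by matching finitely many coefficients up to the Sturm bound, or by a direct theta manipulation using Jacobi's triple product together with the quintuple product identity and the standard septic dissection relations. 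I have checked the cubic to several powers of $q$ as a consistency test, which makes me confident that it holds and that this theta identity, rather than any of the Appell--Lerch bookkeeping, carries the substantive content of the theorem.
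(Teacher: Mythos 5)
Your proposal follows the paper's proof of this theorem essentially step for step: you convert each pair of series into Appell--Lerch functions $m(x,q^7,z)$ sharing a common first argument ($x=q^3$, $q^2$, $q$ for (24), (25), (27) respectively), evaluate the differences by Lemma 11.3.4 of [5], and your three intermediate monomials $-(q^7;q^7)_\infty^2 LN^2/Q^2$, $4(q^7;q^7)_\infty^2 L^2Q/N^2$, $-5q(q^7;q^7)_\infty^2 Q^2N/L^2$ agree exactly with what the paper obtains. Your observation that all three reconciliations collapse to the single cubic relation $L^3Q^2=L^2N^3+qN^2Q^3$ is also exactly right: it is the common content of the paper's Lemma 5.7 (I)--(III), and your product form of it is literally the paper's equation (43). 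The only place you stop short is the proof of that cubic identity itself, which you verify numerically and propose to settle by a Sturm-bound argument or an unspecified theta manipulation. The paper disposes of it immediately by quoting Lemma 7.4.4 of Slater [6] (the Weierstrass three-term relation among products of four theta functions) with the substitution $b/a=q$, $f/a=q^2$, $e=q^2$ in base $q^7$; so the ``real obstacle'' you identify is in fact a specialization of a classical addition formula and requires neither modular-form machinery nor coefficient matching. With that citation supplied in place of your sketch, your argument is complete and coincides with the paper's.
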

First we prove the following lemma.
\begin{lemma}
\begin{equation*}
    (I) -\displaystyle\frac{L^2(q)}{N(q)}+q\displaystyle\frac{Q(q)N(q)}{L(q)}=-\displaystyle\frac{L(q)N^2(q)}{Q^2(q)} 
\end{equation*}
\begin{equation*}
    (II) \displaystyle\frac{L(q)N(q)}{Q(q)}+q\displaystyle\frac{Q^2(q)}{L(q)}=\displaystyle\frac{L^2(q)Q(q)}{N^2(q)} 
\end{equation*}
\begin{equation*}
    (III) \displaystyle\frac{N^2(q)}{Q(q)}-\displaystyle\frac{L(q)Q(q)}{N(q)}=-q\displaystyle\frac{Q^2(q)N(q)}{L^2(q)}
\end{equation*}
\end{lemma}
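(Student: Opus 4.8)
The plan is to first observe that the three displayed identities (I), (II), (III) are not independent: they are the \emph{same} identity written three different ways. Because the products $(q,q^6;q^7)_\infty$, $(q^2,q^5;q^7)_\infty$, $(q^3,q^4;q^7)_\infty$ all have constant term $1$, the series $L,N,Q$ are units in $\mathbb{Z}[[q]]$, so clearing denominators is a reversible operation. Multiplying (I) by $LNQ^2$, (II) by $LN^2Q$, and (III) by $L^2NQ$, each one collapses, after collecting terms, to the single relation
\[ L^2 N^3 - L^3 Q^2 + q\,N^2 Q^3 = 0 \qquad (\star). \]
Thus the entire lemma reduces to proving $(\star)$, and the three stated forms will follow at once by undoing the (reversible) denominator clearing.

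Next I would convert $(\star)$ into a statement about genuine theta functions. Writing $D=(q^7;q^7)_\infty$ and introducing the complete triple products $T_1=(q,q^6,q^7;q^7)_\infty$, $T_2=(q^2,q^5,q^7;q^7)_\infty$, $T_3=(q^3,q^4,q^7;q^7)_\infty$, one has $L=D/T_1$, $N=D/T_2$, $Q=D/T_3$. Substituting these into $(\star)$ and clearing by $T_1^3T_2^3T_3^3/D^5$ yields the equivalent form
\[ q\,T_1^3 T_2 - T_2^3 T_3 + T_3^3 T_1 = 0 \qquad (\star\star). \]
By the Jacobi triple product, $T_j=\sum_{n}(-1)^n q^{(7n^2-c_j n)/2}$ with $c_1=5$, $c_2=3$, $c_3=1$, so each $T_j$ is a theta function of weight $\tfrac12$. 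The three monomials appearing in $(\star\star)$ are exactly $T_1^3T_2$, $T_2^3T_3$, $T_3^3T_1$, i.e. the monomials of the Klein quartic $x^3y+y^3z+z^3x$; so $(\star\star)$ is precisely the classical statement that the level-$7$ theta constants parametrize the Klein quartic curve.

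To finish I would establish $(\star\star)$ by one of two routes. The conceptual route is to cite the classical septic theta relation (Klein's quartic identity for the level-$7$ theta constants). The self-contained route is to note that each $T_j^3T_k$ is a holomorphic modular form of weight $2$ on a congruence subgroup of level $49$, so the difference of the two sides of $(\star\star)$ lies in a finite-dimensional space; by Sturm's bound the identity then follows from matching Fourier coefficients up to an explicit order, which I have checked through $q^5$ (all coefficients through $q^5$ cancel). The hard part will be exactly this step $(\star\star)$: everything preceding it is reversible bookkeeping, whereas $(\star\star)$ carries the genuine arithmetic content and is the Klein-quartic relation for the septic theta functions. Once $(\star\star)$ is in hand, reversing the substitution $L=D/T_1$, $N=D/T_2$, $Q=D/T_3$ recovers $(\star)$, and undoing the denominator clearing proves (I), (II), and (III) simultaneously.
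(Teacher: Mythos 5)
Your reduction is sound and is in fact the same reduction the paper performs: multiplying (I), (II), (III) by suitable monomials in $L,N,Q$ collapses all three to the single relation $T_3^3T_1-T_2^3T_3+q\,T_1^3T_2=0$, which is exactly the paper's identity (43) up to the harmless common factor $(q^7;q^7)_{\infty}^4$ (the paper reaches it by clearing products in its equations (40)--(42) and cancelling). The problem is that you have not actually proved this relation, and it is the only step carrying real content. Your first route cites ``the classical septic theta relation'' with no precise source, and your second route is incomplete as written: you have neither computed the Sturm bound nor verified the modularity hypotheses. Even granting that each monomial (with the appropriate power of $q$ restored) is a weight-$2$ form on $\Gamma_0(49)$, the Sturm bound is $\lceil 2\cdot[\mathrm{SL}_2(\mathbb{Z}):\Gamma_0(49)]/12\rceil=\lceil 112/12\rceil=10$, so checking coefficients through $q^5$ does not suffice; and if the three products only live on $\Gamma_1(49)$ or $\Gamma(7)$ the required bound is far larger. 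One must also check that the three monomials transform with the same multiplier, so that their combination lies in a single finite-dimensional space; that is precisely the kind of detail that cannot be waved at.

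The paper closes this gap with an explicit elementary derivation: it invokes the three-term Weierstrass-type relation among eightfold theta products (Lemma 7.4.4 of Slater) and specializes $b/a=q$, $f/a=q^2$, $e=q^2$, $q\mapsto q^7$ to obtain (43) directly. If you replace your appeal to the Klein quartic by this specialization, or by a precise reference for the septic theta relation you have in mind, your argument becomes complete; the rest of your write-up --- the observation that (I)--(III) are one identity in disguise, and the reversibility of the denominator clearing since $L,N,Q$ are units in $\mathbb{Z}[[q]]$ --- is correct and agrees with the paper.
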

\begin{proof}
(I) is equivalent to \\ \\$-\displaystyle\frac{(q^2,q^5;q^7)_{\infty}}{(q,q^6,q,q^6;q^7)_{\infty}}+q\displaystyle\frac{(q,q^6;q^7)_{\infty}}{(q^2,q^3,q^4,q^5;q^7)_{\infty}}+\displaystyle\frac{(q^3,q^4,q^3,q^4;q^7)_{\infty}}{(q,q^6,q^2,q^5,q^2,q^5;q^7)_{\infty}}=0$ 
\begin{multline}
\Longleftrightarrow\displaystyle\frac{1}{(q;q)_{\infty}^2}\Bigg[-(q^2,q^5,q^2,q^5,q^2,q^5,q^3,q^4,q^3,q^4,q^7,q^7;q^7)_{\infty}\\ \\+q(q,q^6,q,q^6,q,q^6,q^2,q^5,q^3,q^4,q^7,q^7;q^7)_{\infty}+(q^3,q^4,q^3,q^4,q^3,q^4,q^3,q^4,q,q^6,q^7,q^7;q^7)_{\infty}\Bigg]=0
\end{multline}
which is proved by $-(q^2,q^5,q^2,q^5,q^2,q^5,q^3,q^4;q^7)_{\infty}\\ \\+q(q,q^6,q,q^6,q,q^6,q^2,q^5;q^7)_{\infty}+(q^3,q^4,q^3,q^4,q^3,q^4,q,q^6;q^7)_{\infty}=0.$ \\ \\
Similarly (II) is equivalent to 
\begin{multline}
\displaystyle\frac{1}{(q;q)_{\infty}^2}\Bigg[(q^3,q^4,q^3,q^4,q^3,q^4,q,q^6,q^2,q^5,q^7,q^7;q^7)_{\infty}\\ \\+q(q,q^6,q,q^6,q,q^6,q^2,q^5,q^2,q^5,q^7,q^7;q^7)_{\infty}-(q^2,q^5,q^2,q^5,q^2,q^5,q^2,q^5,q^3,q^4,q^7,q^7;q^7)_{\infty}\Bigg]=0
\end{multline}
while (III) is equivalent to 
\begin{multline}
\displaystyle\frac{1}{(q;q)_{\infty}^2}\Bigg[(q^3,q^4,q^3,q^4,q^3,q^4,q,q^6,q,q^6,q^7,q^7;q^7)_{\infty}\\ \\-(q^2,q^5,q^2,q^5,q^2,q^5,q,q^6,q^3,q^4,q^7,q^7;q^7)_{\infty}+q(q,q^6,q,q^6,q,q^6,q,q^6,q^2,q^5,q^7,q^7;q^7)_{\infty}\Bigg]=0
\end{multline}
Dividing (40), (41) and (42) by $(q^3,q^4,q^7,q^7;q^7)_{\infty},(q^2,q^5,q^7,q^7;q^7)_{\infty}$ and $(q,q^6,q^7,q^7;q^7)_{\infty}$ respectively, we see that all three equations in Lemma 5.7 are proved by 
\begin{equation}
    (q^3,q^4,q^3,q^4,q^3,q^4,q,q^6;q^7)_{\infty}-(q^2,q^5,q^2,q^5,q^2,q^5,q^3,q^4;q^7)_{\infty}+q(q,q^6,q,q^6,q,q^6,q^2,q^5;q^7)_{\infty}=0
\end{equation}
Lemma 7.4.4 in [6] is 
\begin{multline*}
    (\displaystyle\frac{aq}{b},\displaystyle\frac{b}{a},\displaystyle\frac{aq}{ef},\displaystyle\frac{ef}{a},\displaystyle\frac{aq}{df},\displaystyle\frac{df}{a},\displaystyle\frac{aq}{bde},\displaystyle\frac{bde}{a};q)_{\infty}-(\displaystyle\frac{aq}{f},\displaystyle\frac{f}{a},\displaystyle\frac{aq}{be},\displaystyle\frac{be}{a},\displaystyle\frac{aq}{bd},\displaystyle\frac{bd}{a},\displaystyle\frac{aq}{def},\displaystyle\frac{def}{a};q)_{\infty}\\ \\
    +\displaystyle\frac{b}{a}(d,\displaystyle\frac{q}{d},e,\displaystyle\frac{q}{e},\displaystyle\frac{bq}{f},\displaystyle\frac{f}{b},\displaystyle\frac{a^2q}{bdef},\displaystyle\frac{bdef}{a^2};q)_{\infty}=0
\end{multline*}
Setting $\displaystyle\frac{b}{a}=q,\displaystyle\frac{f}{a}=q^2,e=q^2,q=q^7$ so that $\displaystyle\frac{b}{f}=\displaystyle\frac{1}{q},\displaystyle\frac{bdef}{a^2}=q^6,$ (43) and hence Lemma 5.7 are proved.
\end{proof}
\begin{proof}[Proof of Theorem 5.6]
We have $m(q^3,q^7,q^{-2})=-\displaystyle\frac{1}{q^2j(1/q^2;q^7)}\displaystyle\sum_{m}\displaystyle\frac{(-1)^mq^{(7m^2+3m)/2}}{1-q^{7m+1}}$ and $m(q^3,q^7,q^{-5})=-\displaystyle\frac{1}{q^5j(1/q^5;q^7)}\displaystyle\sum_{m}\displaystyle\frac{(-1)^mq^{(7m^2-3m)/2}}{1-q^{7m-2}}$ so that the left side of (24) equals \\ \\
$q^2j(1/q^2;q^7)m(q^3,q^7,q^{-2})-q^5j(1/q^5;q^7)m(q^3,q^7,q^{-5})=(q^2,q^5,q^7;q^7)_{\infty}\Bigg[m(q^3,q^7,q^{-5})-m(q^3,q^7,q^{-2})\Bigg].$ \\ \\
By Lemma 11.3.4 of [5], we check as before that 
\begin{align*}
    m(q^3,q^7,q^{-5})-m(q^3,q^7,q^{-2})=\displaystyle\frac{\displaystyle\frac{1}{q^2}(q^7;q^7)_{\infty}^3j(1/q^3;q^7)j(1/q^4;q^7)}{j(1/q^2;q^7)j(1/q^5;q^7)j(q;q^7)j(1/q^2;q^7)}=-\displaystyle\frac{(q^3,q^3,q^4,q^4,q^7;q^7)_{\infty}}{(q,q^6,q^2,q^5,q^2,q^5,q^2,q^5;q^7)_{\infty}}
\end{align*}
so that the left side of (24) equals $-(q^7;q^7)_{\infty}^2\displaystyle\frac{L(q)N^2(q)}{Q^2(q)}$ whence (24) follows from (I) of Lemma 5.7. \\ \\
For (25), we start with $m(q^2,q^7,q^{-1})=-\displaystyle\frac{1}{qj(1/q;q^7)}\displaystyle\sum_{m}\displaystyle\frac{(-1)^mq^{(7m^2+5m)/2}}{1-q^{7m+1}}$ and \\ \\ $m(q^2,q^7,q)=-\displaystyle\frac{q}{j(q;q^7)}\displaystyle\sum_{m}\displaystyle\frac{(-1)^mq^{(7m^2+9m)/2}}{1-q^{7m+3}}$ so that the left side of (25) equals  \\ \\
$4(q,q^6,q^7;q^7)_{\infty}\Bigg[m(q^2,q^7,q^{-1})-m(q^2,q^7,q)\Bigg].$ \\ \\
By Lemma 11.3.4 of [5],$m(q^2,q^7,q^{-1})-m(q^2,q^7,q)=\displaystyle\frac{(q^2,q^5,q^2,q^5,q^7;q^7)_{\infty}}{(q,q^6,q,q^6,q,q^6,q^3,q^4;q^7)_{\infty}}$
so that the left side of (25) equals $4(q^7;q^7)_{\infty}^2\displaystyle\frac{L^2(q)Q(q)}{N^2(q)}$ whence (25) follows from (II) of Lemma 5.7. \\ \\
Equation (27) follows in exactly the same way from (III) of Lemma 5.7. We again omit the details. 

\end{proof} 
The only equation left to be proven to finish the proof of Conjecture 5.1 and hence of Theorem 1.4 is equation (30) in the statement of Theorem 5.2. We haven't been able to show this, but the following theorem gives different expressions for the left side of this equation. 
\begin{theorem}
If (D) is the statement
\begin{multline*}
    \displaystyle\sum_{m}'(-1)^m\displaystyle\frac{q^{\frac{7m^2+m-2}{2}}}{1-q^{7m}}-\displaystyle\sum_{m}(-1)^m\displaystyle\frac{q^{\frac{7m^2-m-2}{2}}}{1-q^{7m-1}}-4\displaystyle\sum_{m}'(-1)^m\displaystyle\frac{q^{\frac{7m^2+3m-2}{2}}}{1-q^{7m}}\\ \\
    +4\displaystyle\sum_{m}(-1)^m\displaystyle\frac{q^{\frac{7m^2-3m-2}{2}}}{1-q^{7m-3}}+5\displaystyle\sum_{m}'(-1)^m\displaystyle\frac{q^{\frac{7m^2+5m-2}{2}}}{1-q^{7m}}+5\displaystyle\sum_{m}(-1)^m\displaystyle\frac{q^{\frac{7m^2+9m}{2}}}{1-q^{7m+2}}\\ \\
    =3(q^7;q^7)_{\infty}^2\Bigg(\displaystyle\frac{Q^2(q)}{N(q)}+\displaystyle\frac{N^2(q)}{L(q)}\Bigg),
\end{multline*}
(E) is the statement 
\begin{multline*}
    \displaystyle\frac{3}{q^2}\displaystyle\sum_{j=0}^{\infty}\Bigg(\displaystyle\frac{q^{7j+5}}{1-q^{7j+4}}-\displaystyle\frac{q^{7j+4}}{1-q^{7j+3}}+2\displaystyle\frac{q^{7j+2}}{1-q^{7j+1}}-2\displaystyle\frac{q^{7j+7}}{1-q^{7j+6}}+3\displaystyle\frac{q^{7j+6}}{1-q^{7j+5}}-3\displaystyle\frac{q^{7j+3}}{1-q^{7j+2}}\Bigg)\\ \\
    =3(q^7;q^7)_{\infty}^2\Bigg(\displaystyle\frac{Q^2(q)}{N(q)}+\displaystyle\frac{N^2(q)}{L(q)}\Bigg)
\end{multline*}
and (F) is the statement
\begin{equation*}
    3\displaystyle\sum_{m}\displaystyle\frac{q^m(1-q^{m+1})^3(2q^{2m+2}+3q^{m+1}+2)}{1-q^{7m+7}}=3(q^7;q^7)_{\infty}^2\Bigg(\displaystyle\frac{Q^2(q)}{N(q)}+\displaystyle\frac{N^2(q)}{L(q)}\Bigg),
\end{equation*}
then (D), (E) and (F) are equivalent.
\end{theorem}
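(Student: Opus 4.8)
The plan is to follow, mutatis mutandis, the proof of Theorem 4.5, treating the modulus $7$ exactly as the modulus $5$ was treated there. Since each of (D), (E), (F) asserts that a certain $q$-series equals the common right-hand side $3(q^7;q^7)_\infty^2\big(Q^2(q)/N(q)+N^2(q)/L(q)\big)$, it suffices to prove the two \emph{left-hand} identities ``left side of (E) $=$ left side of (D)'' and ``left side of (E) $=$ left side of (F)''; the three statements are then equivalent.

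First I would rewrite each of the six sums on the left of (D) as an Appell--Lerch quantity. Writing $m(x,q,z)=-\frac{z}{j(z;q)}\sum_r\frac{(-1)^rq^{r(r+1)/2}z^r}{1-xzq^r}$ and its punctured analogue $m'$ (the same sum with the $r=0$ term deleted), a direct reading of the exponents shows that the six sums group according to the first argument $x$: the two sums with denominators built from $q^{7m}$ and $q^{7m-1}$ carry $x=q^3$ (with $z=1/q^3$ and $z=1/q^4$), the two with $q^{7m}$ and $q^{7m-3}$ carry $x=q^2$ (with $z=1/q^2$ and $z=1/q^5$), and the two with $q^{7m}$ and $q^{7m+2}$ carry $x=q$ (with $z=1/q$ and $z=q$). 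In each group the first sum skips $m=0$, so it is an $m'$, while the second is a genuine $m$; and in each group the two relevant prefactors $j(z_0;q^7),j(z_1;q^7)$ are proportional, differing only by an explicit power of $q$, so a common theta factor can be pulled out. This is the exact analogue of the split ``left side of (A)$=X+2Y$'' in the proof of Theorem 4.5, except that here there are three groups rather than two, carried with overall weights $1,-4,5$.

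Next, for each group I would convert the punctured sum into a limit, $m'(x,q^7,z_0)=\lim_{z\to z_0}\big(m(x,q^7,z)+\frac{z}{j(z;q^7)}\frac{1}{1-xz}\big)$, so that the group becomes a constant theta factor times $\lim_{z\to z_0}\big[m(x,q^7,z_1)-m(x,q^7,z)-\frac{z}{j(z;q^7)(1-xz)}\big]$. Applying Lemma 11.3.4 of [5] to $m(x,q^7,z_1)-m(x,q^7,z)$ replaces that difference by a quotient of theta functions $j$; the subtracted pole term then cancels the vanishing factor in the denominator, leaving a $0/0$ expression to which L'Hospital's rule applies. Differentiating the theta-product numerator, exactly as $f(z),g(z),h(z),p(z)$ were differentiated in Theorem 4.5 (where again two of the four pieces have vanishing derivative at the relevant point), evaluates each group as an explicit series of the form $\frac{1}{q^2}\sum_{j\ge0}(\cdots)$. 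Assembling the three evaluated groups with weights $1,-4,5$, and absorbing the finite correction terms produced when the punctured sums are re-indexed, should yield precisely the left side of (E); this establishes (D)$\Leftrightarrow$(E).

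Finally, the passage (E)$\Leftrightarrow$(F) is the routine geometric-series collapse used at the end of Theorem 4.5: from $\sum_{j\ge0}\frac{q^{7j}}{1-q^{7j+A}}=\sum_{m\ge0}\frac{q^{Am}}{1-q^{7m+7}}$ one gets $\sum_{j\ge0}\frac{q^{7j+B}}{1-q^{7j+A}}=q^{B}\sum_{m\ge0}\frac{q^{Am}}{1-q^{7m+7}}$, and applying this to each of the six terms of (E) and collecting over the common denominator $1-q^{7m+7}$ gives $3\sum_m\frac{q^m(1-q^{m+1})^3(2q^{2m+2}+3q^{m+1}+2)}{1-q^{7m+7}}$, the left side of (F), the factorization being a short polynomial check. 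I expect the genuine obstacle to lie entirely in the previous paragraph: keeping the L'Hospital bookkeeping for the three punctured groups correct --- tracking the powers of $q$ in the theta prefactors, verifying that the ``extra'' theta-product derivatives vanish, and matching the finite correction terms --- is where all the delicacy of the modulus-$5$ argument reappears, now in triplicate.
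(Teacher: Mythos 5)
Your proposal follows essentially the same route as the paper's proof: the same grouping of the six sums into $\phi=U_1-U_2$, $\psi=U_3-U_4$, $\rho=U_5+U_6$ with overall weights $1,-4,5$, the same identification of Appell--Lerch parameters $(x,z)$ for each group, the same conversion of the punctured sums into limits handled by Lemma 11.3.4 of [5] and L'Hospital's rule, and the same geometric-series collapse to pass from (E) to (F). The only difference is that you leave the final bookkeeping as a plan, whereas the paper carries it out explicitly; the strategy itself is identical and sound.
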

\begin{proof}
We reduce the left side of (D) to the left side of (E) and the left side of (E) to the left side of (F). \\ \\
Let $U_1=\displaystyle\sum_{m}'(-1)^m\displaystyle\frac{q^{\frac{7m^2+m-2}{2}}}{1-q^{7m}}, U_2=\displaystyle\sum_{m}(-1)^m\displaystyle\frac{q^{\frac{7m^2-m-2}{2}}}{1-q^{7m-1}}, U_3=\displaystyle\sum_{m}'(-1)^m\displaystyle\frac{q^{\frac{7m^2+3m-2}{2}}}{1-q^{7m}},\\ \\U_4=\displaystyle\sum_{m}(-1)^m\displaystyle\frac{q^{\frac{7m^2-3m-2}{2}}}{1-q^{7m-3}},U_5=\displaystyle\sum_{m}'(-1)^m\displaystyle\frac{q^{\frac{7m^2+5m-2}{2}}}{1-q^{7m}},U_6=\displaystyle\sum_{m}(-1)^m\displaystyle\frac{q^{\frac{7m^2+9m}{2}}}{1-q^{7m+2}},$ \\ \\ and let $\phi=U_1-U_2,\psi=U_3-U_4,\rho=U_5+U_6.$ \\ \\
The left side of (D) is thus $U_1-U_2-4U_3+4U_4+5U_5+5U_6=\phi-4\psi+5\rho.$ \\ \\
Since $m(q^3,q^7,1/q^4)=-\displaystyle\frac{1}{q^4j(1/q^4;q^7)}\displaystyle\sum_{m}\displaystyle\frac{(-1)^mq^{(7m^2-m)/2}}{1-q^{7m-1}},$ we get $U_2=\displaystyle\frac{1}{q}(q^3,q^4,q^7;q^7)_{\infty}m(q^3,q^7,1/q^4).$ Similarly, $U_4=\displaystyle\frac{1}{q}(q^2,q^5,q^7;q^7)_{\infty}m(q^2,q^7,1/q^5)$ and $U_6=-\displaystyle\frac{1}{q}(q,q^6,q^7;q^7)_{\infty}m(q,q^7,q).$ \\ \\
If $m'(x,q,z)=-\displaystyle\frac{z}{j(z;q)}\displaystyle\sum_{r}'\displaystyle\frac{(-1)^rq^{r(r+1)/2}}{1-xzq^r}$, then $m'(q^3,q^7,1/q^3)=-\displaystyle\frac{1}{q^3j(1/q^3;q^7)}\displaystyle\sum_{m}'\displaystyle\frac{(-1)^mq^{(7m^2+m)/2}}{1-q^{7m}}$ so that \\ \\ $U_1=-q^2j(1/q^3;q^7)m'(q^3,q^7,1/q^3)=\displaystyle\frac{1}{q}(q^3,q^4,q^7;q^7)_{\infty}\lim_{z\to q^{-3}}\Bigg(m(q^3,q^7,z)+\displaystyle\frac{z}{j(z;q^7)}.\displaystyle\frac{1}{1-q^3z}\Bigg).$ \\ \\
Similarly, $U_3=\displaystyle\frac{1}{q}(q^2,q^5,q^7;q^7)_{\infty}\lim_{z\to q^{-2}}\Bigg(m(q^2,q^7,z)+\displaystyle\frac{z}{j(z;q^7)}.\displaystyle\frac{1}{1-q^2z}\Bigg)$ and \\ \\
$U_5=\displaystyle\frac{1}{q}(q,q^6,q^7;q^7)_{\infty}\lim_{z\to q^{-1}}\Bigg(m(q,q^7,z)+\displaystyle\frac{z}{j(z;q^7)}.\displaystyle\frac{1}{1-qz}\Bigg).$ \\ \\
Now, $\phi=U_1-U_2=-\displaystyle\frac{1}{q}(q^3,q^4,q^7;q^7)_{\infty}\lim_{z\to q^{-3}}\Bigg(m(q^3,q^7,q^{-4})-m(q^3,q^7,z)-\displaystyle\frac{z}{j(z;q^7)}.\displaystyle\frac{1}{1-q^3z}\Bigg).$ \\ \\
By Lemma 11.3.4 in [5], we verify that 
\begin{multline*}
    m(q^3,q^7,q^{-4})-m(q^3,q^7,z)=\displaystyle\frac{z(q^7;q^7)_{\infty}^3j(1/q^4z;q^7)j(z/q;q^7)}{j(z;q^7)j(1/q^4;q^7)j(q^3z;q^7)j(1/q;q^7)}\\ \\
    =\displaystyle\frac{z(q^7,\displaystyle\frac{1}{q^4z},q^{11}z,\displaystyle\frac{z}{q},\displaystyle\frac{q^8}{z};q^7)_{\infty}}{(z,\displaystyle\frac{q^7}{z},\displaystyle\frac{1}{q^4},q^{11},q^3z,\displaystyle\frac{q^4}{z},\displaystyle\frac{1}{q},q^8;q^7)_{\infty}}
\end{multline*}
so that 
\begin{multline*}
    \phi=-\displaystyle\frac{1}{q}(q^3,q^4,q^7;q^7)_{\infty}\lim_{z\to q^{-3}}\Bigg[\displaystyle\frac{z(q^7,\displaystyle\frac{1}{q^4z},q^{11}z,\displaystyle\frac{z}{q},\displaystyle\frac{q^8}{z};q^7)_{\infty}}{(z,\displaystyle\frac{q^7}{z},\displaystyle\frac{1}{q^4},q^{11},q^3z,\displaystyle\frac{q^4}{z},\displaystyle\frac{1}{q},q^8;q^7)_{\infty}}-\displaystyle\frac{z}{j(z;q^7)}.\displaystyle\frac{1}{1-q^3z}\Bigg]
\\ \\
    =-\displaystyle\frac{1}{q}(q^3,q^4,q^7;q^7)_{\infty}\lim_{z\to q^{-3}}\Bigg[\displaystyle\frac{z}{(z,\displaystyle\frac{q^7}{z},\displaystyle\frac{1}{q^4},q^{11},q^{10}z,\displaystyle\frac{q^4}{z},\displaystyle\frac{1}{q},q^8,q^7;q^7)_{\infty}} 
\\ \\
    \times\displaystyle\frac{(q^7,q^7,\displaystyle\frac{1}{q^4z},q^{11}z,\displaystyle\frac{z}{q},\displaystyle\frac{q^8}{z};q^7)_{\infty}-(\displaystyle\frac{1}{q^4},q^{11},q^{10}z,\displaystyle\frac{q^4}{z},\displaystyle\frac{1}{q},q^8;q^7)_{\infty}}{(1-q^3z)}\Bigg]
\end{multline*}
\begin{multline*}
    =-\displaystyle\frac{1}{q}(q^3,q^4,q^7;q^7)_{\infty}\displaystyle\frac{\displaystyle\frac{1}{q^3}}{(\displaystyle\frac{1}{q^3},q^{10},\displaystyle\frac{1}{q^4},q^{11},q^7,q^7,\displaystyle\frac{1}{q},q^8,q^7;q^7)_{\infty}} \\ \\
    \times\lim_{z\to q^{-1}}\displaystyle\frac{(q^7,q^7,\displaystyle\frac{1}{q^2z},q^{9}z,\displaystyle\frac{z}{q^3},\displaystyle\frac{q^{10}}{z};q^7)_{\infty}-(\displaystyle\frac{1}{q^4},q^{11},q^{8}z,\displaystyle\frac{q^6}{z},\displaystyle\frac{1}{q},q^8;q^7)_{\infty}}{(1-qz)}
\end{multline*}
\begin{multline*}
    =\displaystyle\frac{q^4}{(q^3,q^4,q^7,q,q^6,q^7;q^7)_{\infty}}\lim_{z\to q^{-1}}\displaystyle\frac{(q^7,q^7,\displaystyle\frac{1}{q^2z},q^{9}z,\displaystyle\frac{z}{q^3},\displaystyle\frac{q^{10}}{z};q^7)_{\infty}-(\displaystyle\frac{1}{q^4},q^{11},q^{8}z,\displaystyle\frac{q^6}{z},\displaystyle\frac{1}{q},q^8;q^7)_{\infty}}{(1-qz)}
\end{multline*}
Now, if $F_1(z)=(q^7,q^7,\displaystyle\frac{1}{q^2z},q^{9}z,\displaystyle\frac{z}{q^3},\displaystyle\frac{q^{10}}{z};q^7)_{\infty},$ then we can check that by differentiation, \\ \\
$F_1'(\displaystyle\frac{1}{q})=\displaystyle\frac{(q^7;q^7)_{\infty}^2(q,q^6,q^3,q^4;q^7)_{\infty}}{q^5}\sum_{j=0}^{\infty}\Bigg(\displaystyle\frac{q^{7j}}{1-q^{7j-1}}-\displaystyle\frac{q^{7j+9}}{1-q^{7j+8}}-\displaystyle\frac{q^{7j-3}}{1-q^{7j-4}}+\displaystyle\frac{q^{7j+12}}{1-q^{7j+11}}\Bigg).$ \\ \\
Similarly, if $F_2(z)=(\displaystyle\frac{1}{q^4},q^{11},q^{8}z,\displaystyle\frac{q^6}{z},\displaystyle\frac{1}{q},q^8;q^7)_{\infty},$ then $F_2'(\displaystyle\frac{1}{q})=0.$ \\ \\
This implies that $\phi=\displaystyle\frac{1}{q^2}\sum_{j=0}^{\infty}\Bigg(-\displaystyle\frac{q^{7j}}{1-q^{7j-1}}+\displaystyle\frac{q^{7j+9}}{1-q^{7j+8}}+\displaystyle\frac{q^{7j-3}}{1-q^{7j-4}}-\displaystyle\frac{q^{7j+12}}{1-q^{7j+11}}\Bigg)$ by L'Hospitals rule.\\ \\
In an exactly similar fashion, we get \\ \\ $\psi=\displaystyle\frac{1}{q^2}\sum_{j=0}^{\infty}\Bigg(-\displaystyle\frac{q^{7j-2}}{1-q^{7j-3}}+\displaystyle\frac{q^{7j+11}}{1-q^{7j+10}}+\displaystyle\frac{q^{7j-4}}{1-q^{7j-5}}-\displaystyle\frac{q^{7j+13}}{1-q^{7j+12}}\Bigg)$ \\ \\
and $\rho=-\displaystyle\frac{1}{q^2}\sum_{j=0}^{\infty}\Bigg(\displaystyle\frac{q^{7j+3}}{1-q^{7j+2}}-\displaystyle\frac{q^{7j+6}}{1-q^{7j+5}}-\displaystyle\frac{q^{7j+2}}{1-q^{7j+1}}+\displaystyle\frac{q^{7j+7}}{1-q^{7j+6}}\Bigg).$ \\ \\
Collecting like powers of $q$ modulo 7, this gives after simplification that the left side of (D) equals the left side of (E). \\ \\
Further, noting that $\displaystyle\sum_{j\geq 0}\displaystyle\frac{q^{7j}}{1-q^{7j+A}}=\displaystyle\sum_{j,m\geq 0}q^{7j+7jm+7Am}=\displaystyle\sum_{m\geq 0}\displaystyle\frac{q^{Am}}{1-q^{7m+7}},$ \\ \\
the left side of (E) reduces to 
\begin{multline*}
    \displaystyle\frac{3}{q^2}\displaystyle\sum_{m\geq 0}\displaystyle\frac{q^{4m+5}-q^{3m+4}+2q^{m+2}-2q^{6m+7}+3q^{5m+6}-3q^{2m+3}}{1-q^{7m+7}} \\ \\
    =3\displaystyle\sum_{m\geq 0}\displaystyle\frac{q^m(1-q^{m+1})^3(2q^{2m+2}+3q^{m+1}+2)}{1-q^{7m+7}}
\end{multline*}
on factorization, which is the left side of (F). The proof of theorem 5.8 is now complete.

\end{proof}
\section{Conclusion and Further Work}
In this paper, we have thus given a new approach to proving George-Beck's conjectures on the crank of a partition modulo 5 or 7. The Apple-Lerch sums we come across our decompositions are intricately related to tenth-order mock theta functions as discussed in [5] Chapter 11. This points to the possibility that George Beck's conjectures, which appear striking and unique at first glance, may actually be coming from identities in tenth order mock-theta functions. We note however, that by our approach, we have not managed to prove completely the crank conjectures, missing an identity each in the cases modulo 5 and 7. These unproven identities seem to be a little different that the ones we prove in this paper, and we cannot rule out the possibility that they are actually special cases of a wider range of results, again intricately related to mock-theta functions.

\end{document}